\titlespacing{\paragraph}{0pt}{1em}{.7em}
\declaretheorem[style=plain,parent=section,title=Theorem,refname={Theorem,Theorems}]{theo}
\declaretheorem[style=plain,sibling=theo,title=Proposition,refname={Proposition,Propositions}]{prop}
\declaretheorem[style=plain,sibling=theo,title=Corollary,refname={Corollary,Corollaries}]{cor}
\declaretheorem[style=definition,sibling=theo,title=Definition,refname={Definition,Definitions}]{defin}
\declaretheorem[style=remark,sibling=theo,title=Remark,refname={Remark,Remarks}]{rem}
\newlist{Hassum}{enumerate}{1}
\setlist[Hassum]{label=\textbf{(H\arabic*)},ref=\textnormal{\textbf{(H\arabic*)}},font=\normalfont}
\newlist{myenum}{enumerate}{1}
\setlist[myenum]{label=\textbf{\roman*)},ref=\textnormal{\textbf{(\roman*)}},font=\normalfont}
\newlist{mylist}{itemize}{1}
\setlist[mylist]{label=\textbullet,font=\normalfont}
\crefname{equation}{}{}
\crefname{Hassumi}{condition}{conditions}
\crefname{line}{line}{lines}
\DeclareMathOperator{\diam}{diam}
\DeclareMathOperator*{\amax}{arg\,max}
\DeclareMathOperator*{\amin}{arg\,min}
\newcommand{\Acal}{\mathcal{A}}
\newcommand{\wtAcal}{\widetilde{\Acal}}
\newcommand{\Drm}{\mathrm{D}}
\newcommand{\Ecal}{\mathcal{E}}
\newcommand{\wtH}{\widetilde{H}}
\newcommand{\Hcal}{\mathcal{H}}
\newcommand{\wtHcal}{\widetilde{\Hcal}}
\newcommand{\Mcal}{\mathcal{M}}
\newcommand{\Nds}{\mathds{N}}
\newcommand{\Rds}{\mathds{R}}
\newcommand{\Scal}{\mathcal{S}}
\newcommand{\Sscr}{\mathscr{S}}
\newcommand{\wtS}{\widetilde{S}}
\newcommand{\Tds}{\mathds{T}}
\newcommand{\Tscr}{\mathscr{T}}
\newcommand{\Vbf}{\mathbf{V}}
\newcommand{\ovc}{\overline{c}}
\newcommand{\unc}{\underline{c}}
\newcommand{\utc}{\undertilde{c}}
\newcommand{\wtc}{\widetilde{c}}
\newcommand{\drm}{\mathrm{d}}
\newcommand{\ovw}{\overline{w}}
\newcommand{\unw}{\underline{w}}
\newcommand{\ovx}{\overline{x}}
\newcommand{\ovy}{\overline{y}}
\newcommand{\wtgamma}{\widetilde{\gamma}}
\newcommand{\ovphi}{\overline{\phi}}
\newcommand{\unphi}{\underline{\phi}}
\newcommand{\myemail}[2][]{\textsuperscript{#1}\href{mailto:#2}{\texttt{#2}}}
\newcommand{\hrefc}{\textup{(}\hyperref[eq:globeik]{\ensuremath{\Hcal}\textup{J}\ensuremath{c}}\textup{)}}
\title{Numerical Approximation of the Critical Value of Eikonal Hamilton--Jacobi Equations on Networks}
\author[1]{Valentina Coscetti}
\author[2]{Marco Pozza}
\affil[1]{Sapienza, University of Rome, Italy. \textit{Email address:} \myemail{valentina.coscetti@uniroma1.it}}
\affil[2]{Link Campus University, Rome, Italy. \textit{Email address:} \myemail{m.pozza@unilink.it}}
\date{}
\begin{document}

    \maketitle

    \begin{abstract}
        The \emph{critical value} of an eikonal equation is the unique value of a parameter for which the equation admits solutions and is deeply related to the effective Hamiltonian of a corresponding homogenization problem. We study approximation strategies for the critical value of eikonal equations posed on networks. They are based on the large time behavior of corresponding time-dependent Hamilton--Jacobi equations. We provide error estimates and some numerical tests, showing the performance and the convergence properties of the proposed algorithms.
    \end{abstract}

    \paragraph{2020 Mathematics Subject Classification:} 35R02, 65M15, 49L25, 35B40.

    \paragraph{Keywords:} Hamilton--Jacobi equations, numerical approximation, Mañé critical value, embedded networks, large time behavior, effective Hamiltonian.

    \section{Introduction}

    Given a Hamiltonian $H:\Tds^N\times\Rds^N\to\Rds$, where $\Tds^N$ is the $N$-dimensional torus, the \emph{Mañé critical value} of the eikonal equation defined by $H$ is the unique value $c$ such that
    \begin{equation*}
        H(x,\Drm u)=c,\qquad x\in\Tds^N,
    \end{equation*}
    admits solutions. The critical value is also deeply connected to the homogenization of Hamilton--Jacobi equations, which starting from the seminal paper~\cite{LionsPapanicolaouVaradhan87} has received a considerable interest both from theoretical and from an applied viewpoint, see for example~\cite{Capuzzo-DolcettaIshii01,MajdaSouganidis94,ContrerasIturriagaSiconolfi14,Evans92,NamahRoquejoffre00}. It then comes as no surprise that many works, such as~\cite{GomesOberman04,AchdouCamilliCapuzzo-Dolcetta08,CamilliCapuzzoDolcettaGomes07,CacaceCamilli16}, are devoted to the approximation of the critical value for problems posed on $\Tds^N$.

    The aim of this paper is to approximate the critical value of eikonal equations posed on \emph{networks}. With network we mean a connected finite graph $\Gamma$ embedded in $\Rds^N$ with vertices linked by regular simple curves $\gamma$, called arcs of the network. A Hamiltonian on $\Gamma$ is a collection of Hamiltonians $\{H_\gamma\}$ indexed by the arcs, depending on state and momentum variables, with the crucial feature that Hamiltonians associated to arcs possessing different support are totally unrelated. Over the last years there has been an increasing interest in the study of this type of problem, leading to many valuable contributions. Among these we cite~\cite{BarlesChasseigne24,CamilliSchieborn12,ImbertMonneau17,ImbertMonneauZidani12,LionsSouganidis17,AchdouTchou15}.

    The eikonal problem on $\Gamma$, as defined in~\cite{SiconolfiSorrentino18}, is to find a continuous function $u:\Gamma\to\Rds$ such that $u\circ\gamma$ is a viscosity solution to
    \begin{equation}\label{eq:introeik}
        H_\gamma(s,\partial U)=c,
    \end{equation}
    for any arc $\gamma$, where $c\in\Rds$ is a fixed parameter. As in the torus case, the critical value of the eikonal equation is the unique value of the parameter $c$ for which~\eqref{eq:introeik} admits solutions and is deeply related to the so-called \emph{effective Hamiltonian} of a corresponding homogenization problem, see~\cite{PozzaSiconolfiSorrentino24}.

    In general, \cref{eq:introeik} cannot be solved explicitly. Moreover, to design a numerical scheme for such an equation is a very difficult task since it requires the approximation of a first order Hamilton--Jacobi problem in which both the solution $u$ and the critical value $c$ are unknown. Indeed, there are few papers devoted to the numerical approximation of solutions to Hamilton--Jacobi equations on networks and most of them are focused on time-dependent problems as described in~\cite{ImbertMonneau17,Siconolfi22}, which are, from the numerical point of view, simpler than the eikonal one. Let us mention the finite difference based methods outlined in~\cite{CamilliCarliniMarchi18,CostesequeLebacqueMonneau14} and the semi-Lagrangian schemes in~\cite{CarliniFestaForcadel20,CarliniSiconolfi23,CamilliFestaSchieborn13}.

    Recently, in~\cite{Pozza25_2}, it has been shown that the large time behavior of solutions to time-dependent Hamilton--Jacobi equations can be exploited to obtain solutions to a corresponding eikonal problem. Accordingly, our idea is to use the large time behavior of such solutions to approximate the critical value.

    We provide two numerical schemes for the approximation of the critical value. The first one, \cref{basicalgo}, uses an approach already employed in~\cite{AchdouCamilliCapuzzo-Dolcetta08} for problems posed on $\Tds^N$ and allows an a priori error estimate. For \cref{iteralgo} we provide a convergence theorem and our simulations show that it usually achieves the same results of \cref{basicalgo} with fewer steps.

    The paper is organized as follows: \cref{netsec} provides some basic facts about networks and Hamiltonians defined on them, and gives our main assumptions. In \cref{HJsec} we introduce the time-dependent equation and its large time behavior, which will provide the theoretical basis for our algorithms. \Cref{algosec} is about the analysis of our approximation strategies for the critical value. Finally, in \cref{simsec}, we present the numerical simulations of our algorithms. \Cref{ltbexp} is devoted to the proof of a \namecref{ltb} which extends the large time behavior analysis performed in~\cite{Pozza25_2} to our framework.

    \paragraph{Acknowledgments.} The first author was partially supported by Italian Ministry of Instruction, University and Research (MIUR) (PRIN Project2022238YY5, ``Optimal control problems: analysis, approximation'') by INdAM–GNCS Project (CUP$\_$ E53C24001950001, ``Metodi avanzati per problemi di Mean Field Games ed applicazioni''), by Avvio alla Ricerca (CUP$\_$ B83C24006550001, ``Numerical schemes for Hamilton-Jacobi equations and Mean Field Games problems'') and by European Union - Next Generation EU, Missione 4, Componente 1, CUP$\_$ B53C23002010006. The second author is a member of the INdAM research group GNAMPA.

    \section{Networks}\label{netsec}

    Here we describe our framework, namely what is a network, a Hamiltonian defined on it and some related concepts useful for our study.

    \subsection{Basic Definitions}

    We fix a dimension $N$ and $\Rds^N$ as ambient space. An \emph{embedded network}, or \emph{continuous graph}, is a subset $\Gamma\subset\Rds^N$ of the form
    \begin{equation*}
        \Gamma=\bigcup_{\gamma\in\Ecal}\gamma([0,|\gamma|])\subset\Rds^N,
    \end{equation*}
    where $\Ecal$ is a finite collection of regular (i.e., $C^1$ with non-vanishing derivative) simple oriented curves, called \emph{arcs} of the network, with Euclidean length $|\gamma|$ and parameterized by arc length in $[0,|\gamma|]$ (i.e., $|\dot\gamma|\equiv1$ for any $\gamma\in\Ecal$). Note that we are also assuming existence of one-sided derivatives at the endpoints 0 and $|\gamma|$. We stress out that a regular change of parameters does not affect our results.

    On the support of any arc $\gamma$, we also consider the inverse parametrization defined as
    \begin{equation*}
        \widetilde\gamma(s)\coloneqq\gamma(|\gamma|-s),\qquad \text{for }s\in[0,|\gamma|].
    \end{equation*}
    We call $\widetilde\gamma$ the \emph{inverse arc} of $\gamma$. We assume
    \begin{equation}\label{eq:nosovrap}
        \gamma((0,|\gamma|))\cap\gamma'\mleft(\mleft[0,\mleft|\gamma'\mright|\mright]\mright)=\emptyset,\qquad \text{whenever }\gamma'\ne\gamma,\widetilde\gamma.
    \end{equation}

    We call \emph{vertices} the initial and terminal points of the arcs, and denote by $\Vbf$ the sets of all such vertices. It follows from~\eqref{eq:nosovrap} that vertices are the only points where arcs with different support intersect and, in particular,
    \begin{equation*}
        \gamma((0,|\gamma|))\cap\Vbf=\emptyset,\qquad \text{for any }\gamma\in\Ecal.
    \end{equation*}

    We set, for each $x\in\Vbf$,
    \begin{equation*}
        \Gamma_x\coloneqq\{\gamma\in\Ecal:\gamma(0)=x\}.
    \end{equation*}

    We assume that the network is connected, namely given two vertices there is a finite concatenation of arcs linking them. A \emph{loop} is an arc with initial and final point coinciding. The unique restriction we require on the geometry of $\Gamma$ is
    \begin{mylist}
        \item $\Ecal$ does not contain loops.
    \end{mylist}
    This condition is due to the fact that in the known literature about time-dependent Hamilton--Jacobi equations on networks no loops are admitted, see, e.g., \cite{ImbertMonneau17,Siconolfi22,LionsSouganidis17}. We believe that our approach, the same used in~\cite{Siconolfi22}, can also include the presence of loops, but this should require nontrivial adjustments like the addition of a periodic condition on loops.

    The network $\Gamma$ inherits a geodesic distance, denoted with $d_\Gamma$, from the Euclidean metric of $\Rds^N$. It is clear that given $x$, $y$ in $\Gamma$ there is at least a geodesic linking them. The geodesic distance is in addition equivalent to the Euclidean one. We will denote with $\diam(\Gamma)$ the diameter of $\Gamma$ with respect to the geodesic distance.

    We also consider a differential structure on the network by defining the \emph{tangent bundle} of $\Gamma$, $T\Gamma$ in symbols, as the set made up by the $(x,q)\in\Gamma\times\Rds^N$ with $q$ of the form
    \begin{equation*}
        q=\lambda\dot\gamma(s),\qquad \text{if $x=\gamma(s)$, $s\in[0,|\gamma|]$, with $\lambda\in\Rds$}.
    \end{equation*}
    Note that $\dot\gamma(s)$ is univocally determined, up to a sign, if $x\in\Gamma\setminus\Vbf$ or in other words if $s\notin\{0,|\gamma|\}$.

    For the sake of simplicity, by curve we mean throughout the paper an \emph{absolutely continuous} curve. We point out that the pair $\mleft(\xi,\dot\xi\mright)$, where $\xi$ is a curve in $\Gamma$, is naturally contained in $T\Gamma$.

    \subsection{Hamiltonians on \texorpdfstring{$\Gamma$}{Γ}}

    A Hamiltonian on $\Gamma$ is a collection of Hamiltonians $\Hcal\coloneqq\{H_\gamma\}_{\gamma\in\Ecal}$, where
    \begin{alignat*}{2}
        H_\gamma:[0,|\gamma|]\times\Rds&&\:\longrightarrow\:&\Rds\\
        (s,\mu)&&\:\longmapsto\:&H_\gamma(s,\mu),
    \end{alignat*}
    satisfying
    \begin{equation*}
        H_{\wtgamma}(s,\mu)=H_\gamma(|\gamma|-s,-\mu),\qquad \text{for any arc }\gamma.
    \end{equation*}
    We emphasize that, apart the above compatibility condition, the Hamiltonians $H_\gamma$ are \emph{unrelated}.

    We require each $H_\gamma$ to be:
    \begin{Hassum}
        \item\label{condcont} continuous in both arguments;
        \item Lipschitz continuous in $s$ for any $\mu\in\Rds$;
        \item\label{condsuplin} superlinearly coercive in the momentum variable, uniformly in $s$, i.e.,
        \begin{equation*}
            \lim_{|\mu|\to\infty}\inf_{s\in [0,|\gamma|]}\frac{H_\gamma(s,\mu)}{|\mu|}=\infty,\qquad \text{for any $\gamma\in\Ecal$};
        \end{equation*}
        \item\label{condconv} convex in $\mu$;
        \item\label{condqconv} strictly quasiconvex in $\mu$, which means that, for any $s\in[0,|\gamma|]$, $\mu,\mu'\in\Rds$ and $\rho\in(0,1)$,
        \begin{equation*}
            H_\gamma\mleft(s,\rho\mu+(1-\rho)\mu'\mright)<\max\mleft\{H_\gamma(s,\mu),H_\gamma\mleft(s,\mu'\mright)\mright\}.
        \end{equation*}
    \end{Hassum}
    Note that if $H_\gamma$ is strictly convex in the momentum variable, then it satisfies both \labelcref{condconv,condqconv}.\\
    Under \cref{condcont,condsuplin,condconv} it is natural to define, for any $\gamma\in\Ecal$, the \emph{Lagrangian} corresponding to $H_\gamma$ as
    \begin{equation*}
        L_\gamma(s,\lambda)\coloneqq\sup_{\mu\in\Rds}(\lambda\mu-H_\gamma(s,\mu)),
    \end{equation*}
    where the supremum is actually achieved thanks to~\ref{condsuplin}. We have, for each $\lambda\in\Rds$ and $s\in[0,|\gamma|]$,
    \begin{equation}\label{eq:lagcomp}
        L_\gamma(s,\lambda)=L_{\widetilde\gamma}(|\gamma|-s,-\lambda).
    \end{equation}
    Moreover, the Lagrangians $L_\gamma$ are Lipschitz continuous in the first variable, and convex and superlinear in the second one. We will define later on a Lagrangian defined on the whole network, assuming suitable gluing conditions on the $L_\gamma$.

    \section{Hamilton--Jacobi Equations on Networks}\label{HJsec}

    In this \lcnamecref{HJsec} we briefly recall some results about Hamilton--Jacobi equations posed on networks which will form the theoretical basis of our algorithms.

    \subsection{Time-Dependent HJ Equations}

    We focus on the time-dependent problem, thoroughly analyzed in~\cite{Siconolfi22,ImbertMonneau17}, of the form
    \begin{equation}\label{eq:globteik}\tag{\ensuremath{\Hcal}JE}
        \partial_t v(x,t)+\Hcal(x,\Drm_x v)=0,\qquad \text{on }\Gamma\times(0,\infty).
    \end{equation}
    This notation synthetically indicates the family (for $\gamma$ varying in $\Ecal$) of Hamilton--Jacobi equations
    \begin{equation}\label{eq:teikg}\tag{HJ\textsubscript{\ensuremath{\gamma}}E}
        \partial_t U(s,t)+H_\gamma(s,\partial_s U(s,t))=0,\qquad \text{on }(0,|\gamma|)\times(0,\infty).
    \end{equation}
    It has been established that to get existence and uniqueness of solutions the equations~\cref{eq:teikg} must be coupled with an additional condition on the one-dimensional interfaces
    \begin{equation*}
        \mleft\{(x,t):t\in\Rds^+\mright\}\qquad \text{with $x\in\Vbf$}.
    \end{equation*}
    Following~\cite{ImbertMonneau17}, we set
    \begin{equation*}
        a_\gamma\coloneqq\max_{s\in[0,|\gamma|]}\min_{\mu\in\Rds}H_\gamma(s,\mu),\quad \text{for each }\gamma\in\Ecal,
    \end{equation*}
    and call \emph{flux limiter} any function $x\mapsto c_x$ from $\Vbf$ to $\Rds$ satisfying
    \begin{equation*}
        c_x\ge\max_{\gamma\in\Gamma_x}a_\gamma,\qquad \text{for any }x\in\Vbf.
    \end{equation*}
    Hereafter we will only consider the minimal flux limiter $c_x$, namely
    \begin{equation*}
        c_x\coloneqq\max_{\gamma\in\Gamma_x}a_\gamma,\qquad \text{for any }x\in\Vbf.
    \end{equation*}
    The flux limiter, roughly speaking, is a choice of a constant on each vertex which bound from above the time derivative of any subsolutions on it. We refer to~\cite{Siconolfi22} for the definition of solution to~\cref{eq:globteik} and the role of the flux limiter in it.

    Next we define a global Lagrangian $L:T\Gamma\to\Rds$ by gluing together the $L_\gamma$ and taking into account the flux limiter $c_x$:
    \begin{mylist}
        \item if $x=\gamma(s)$ for some $\gamma\in\Ecal$ and $s\in(0,1)$ then
        \begin{equation*}
            L(x,q)\coloneqq L_\gamma\mleft(s,\frac{q\dot\gamma(s)}{|\dot\gamma(s)|^2}\mright);
        \end{equation*}
        \item if $x\in\Vbf$ and $q\ne0$ then
        \begin{equation*}
            L(x,q)\coloneqq\min L_\gamma\mleft(0,\frac{q\dot\gamma(0)}{|\dot\gamma(0)|^2}\mright),
        \end{equation*}
        where the minimum is taken over the $\gamma\in\Gamma_x$ with $\dot\gamma(0)$ parallel to $q$;
        \item if $x\in\Vbf$ and $q=0$ then
        \begin{equation*}
            L(x,q)\coloneqq-c_x.
        \end{equation*}
    \end{mylist}
    We notice that thanks to~\eqref{eq:lagcomp} $L$ is a well-defined function in $T\Gamma$.

    \begin{theo}
        \emph{\cite[Theorem 6.7]{PozzaSiconolfi23}} Given an initial datum $\phi\in C(\Gamma)$, the function
        \begin{equation}\label{eq:vft}
            (\Scal(t)\phi)(x)\coloneqq\min\mleft\{\phi(\xi(0))+\int_0^t L\mleft(\xi,\dot\xi\mright)\drm\tau: \text{$\xi$ is a curve with }\xi(t)=x\mright\}.
        \end{equation}
        is the unique solution to~\cref{eq:globteik} with $\Scal(0)\phi=\phi$ and flux limiter $c_x$.
    \end{theo}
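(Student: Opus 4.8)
The plan is to establish in turn the three defining features of the claimed solution: that the formula~\eqref{eq:vft} is well posed, that it attains the initial datum and solves~\cref{eq:globteik} with flux limiter $c_x$ in the viscosity sense, and finally that a solution with these properties is unique. For the well-posedness I would first show the infimum in~\eqref{eq:vft} is attained. Since $\phi\in C(\Gamma)$ and $\Gamma$ is compact, $\phi$ is bounded, so along a minimizing sequence of curves $\xi_n$ the actions $\int_0^t L(\xi_n,\dot\xi_n)\,\drm\tau$ are bounded above; by \cref{condsuplin} the $L_\gamma$, and hence $L$, are superlinear in the velocity, giving a lower bound $L(x,q)\ge C|q|-C'$ and, via de la Vallée Poussin's criterion, equi-integrability of the $\dot\xi_n$. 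Up to a subsequence $\xi_n$ converges uniformly to an admissible curve $\xi$ with $\xi(t)=x$, and convexity of $L$ in the velocity (\cref{condconv}) yields lower semicontinuity of the action along this convergence, so $\xi$ is a minimizer. Some care is needed because curves may pass through vertices, where $T\Gamma$ is not a genuine vector bundle, but $\Gamma$ is a compact length space and the argument is the usual Tonelli one adapted to this setting.

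Next I would record the dynamic programming principle $\Scal(t+r)\phi=\Scal(t)(\Scal(r)\phi)$, which is immediate from concatenation of curves and additivity of the time integral, and I would show $\Scal(t)\phi\to\phi$ uniformly as $t\to0^+$: an upper bound follows by testing~\eqref{eq:vft} on short competitor curves that stay near $x$ on an incident arc, or simply rest at $x$ when $x\in\Vbf$ where $L(x,0)=-c_x$, and a matching lower bound follows from the coercive lower bound on $L$. Joint continuity of $(x,t)\mapsto(\Scal(t)\phi)(x)$ is obtained by the same kind of estimates directly from the formula.

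The substantive step is that $\Scal(t)\phi$ solves~\cref{eq:globteik}. On the interior $(0,|\gamma|)\times(0,\infty)$ of each arc this is the classical one-dimensional Lax--Oleinik theory: using the dynamic programming principle one checks that $(\Scal(t)\phi)\circ\gamma$ is a viscosity solution of~\cref{eq:teikg}. The delicate part, and the one I expect to be the main obstacle, is the junction condition at each $x\in\Vbf$, i.e.\ verifying that $\Scal(t)\phi$ is a flux-limited solution in the sense of Imbert--Monneau with limiter $c_x$. For the supersolution inequality one analyses an optimal trajectory reaching $(x,t_0)$: either it arrives along some incident arc, and then the relevant inequality is inherited from the interior analysis, or it rests at the vertex over a time interval ending at $t_0$, where the running cost is exactly $-c_x$ per unit time by the definition of $L$ at $q=0$, which is precisely what produces the bound $\partial_t(\Scal(t)\phi)+c_x\ge0$ there. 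For the subsolution inequality one argues dually via the dynamic programming principle with competitors that either enter an incident arc slightly or wait at $x$; the identity $c_x=\max_{\gamma\in\Gamma_x}a_\gamma$ enters because waiting at $x$ is never worse than sitting at the best ``rest momentum'' of $H_\gamma$ available on any incident arc. Reconciling the representation formula with the Imbert--Monneau notion of flux-limited solution at the vertices is where the structure of $L$ near $\Vbf$ must be used most carefully.

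Finally, uniqueness follows from the comparison principle for this class of problems: two solutions of~\cref{eq:globteik} with the same initial datum $\phi$ and the same flux limiter $c_x$ must coincide, as established in~\cite{Siconolfi22,ImbertMonneau17}. Hence $\Scal(t)\phi$ is the unique solution with $\Scal(0)\phi=\phi$ and flux limiter $c_x$.
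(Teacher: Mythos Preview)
The paper does not prove this theorem at all: it is stated with the attribution ``\cite[Theorem 6.7]{PozzaSiconolfi23}'' and no argument is given, so there is no in-paper proof to compare against. Your sketch is a plausible outline of the standard Lax--Oleinik/Tonelli programme adapted to networks (existence of minimizers via superlinearity and convexity, dynamic programming, interior viscosity property on each arc, junction condition via the structure of $L$ at vertices, uniqueness by comparison), and it correctly identifies the flux-limited junction condition as the nontrivial point; but since the present paper offers no proof, nothing further can be said about agreement or divergence with it.
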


    We stress out that there exists an optimal curve for $(\Scal(t)\phi)(x)$. Furthermore, the family of operators $\{\Scal(t)\}_{t\in\Rds^+}$ form a semigroup whose main properties are summarized below.

    \begin{prop}\label{Scalprop}
        \emph{\cite[Proposition 4.13]{Pozza25_2}}
        \begin{myenum}
            \item\emph{(Semigroup property)} For any $t,t'\in\Rds^+$ we have $\Scal(t+t')=\Scal(t)\circ\Scal(t')$;
            \item\emph{(Monotonicity property)} for every $\phi_1,\phi_2\in C(\Gamma)$ such that $\phi_1\le\phi_2$ in $\Gamma$
            \begin{equation*}
                \Scal(t)\phi_1\le\Scal(t)\phi_2,\qquad \text{for any }t\in\Rds^+;
            \end{equation*}
            \item for any $\phi\in C(\Gamma)$, $t\in\Rds^+$ and $a\in\Rds$, $\Scal(t)(\phi+a)=\Scal(t)\phi+a$.
        \end{myenum}
    \end{prop}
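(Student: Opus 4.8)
The plan is to read all three assertions off the representation formula \eqref{eq:vft}, keeping in mind that the minimum there is always attained. The monotonicity and additive-constant properties are immediate. For monotonicity, fix $x\in\Gamma$ and $t\in\Rds^+$; if $\phi_1\le\phi_2$ then for every admissible curve $\xi$ with $\xi(t)=x$ we have $\phi_1(\xi(0))+\int_0^t L\mleft(\xi,\dot\xi\mright)\drm\tau\le\phi_2(\xi(0))+\int_0^t L\mleft(\xi,\dot\xi\mright)\drm\tau$, and taking the minimum over such $\xi$ on both sides gives $\Scal(t)\phi_1\le\Scal(t)\phi_2$. For the third property, an additive constant $a$ merely shifts the term $\phi(\xi(0))$ in \eqref{eq:vft} and hence factors out of the minimum, so $\Scal(t)(\phi+a)=\Scal(t)\phi+a$.

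The real content is the semigroup property, which is a dynamic programming principle. Fix $t,t'\in\Rds^+$, $\phi\in C(\Gamma)$ and $x\in\Gamma$, and set $\psi\coloneqq\Scal(t')\phi$ (the case $t=0$ or $t'=0$ is trivial since $\Scal(0)$ is the identity). To prove $(\Scal(t+t')\phi)(x)\le(\Scal(t)\psi)(x)$ I would pick a curve $\eta\colon[0,t]\to\Gamma$ with $\eta(t)=x$ attaining $(\Scal(t)\psi)(x)$ and then a curve $\zeta\colon[0,t']\to\Gamma$ with $\zeta(t')=\eta(0)$ attaining $\psi(\eta(0))=(\Scal(t')\phi)(\eta(0))$; their concatenation $\xi$, equal to $\zeta$ on $[0,t']$ and to $\tau\mapsto\eta(\tau-t')$ on $[t',t+t']$, is an admissible curve with $\xi(t+t')=x$, and since $L$ is autonomous, additivity of the integral together with the change of variables $\sigma=\tau-t'$ gives $\phi(\xi(0))+\int_0^{t+t'}L\mleft(\xi,\dot\xi\mright)\drm\tau=\psi(\eta(0))+\int_0^t L\mleft(\eta,\dot\eta\mright)\drm\tau=(\Scal(t)\psi)(x)$, whence the claimed inequality since $\xi$ is a competitor in \eqref{eq:vft} for $(\Scal(t+t')\phi)(x)$. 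For the reverse inequality, let $\xi\colon[0,t+t']\to\Gamma$ be any admissible curve with $\xi(t+t')=x$; its restriction to $[0,t']$ is admissible and ends at $\xi(t')$, so $\phi(\xi(0))+\int_0^{t'}L\mleft(\xi,\dot\xi\mright)\drm\tau\ge\psi(\xi(t'))$, while $\tau\mapsto\xi(\tau+t')$ is an admissible curve on $[0,t]$ ending at $x$, so $\psi(\xi(t'))+\int_{t'}^{t+t'}L\mleft(\xi,\dot\xi\mright)\drm\tau\ge(\Scal(t)\psi)(x)$; chaining these two estimates and using additivity of the integral yields $\phi(\xi(0))+\int_0^{t+t'}L\mleft(\xi,\dot\xi\mright)\drm\tau\ge(\Scal(t)\psi)(x)$, and minimizing over $\xi$ finishes the argument.

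The only delicate point is checking that these cut-and-paste operations stay inside the admissible class: the concatenation of two absolutely continuous curves in $\Gamma$ that agree at their common endpoint is again an absolutely continuous curve in $\Gamma$, with $\mleft(\xi,\dot\xi\mright)\in T\Gamma$ almost everywhere so that $\int_0^{t+t'}L\mleft(\xi,\dot\xi\mright)\drm\tau$ is well defined, and restrictions plainly inherit this. This is routine, since absolute continuity is a local property and the junction point carries no mass, but it is the one step where the network structure (as opposed to, say, $\Tds^N$) has to be invoked; everything else is bookkeeping with the minimum and the time-independence of $L$.
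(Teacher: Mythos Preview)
Your argument is correct: all three properties follow directly from the Lax--Oleinik representation \eqref{eq:vft}, and your dynamic-programming proof of the semigroup identity is the standard one, with the cut-and-paste of absolutely continuous curves on $\Gamma$ handled properly.

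The paper itself does not prove this proposition at all; it simply imports it from \cite[Proposition 4.13]{Pozza25_2}. So rather than a different route, you are supplying a self-contained proof where the paper defers to an external reference. What your approach buys is independence from that citation and transparency about why the network structure plays no essential role here beyond ensuring that concatenations and restrictions of admissible curves remain admissible; what the citation buys is brevity and an appeal to a source where the admissible-curve class and the well-posedness of the action integral on $T\Gamma$ have already been worked out in detail.
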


    \begin{theo}\label{liptsol}
        \emph{\cite[Theorem 7.4]{Siconolfi22}} If the initial datum $\phi$ is Lipschitz continuous, then $(x,t)\mapsto(\Scal(t)\phi)(x)$ is Lipschitz continuous in $\Gamma\times\Rds^+$.
    \end{theo}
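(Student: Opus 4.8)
The plan is to show that $(x,t)\mapsto u(x,t):=(\Scal(t)\phi)(x)$ is Lipschitz by producing a Lipschitz bound in $t$ that is uniform in $x$ and a Lipschitz bound in $x$ that is uniform in $t$, and then adding the two; the mildly clever point is that the uniformity in $t$ (which is what makes the statement hold on all of $\Gamma\times\Rds^+$ rather than on compact time intervals) will be extracted from the semigroup and monotonicity properties of $\{\Scal(t)\}$ rather than from the equation. Throughout, $\ell$ denotes a Lipschitz constant of $\phi$ with respect to $d_\Gamma$ (equivalently, to the Euclidean distance).

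First I would establish the short‑time bound $|u(x,t)-\phi(x)|\le C_0 t$ with $C_0$ depending only on $\ell$ and on the Hamiltonians. The upper estimate is immediate from the representation formula~\eqref{eq:vft} applied to the stationary curve $\tau\mapsto x$, giving $u(x,t)\le\phi(x)+t\,L(x,0)$, and $L(\cdot,0)$ is bounded above on $\Gamma$. For the lower estimate I would take an optimal curve $\xi$ for $u(x,t)$ (it exists, as recalled after~\eqref{eq:vft}), write $\phi(\xi(0))\ge\phi(x)-\ell\int_0^t|\dot\xi|\,\drm\tau$ because $\xi$ joins $\xi(0)$ to $x$ inside $\Gamma$, and absorb the length term through the uniform superlinear coercivity~\ref{condsuplin} in the form $L(w,q)\ge\ell|q|-M(\ell)$ on $T\Gamma$; this yields $u(x,t)\ge\phi(x)-M(\ell)t$.

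Next, reading this bound at time $h$ gives $\phi-C_0h\le\Scal(h)\phi\le\phi+C_0h$ on $\Gamma$; applying $\Scal(t)$ to the three members and invoking the monotonicity property, the identity $\Scal(t)(\psi+a)=\Scal(t)\psi+a$, and the semigroup property of \cref{Scalprop} gives at once $\Scal(t)\phi-C_0h\le\Scal(t+h)\phi\le\Scal(t)\phi+C_0h$, that is $|u(x,t+h)-u(x,t)|\le C_0h$ for all $x\in\Gamma$ and $t,h\ge0$. For the space variable I would fix $t$ and $x,y$ and set $\delta:=d_\Gamma(x,y)$: if $\delta>t$ I estimate crudely through $\phi$, obtaining $|u(x,t)-u(y,t)|\le 2C_0t+\ell\delta\le(2C_0+\ell)\delta$; if $\delta\le t$ I build a competitor for $u(y,t)$ by concatenating an optimal curve for $u(x,t-\delta)$ with a unit‑speed geodesic from $x$ to $y$ run over the last time interval of length $\delta$, which by~\eqref{eq:vft} gives $u(y,t)\le u(x,t-\delta)+\Lambda_1\delta$ with $\Lambda_1:=\sup\{L(w,q):(w,q)\in T\Gamma,\ |q|\le1\}$, and then the time estimate replaces $u(x,t-\delta)$ by $u(x,t)+C_0\delta$. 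Symmetrizing in $x$ and $y$ yields $|u(x,t)-u(y,t)|\le C_1\,d_\Gamma(x,y)$ with $C_1$ independent of $t$, and combining with the time estimate finishes the proof.

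I expect the real work to be the vertex bookkeeping hidden in the three constants above: one has to check that $\sup_\Gamma L(\cdot,0)$, the superlinearity gap $M(\ell)$, and $\Lambda_1$ are finite even though the global Lagrangian $L$ on $T\Gamma$ is built by taking, at each vertex, a minimum over the incident arcs and is in general not continuous there — this reduces to uniform continuity and coercivity of the $H_\gamma$ on the compact intervals $[0,|\gamma|]$ together with the definition of the flux limiter $c_x$ — and one has to verify that the spliced curve in the space estimate is an admissible competitor in~\eqref{eq:vft} and that the geodesic used exists, both of which follow from the facts recalled in \cref{netsec,HJsec}.
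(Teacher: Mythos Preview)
The paper does not supply its own proof of this statement: \cref{liptsol} is quoted verbatim from \cite[Theorem~7.4]{Siconolfi22} and is used as a black box. There is therefore nothing in the present paper to compare your argument against.

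That said, your outline is a correct and self-contained proof. The four ingredients --- the short-time bound $|u(\cdot,t)-\phi|\le C_0t$ obtained from the representation formula, the propagation of this bound to arbitrary times via the semigroup, monotonicity, and commutation-with-constants properties of \cref{Scalprop}, and the space estimate via concatenation with a unit-speed geodesic --- form the standard route to global Lipschitz regularity of value functions, and each step survives the network setting. Your remarks about the vertex bookkeeping are accurate: the global Lagrangian is not continuous at vertices, but the three constants $\sup_\Gamma L(\cdot,0)$, $M(\ell)$, and $\Lambda_1$ are nevertheless finite because each $L_\gamma$ is continuous on the compact set $[0,|\gamma|]\times[-1,1]$, the minimum over incident arcs at a vertex only helps, and $L(x,0)=-c_x$ is controlled by $a_\gamma\le\max_{s}H_\gamma(s,0)\le M(\ell)$. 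The spliced competitor is admissible because a geodesic in $\Gamma$ is a finite concatenation of arc segments, hence absolutely continuous, and concatenation with an optimal curve preserves absolute continuity.

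One cosmetic point: in the space estimate you could avoid the case split $\delta\gtrless t$ entirely by noting that the concatenation trick already works for all $t\ge0$ once you replace $u(x,t-\delta)$ by $u(x,\max\{t-\delta,0\})$ and absorb the difference via the short-time bound; but the version you wrote is perfectly fine.
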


    \subsection{Approximation of the Critical Value}

    We consider the eikonal equation
    \begin{equation}\label{eq:globeik}\tag{\ensuremath{\Hcal}J\ensuremath{a}}
        \Hcal(x,\Drm u)=a,\qquad \text{on }\Gamma,
    \end{equation}
    namely the family of Hamilton--Jacobi equations
    \begin{equation*}
        H_\gamma(s,\partial U)=a,\qquad \text{on }[0,|\gamma|],
    \end{equation*}
    where $a\in\Rds$, for $\gamma$ varying in $\Ecal$ and coupled with a state-constraint-type boundary condition at the vertices. The \emph{critical value}, or \emph{Mañé critical value}, is characterized in~\cite{SiconolfiSorrentino18,Pozza25} as the unique value
    \begin{equation}\label{eq:critvaldef}
        c\ge a_0\coloneqq\max_{\gamma\in\Ecal}a_\gamma
    \end{equation}
    for which \hrefc{} (namely the equation~\eqref{eq:globeik} with $a=c$) admits solutions.

    It has been shown in~\cite{Pozza25_2} that $\Scal(t)\phi+ct$ converges to a solution to~\hrefc{} as $t\to\infty$. We will use this fact to approximate the critical value.

    The large time behavior of the solutions to~\cref{eq:globteik} is mainly due to the relation between the optimal curves of~\cref{eq:vft} and the so called Aubry set. More in detail, let $\sigma_c$ be a real map on the tangent bundle $T\Gamma$ such that:
    \begin{mylist}
        \item if $x=\gamma(s)$ for some $\gamma\in\Ecal$ and $s\in(0,|\gamma|)$ then
        \begin{equation*}
            \sigma_c(x,q)\coloneqq\max\{\mu q\dot\gamma(s):\mu\in\Rds,\,H_\gamma(s,\mu)=c\};
        \end{equation*}
        \item if $x\in\Vbf$ then
        \begin{equation*}
            \sigma_c(x,q)\coloneqq\min\max\{\mu q\dot\gamma(0):\mu\in\Rds,\,H_\gamma(0,\mu)=c\},
        \end{equation*}
        where the minimum is taken over the $\gamma\in\Gamma_x$ with $\dot\gamma(0)$ parallel to $q$.
    \end{mylist}

    \begin{defin}
        We call \emph{Aubry set} on $\Gamma$, the closed set $\Acal_\Gamma$ made up of
        \begin{mylist}
            \item the $x\in\Gamma$ incident to a closed curve $\xi:[0,T]\to\Gamma$ with $\int_0^T\sigma_c\mleft(\xi,\dot\xi\mright)d\tau=0$ and a.e.\ non-vanishing derivative;
            \item the $x=\gamma(s)$ with $\gamma\in\Ecal$ and $s\in[0,1]$ such that the set $\{\mu\in\Rds:\,H_\gamma(s,\mu)=c\}$ is a singleton.
        \end{mylist}
    \end{defin}

    Setting the semidistance
    \begin{equation*}
        S_c(y,x)\coloneqq\min\mleft\{\int_0^T\sigma_c\mleft(\xi,\dot\xi\mright)\drm\tau: \text{$\xi:[0,T]\to\Gamma$ is a curve from $y$ to $x$}\mright\}
    \end{equation*}
    we further have

    \begin{theo}\label{maxsubsol}
        \emph{\cite[Theorem 4.8 and Corollary 4.10]{Pozza25}} Let $\Gamma'$ be a closed subset of $\Gamma$, $w:\Gamma'\to\Rds$ be a continuous function and define
        \begin{equation*}
            u(x)\coloneqq\min_{y\in\Gamma'}(w(y)+S_c(y,x)),\qquad \text{for }x\in\Gamma.
        \end{equation*}
        Then $u$ is the maximal subsolution to \hrefc{} not exceeding $w$ on $\Gamma'$ and a solution in $\Gamma\setminus(\Gamma'\setminus\Acal_\Gamma)$. If $w$ is a subsolution $u$ agrees with it on $\Gamma'$.
    \end{theo}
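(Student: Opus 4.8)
Proof proposal (sketch).

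The plan is to read this as a weak KAM statement resting on two structural facts, the second of which carries the real load. The first concerns $\sigma_c$: since $c\ge a_0$, for every $(s,\mu)$ the sublevel set $\{\mu'\in\Rds:H_\gamma(s,\mu')\le c\}$ is a nonempty compact interval — nonempty by the definition of $a_0$, compact by the superlinear coercivity of $H_\gamma$ uniform in $s$, an interval by its strict quasiconvexity; when it is nondegenerate its two endpoints are exactly the points of $\{H_\gamma(s,\cdot)=c\}$, and degeneracy occurs precisely when $c=\min_\mu H_\gamma(s,\mu)$, matching the second clause defining $\Acal_\Gamma$. In all cases $\sigma_c(x,q)$ is positively $1$-homogeneous in $q$ with $\sigma_c(x,0)=0$, and at interior points $q\mapsto\sigma_c(x,q)$ is the convex support function of the sublevel set evaluated at $q\dot\gamma(s)$. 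The second fact is the equivalence: a Lipschitz $v$ is a subsolution of \hrefc{} (junction conditions at the vertices included) if and only if
\[
(v\circ\xi)(b)-(v\circ\xi)(a)\le\int_a^b\sigma_c\mleft(\xi,\dot\xi\mright)\drm\tau\qquad\text{for every curve }\xi\text{ and every }0\le a\le b,
\]
equivalently, minimising over curves from $y$ to $x$, $v(x)-v(y)\le S_c(y,x)$ for all $x,y\in\Gamma$. I would obtain this from the identification of $\sigma_c$ with the support function of the sublevel sets of $H_\gamma$ together with the characterisation of viscosity subsolutions on networks from \cite{Siconolfi22,ImbertMonneau17}, watching the role of the flux limiter at the vertices.

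Granting these, the subsolution, upper-bound and maximality parts are quick. First, $u$ is well defined, since $\Gamma$ and hence the closed set $\Gamma'$ are compact and $y\mapsto w(y)+S_c(y,x)$ is continuous, so the minimum is attained. Concatenation of curves gives the triangle inequality $S_c(y,x)-S_c(y,z)\le S_c(z,x)$ for all $x,y,z\in\Gamma$; hence, choosing for each $z$ a minimiser $y_z$ of $y\mapsto w(y)+S_c(y,z)$,
\[
u(x)-u(z)\le\bigl(w(y_z)+S_c(y_z,x)\bigr)-\bigl(w(y_z)+S_c(y_z,z)\bigr)\le S_c(z,x)\le\int_a^b\sigma_c\mleft(\xi,\dot\xi\mright)\drm\tau
\]
along every curve from $z$ to $x$, so $u$ itself satisfies the fundamental inequality and is therefore a subsolution of \hrefc{}. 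Taking the constant curve yields $S_c(x,x)\le0$, so $u(x)\le w(x)+S_c(x,x)\le w(x)$ on $\Gamma'$. If $v$ is any subsolution with $v\le w$ on $\Gamma'$, then for every $y\in\Gamma'$ and $x\in\Gamma$ we have $v(x)\le v(y)+S_c(y,x)\le w(y)+S_c(y,x)$, hence $v\le u$ on $\Gamma$: so $u$ is maximal. Finally, if $w$ is the trace on $\Gamma'$ of a subsolution, then $w(x)\le w(y)+S_c(y,x)$ for all $x,y\in\Gamma'$, so $w\le u$ on $\Gamma'$, and together with $u\le w$ this forces $u=w$ on $\Gamma'$.

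There remains the assertion that $u$ is a supersolution, hence a solution, at each $x_0\in\Gamma\setminus(\Gamma'\setminus\Acal_\Gamma)$, and this is where I expect the real work. I would use the touching-from-below device: take a minimiser $y_0\in\Gamma'$ of $y\mapsto w(y)+S_c(y,x_0)$, so that $u\le w(y_0)+S_c(y_0,\cdot)$ with equality at $x_0$; then every admissible test function touching $u$ from below at $x_0$ also touches $w(y_0)+S_c(y_0,\cdot)$ from below there, and it suffices to know that $S_c(y_0,\cdot)$ is a supersolution of \hrefc{} at $x_0$. The crux is then the claim that, for each $y$, $S_c(y,\cdot)$ is a solution of \hrefc{} on $\Gamma\setminus\{y\}$ and, whenever $y\in\Acal_\Gamma$, on the whole of $\Gamma$. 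With this in hand the case analysis closes the proof: if $x_0\ne y_0$ the claim applies directly; if $x_0=y_0$ then $x_0\in\Gamma'$, so the hypothesis $x_0\notin\Gamma'\setminus\Acal_\Gamma$ gives $x_0\in\Acal_\Gamma$, i.e. $y_0\in\Acal_\Gamma$, and the claim applies again. To establish the claim, for $x\ne y$ I would produce an optimal curve $\xi$ for $S_c(y,x)$ and observe that $S_c(y,\cdot)$ is calibrated along it, $S_c(y,\xi(t'))-S_c(y,\xi(t))=\int_t^{t'}\sigma_c(\xi,\dot\xi)\,\drm\tau$; read near $x$, this forces the arc-derivative of $S_c(y,\cdot)$ to the appropriate endpoint of $\{H_\gamma(s,\cdot)\le c\}$, yielding $H_\gamma\bigl(s,\partial(S_c(y,\cdot)\circ\gamma)\bigr)=c$ on the arcs and the corresponding supersolution inequality at the vertices through the flux-limiter test; and when $y\in\Acal_\Gamma$ the definition of $\Acal_\Gamma$ furnishes either a closed calibrated curve incident to $y$ or a singleton level set pinning the derivative, exactly what is needed to carry the argument through at $x=y$.

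The main obstacle is precisely this claim about $S_c(y,\cdot)$, and within it two points. First, the running cost $\sigma_c$ is positively $1$-homogeneous, so the competitors in $S_c(y,x)$ carry no a priori length bound, and existence of optimal — then calibrated — curves must be secured by a compactness argument tailored to the finiteness of $\Ecal$. Second, one must pin down that $S_c(y,\cdot)$ acquires the supersolution property at $y$ itself exactly when $y\in\Acal_\Gamma$, which is where both clauses in the definition of the Aubry set enter. Throughout, the junction conditions at the vertices — governed by the flux limiter $c_x$ — are the other delicate ingredient and, in contrast with the classical setting on $\Tds^N$, must be tracked explicitly at each step rather than absorbed into the one-dimensional arguments on the arcs.
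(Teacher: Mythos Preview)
The paper does not prove this \namecref{maxsubsol}: it is quoted verbatim from \cite[Theorem 4.8 and Corollary 4.10]{Pozza25} and no argument is given here. So there is no ``paper's own proof'' to compare against; the statement functions as imported background.

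That said, your sketch is the standard weak KAM route and is almost certainly the one taken in \cite{Pozza25}: identify $\sigma_c$ with the support function of the sublevel sets, characterise subsolutions by the fundamental inequality $v(x)-v(y)\le S_c(y,x)$, deduce subsolution/maximality of $u$ from the triangle inequality for $S_c$, and for the supersolution part reduce to showing $S_c(y,\cdot)$ solves \hrefc{} on $\Gamma\setminus\{y\}$ (and on all of $\Gamma$ when $y\in\Acal_\Gamma$) via calibrated curves. You have correctly isolated the two genuine technical points --- existence of optimal curves despite the $1$-homogeneity of $\sigma_c$, and the Aubry-set dichotomy at $x_0=y_0$ --- and the need to track the flux-limiter condition at vertices. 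None of this is filled in here because the present paper simply imports the result; if you want the details, \cite{Pozza25} is the reference to consult.
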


    As pointed out in~\cite{Pozza25_2}, the convergence of $\Scal(t)\phi+ct$ is influenced by the flux limiter. We account for this by extending, in a certain sense, the Aubry set.

    \begin{defin}
        We call \emph{extended Aubry set} on $\Gamma$, the closed set $\wtAcal_\Gamma$ made up by
        \begin{mylist}
            \item the $x\in\Acal_\Gamma$;
            \item the $x\in\Vbf$ such that $c_x=c$.
        \end{mylist}
    \end{defin}

    The large time behavior of solutions to~\cref{eq:globteik} in our setting is described by the next \namecref{ltb}, whose proof is given in \cref{ltbexp}.

    \begin{theo}\label{ltb}
        Given $\phi\in C(\Gamma)$, the function $\Scal(t)\phi+ct$ uniformly converges, as $t$ goes to $\infty$, to
        \begin{equation}\label{eq:ltb.1}
            u(x)\coloneqq\min_{y\in\wtAcal_\Gamma}\mleft(\min_{z\in\Gamma}(\phi(z)+S_c(z,y))+S_c(y,x)\mright),\qquad \text{for }x\in\Gamma.
        \end{equation}
    \end{theo}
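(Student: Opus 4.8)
The plan is to follow the strategy of \cite{Pozza25_2}, adapting it to accommodate the flux limiter through the extended Aubry set $\wtAcal_\Gamma$. The proof splits into establishing that the right-hand side $u$ in \eqref{eq:ltb.1} is a genuine limit candidate (it is a solution to \hrefc{}) and then proving convergence $\Scal(t)\phi+ct\to u$ via a squeeze argument between $\limsup$ and $\liminf$.

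First I would record some preliminary reductions. By \cref{Scalprop} (iii), adding a constant to $\phi$ shifts both $\Scal(t)\phi$ and $u$ by the same constant, so we may normalize $\phi$ conveniently; and by \cref{Scalprop} (ii) together with density of Lipschitz functions in $C(\Gamma)$ and the non-expansiveness of $\Scal(t)$ in sup-norm, it suffices to treat Lipschitz initial data, which by \cref{liptsol} makes $(x,t)\mapsto(\Scal(t)\phi)(x)$ Lipschitz — this gives equi-Lipschitz bounds in $x$ uniformly in $t$, hence precompactness in $C(\Gamma)$ of the family $\{\Scal(t)\phi+ct\}_{t\ge1}$ along subsequences via Ascoli–Arzelà. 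Next I would verify that $u$ as defined is well-posed and is a solution to \hrefc{}: writing $w(y)\coloneqq\min_{z\in\Gamma}(\phi(z)+S_c(z,y))$ for $y\in\wtAcal_\Gamma$, $u$ has exactly the form $\min_{y\in\wtAcal_\Gamma}(w(y)+S_c(y,x))$ appearing in \cref{maxsubsol} with $\Gamma'=\wtAcal_\Gamma$; since $w$ is itself (the restriction to $\wtAcal_\Gamma$ of) a subsolution — being an inf of the subsolutions $z\mapsto\phi(z)+S_c(z,\cdot)$ shifted appropriately, or more carefully because $S_c(z,\cdot)$ solves \hrefc{} away from $z$ — \cref{maxsubsol} yields that $u$ is a subsolution on all of $\Gamma$ and a solution on $\Gamma\setminus(\wtAcal_\Gamma\setminus\Acal_\Gamma)$; the remaining points of $\wtAcal_\Gamma\setminus\Acal_\Gamma$ are vertices $x$ with $c_x=c$, and there one checks directly from the definition of solution at a vertex with the flux limiter (as in \cite{Siconolfi22}) that the flux-limited condition is met, so $u$ solves \hrefc{} everywhere.

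The core is the asymptotic estimate. For the $\limsup$ bound: using the variational formula \eqref{eq:vft} and the semigroup property, for any $y\in\wtAcal_\Gamma$ and any curve reaching $x$ I would splice an optimal trajectory for $S_c$ — by definition of $\wtAcal_\Gamma$ there are closed curves through $y$ with vanishing $\sigma_c$-action (or, at a flux-limited vertex, a stationary trajectory whose running cost is exactly $-c$), so one can park near $y$ for almost all of the time interval $[0,t]$ at asymptotically zero excess cost and then run to $x$ in bounded time; the inequality $L\ge \sigma_c-c$ pointwise (this is the standard relation $L(x,q)+c\ge\sigma_c(x,q)$, valid at interior points from the Legendre duality $L_\gamma(s,\lambda)+c=\max\{\lambda\mu:H_\gamma(s,\mu)\le c\}\ge\max\{\lambda\mu:H_\gamma(s,\mu)=c\}$, and checked at vertices from the gluing definitions of $L$ and $\sigma_c$) then converts this into $\limsup_{t\to\infty}(\Scal(t)\phi+ct)(x)\le u(x)$. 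For the $\liminf$ bound I would take a subsequence $t_n\to\infty$ along which $\Scal(t_n)\phi+ct_n\to\psi$ uniformly for some $\psi\in C(\Gamma)$; a standard stability argument shows $\psi$ is a subsolution to \hrefc{}, and by peeling off the last bounded time-chunk via the semigroup property and passing to the limit one gets that $\psi$ is in fact a fixed point of $t\mapsto\Scal(t)(\cdot)+ct$, hence a solution; then a representation/comparison step shows any such accumulation point $\psi$ must equal $u$ — here one uses that $\psi(x)\ge\psi(y)+S_c(y,x)$ for $y\in\wtAcal_\Gamma$ (solutions satisfy this) together with $\psi\le\liminf(\Scal(t)\phi+ct)\le\limsup\le u$ from the previous bound and a matching lower bound $\psi(y)\ge w(y)$ on $\wtAcal_\Gamma$ obtained by tracking the initial datum through \eqref{eq:vft}. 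Combining, $\psi=u$ for every accumulation point, so the full limit exists and equals $u$.

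\textbf{The main obstacle} I anticipate is the $\liminf$ side, specifically pinning down the value of an arbitrary accumulation point $\psi$ on the \emph{extended} part $\wtAcal_\Gamma\setminus\Acal_\Gamma$ — the flux-limited vertices. On the genuine Aubry set $\Acal_\Gamma$ the classical machinery (uniqueness of solutions once data are prescribed on $\Acal_\Gamma$, cf.\ \cref{maxsubsol}) applies, but at a vertex $x$ with $c_x=c$ one must show that the long-time dynamics actually "feels" this vertex as a place where trajectories can stall at marginal cost, i.e.\ that $\psi(x)=\lim_t(\Scal(t)\phi+ct)(x)$ is governed by $w(x)$ and not strictly larger; this requires a careful local analysis at the vertex reconciling the flux-limiter's role in the definition of $L$ (the value $-c_x=-c$ when $q=0$) with the definition of $\sigma_c$ at vertices, and is exactly the point where the present setting departs from \cite{Pozza25_2}. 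I would handle it by exhibiting, for $t$ large, near-optimal curves in \eqref{eq:vft} that dwell at such a vertex for a long sub-interval, showing their cost is $w(x)-ct+o(1)$, and conversely bounding below any curve's cost by relating its time spent near the vertex to the flux-limited running cost; the equi-Lipschitz bound from \cref{liptsol} controls the error terms uniformly.
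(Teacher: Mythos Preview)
Your overall plan is sound, but both the organization and two specific steps differ from the paper's proof.

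The paper's route is purely PDE-theoretic and avoids curve constructions. It sets $w(x)\coloneqq\min_{z\in\Gamma}(\phi(z)+S_c(z,x))$, which by \cref{maxsubsol} is a subsolution to \hrefc{} with $w=u$ on $\wtAcal_\Gamma$. Combining \cref{weakltb} (applied to $w$ and to $u$) with monotonicity and the evolution comparison principle yields the sandwich $w\le\Scal(t)w+ct\le u$, with equality on $\wtAcal_\Gamma$. Standard stability of half-relaxed limits (\cref{ovphisupsol}) together with the eikonal comparison principle of~\cite{Pozza25} then give $\Scal(t)w+ct\to u$. Since $w\le\phi$, monotonicity delivers the lower bound $u\le\ovphi$ for free. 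The upper bound $\unphi\le u$ is reduced to showing $\unphi=u$ \emph{only on} $\wtAcal_\Gamma$ (deferred to arguments from~\cite{Pozza25}) and then extended globally via the maximality clause of \cref{maxsubsol}, since $\unphi$ is a subsolution. Your competitor-curve approach to the upper bound is a legitimate alternative, but it rests on the calibration identity $\inf_{T>0}\inf_\xi\bigl(\int_0^T L(\xi,\dot\xi)\,\drm\tau+cT\bigr)=S_c(z,y)$ and on the exact achievability of zero excess action along Aubry cycles, neither of which you justify.

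Two points in your $\liminf$ argument need repair. First, the assertion ``$\psi(x)\ge\psi(y)+S_c(y,x)$ for $y\in\wtAcal_\Gamma$ (solutions satisfy this)'' is false: subsolutions satisfy the \emph{opposite} inequality. What is true, and sufficient, is $(\Scal(t)\phi)(x)+ct\ge w(x)$ for all $(x,t)$ --- immediate from $L+c\ge\sigma_c$ applied to any competitor in \eqref{eq:vft} --- hence $\ovphi\ge w$ everywhere; combined with your upper bound this pins $\ovphi=\unphi=u$ on $\wtAcal_\Gamma$, and comparison on $\Gamma\setminus\wtAcal_\Gamma$ finishes. Second, a subsequential limit $\psi$ of $\Scal(t_n)\phi+ct_n$ is not automatically a semigroup fixed point: applying $\Scal(s)$ only shows that $\Scal(s)\psi+cs$ is \emph{another} accumulation point. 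This is why one should work with the half-relaxed limits $\unphi,\ovphi$ rather than individual subsequential limits. Finally, the obstacle you single out --- controlling the limit at flux-limited vertices $y\in\wtAcal_\Gamma\setminus\Acal_\Gamma$ --- is actually the easiest case in your own parking construction, since $L(y,0)=-c_y=-c$ there and the stationary curve parks at zero excess cost.
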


    The limit function~\cref{eq:ltb.1} is a solution to \hrefc{} in $\Gamma\setminus\mleft(\wtAcal_\Gamma\setminus\Acal_\Gamma\mright)$ agreeing with
    \begin{equation*}
        w(x)\coloneqq\min\limits_{y\in\Gamma}(\phi(y)+S_c(y,x)),\qquad \text{for }x\in\Gamma,
    \end{equation*}
    on $\wtAcal_\Gamma$, as well as the maximal subsolution in $\Gamma$ equal to $w$ on $\wtAcal_\Gamma$, see \cref{maxsubsol}.

    Let us now list some consequences of \cref{ltb} which will provide the theoretical basis for the algorithms in \cref{algosec}.

    \begin{theo}\label{basictheo}
        Given a Lipschitz continuous function $\phi$, we have that
        \begin{equation}\label{eq:basictheo.1}
            \mleft|\frac{\phi(x)-(\Scal(t)\phi)(x)}t-c\mright|\le2\ell\diam(\Gamma)\frac1t,\qquad \text{for any }(x,t)\in\Gamma\times(0,\infty),
        \end{equation}
        where $\ell$ is the Lipschitz constant of $(x,t)\mapsto(\Scal(t)\phi)(x)$.
    \end{theo}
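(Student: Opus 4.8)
The plan is to prove the equivalent bound
\[
\bigl|(\Scal(t)\phi)(x)+ct-\phi(x)\bigr|\le 2\ell\diam(\Gamma)\qquad\text{for all }(x,t)\in\Gamma\times(0,\infty),
\]
since $\phi(x)-(\Scal(t)\phi)(x)-ct=-\bigl((\Scal(t)\phi)(x)+ct-\phi(x)\bigr)$, so dividing by $t$ gives \eqref{eq:basictheo.1}. Write $v(\cdot,t):=\Scal(t)\phi$ and $\overline v(\cdot,t):=v(\cdot,t)+ct$, so that $\overline v(\cdot,0)=\phi$. By \cref{ltb} the functions $\overline v(\cdot,t)$ converge uniformly on $\Gamma$, as $t\to\infty$, to the function $u$ in \eqref{eq:ltb.1}. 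The first step is to record the identity $\Scal(t')u+ct'=u$ for every $t'\ge 0$: combining the semigroup property with the constant-shift property in \cref{Scalprop} one gets $\overline v(\cdot,t+t')=\Scal(t')\overline v(\cdot,t)+ct'$; since the monotonicity and constant-shift properties make $\Scal(t')$ a $1$-Lipschitz self-map of $(C(\Gamma),\|\cdot\|_\infty)$, letting $t\to\infty$ in this identity and using the uniform convergence yields $u=\Scal(t')u+ct'$.

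Next set $M(t):=\max_\Gamma(\overline v(\cdot,t)-u)$ and $m(t):=\min_\Gamma(\overline v(\cdot,t)-u)$, and show $M$ is nonincreasing and $m$ nondecreasing. From $\overline v(\cdot,t)\le u+M(t)$ on $\Gamma$, the monotonicity and constant-shift properties of \cref{Scalprop} give $\Scal(t')\overline v(\cdot,t)\le\Scal(t')u+M(t)$; adding $ct'$ and using $\Scal(t')u+ct'=u$ together with $\overline v(\cdot,t+t')=\Scal(t')\overline v(\cdot,t)+ct'$ yields $\overline v(\cdot,t+t')\le u+M(t)$, i.e.\ $M(t+t')\le M(t)$; the argument for $m$ is symmetric. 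Because $\overline v(\cdot,t)\to u$ uniformly we have $M(t),m(t)\to0$, so monotonicity forces $m(0)\le m(t)\le0\le M(t)\le M(0)$ for every $t\ge0$. Hence, for all $(x,t)$ (the case $t=0$ being immediate from the definitions of $m(0),M(0)$),
\[
m(0)\le\overline v(x,t)-u(x)\le M(0),
\]
and writing $\overline v(x,t)-\phi(x)=\bigl(\overline v(x,t)-u(x)\bigr)-\bigl(\phi(x)-u(x)\bigr)$ with both summands in $[m(0),M(0)]$ gives $\bigl|\overline v(x,t)-\phi(x)\bigr|\le M(0)-m(0)=\max_\Gamma(\phi-u)-\min_\Gamma(\phi-u)$.

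It remains to bound this oscillation. By \cref{liptsol} the map $(x,t)\mapsto(\Scal(t)\phi)(x)$ is $\ell$-Lipschitz, so $\phi=v(\cdot,0)$ is $\ell$-Lipschitz on $(\Gamma,d_\Gamma)$ and so is each $\overline v(\cdot,t)$ in the space variable; consequently their uniform limit $u$ is $\ell$-Lipschitz as well, whence $\phi-u$ is $2\ell$-Lipschitz with respect to $d_\Gamma$ and $\max_\Gamma(\phi-u)-\min_\Gamma(\phi-u)\le 2\ell\diam(\Gamma)$. Combined with the previous estimate this concludes the proof. The only points requiring care are the fixed-point identity $\Scal(t')u+ct'=u$ and the monotonicity of $M$ and $m$, but both reduce to straightforward applications of \cref{Scalprop} together with the uniform convergence supplied by \cref{ltb}; everything else is bookkeeping.
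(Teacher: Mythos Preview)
Your proof is correct and follows essentially the same strategy as the paper's: sandwich $\Scal(t)\phi+ct$ between $u+\min_\Gamma(\phi-u)$ and $u+\max_\Gamma(\phi-u)$ using the fixed-point identity $\Scal(t)u+ct=u$ together with \cref{Scalprop}, then conclude via a $2\ell$-Lipschitz bound. The only minor differences are that the paper obtains the fixed-point identity directly from \cref{weakltb} (rather than the heavier \cref{ltb}) and applies the Lipschitz estimate to $\phi$ and $\Scal(t)\phi$, by locating points $x_1,x_2$ where $\phi-\Scal(t)\phi-ct$ has a definite sign, instead of to $\phi$ and the limit $u$.
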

    \begin{proof}
        Let $u$ be a solution to \hrefc{} in $\Gamma\setminus\wtAcal_\Gamma$. We know from \cref{weakltb,Scalprop} that
        \begin{equation*}
            (\Scal(t)\phi)(x)+ct\le u(x)+\max_{y\in\Gamma}(\phi(y)-u(y)),\qquad \text{for any }(x,t)\in\Gamma\times\Rds^+.
        \end{equation*}
        It follows that there is an $x_1\in\Gamma$ such that
        \begin{equation}\label{eq:basictheo1}
            (\Scal(t)\phi)(x_1)+ct\le\phi(x_1),\qquad \text{for any }t\in\Rds^+,
        \end{equation}
        thus \cref{liptsol} yields, for any $(x,t)\in\Gamma\times\Rds^+$,
        \begin{equation}\label{eq:basictheo2}
            \begin{aligned}
                \phi(x)-(\Scal(t)\phi)(x)-ct\ge\;&\phi(x)-(\Scal(t)\phi)(x)-ct-\phi(x_1)+(\Scal(t)\phi)(x_1)+ct\\
                \ge\;&-2\ell d_\Gamma(x_1,x)\ge-2\ell\diam(\Gamma).
            \end{aligned}
        \end{equation}
        Exploiting once again \cref{weakltb,Scalprop} we get
        \begin{equation*}
            (\Scal(t)\phi)(x)+ct\ge u(x)+\min_{y\in\Gamma}(\phi(y)-u(y)),\qquad \text{for any }(x,t)\in\Gamma\times\Rds^+,
        \end{equation*}
        thereby there is an $x_2\in\Gamma$ such that
        \begin{equation}\label{eq:basictheo3}
            (\Scal(t)\phi)(x_2)+ct\ge\phi(x_2),\qquad \text{for any }t\in\Rds^+,
        \end{equation}
        and, as a consequence,
        \begin{equation*}
            \phi(x)-(\Scal(t)\phi)(x)-ct\le2\ell\diam(\Gamma),\qquad \text{for any }(x,t)\in\Gamma\times\Rds^+.
        \end{equation*}
        This, together with~\eqref{eq:basictheo2}, implies~\eqref{eq:basictheo.1} and concludes our proof.
    \end{proof}

    The bounds given below are a simple consequence of~\eqref{eq:basictheo1} and~\eqref{eq:basictheo3}.

    \begin{cor}\label{basicbounds}
        Given a Lipschitz continuous function $\phi$, we have that
        \begin{equation*}
            \min_{x\in\Gamma}\frac{\phi(x)-(\Scal(t)\phi)(x)}t\le c\le\max_{x\in\Gamma}\frac{\phi(x)-(\Scal(t)\phi)(x)}t,\qquad \text{for any }t\in(0,\infty).
        \end{equation*}
    \end{cor}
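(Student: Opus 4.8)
The plan is to read off the two inequalities directly from the key estimates \eqref{eq:basictheo1} and \eqref{eq:basictheo3} established inside the proof of \cref{basictheo}, without invoking any new machinery. Recall that \eqref{eq:basictheo1} produces a point $x_1\in\Gamma$ with $(\Scal(t)\phi)(x_1)+ct\le\phi(x_1)$ for every $t\in\Rds^+$, and \eqref{eq:basictheo3} produces a point $x_2\in\Gamma$ with $(\Scal(t)\phi)(x_2)+ct\ge\phi(x_2)$ for every $t\in\Rds^+$. These are two-sided ``anchor'' points: at $x_1$ the quotient $(\phi-\Scal(t)\phi)/t$ is at least $c$, and at $x_2$ it is at most $c$.

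Concretely, I would fix $t\in(0,\infty)$ and argue as follows. From \eqref{eq:basictheo1}, rearranging gives $\phi(x_1)-(\Scal(t)\phi)(x_1)\ge ct$, hence
\begin{equation*}
    \max_{x\in\Gamma}\frac{\phi(x)-(\Scal(t)\phi)(x)}t\ge\frac{\phi(x_1)-(\Scal(t)\phi)(x_1)}t\ge c.
\end{equation*}
Symmetrically, from \eqref{eq:basictheo3} we get $\phi(x_2)-(\Scal(t)\phi)(x_2)\le ct$, so
\begin{equation*}
    \min_{x\in\Gamma}\frac{\phi(x)-(\Scal(t)\phi)(x)}t\le\frac{\phi(x_2)-(\Scal(t)\phi)(x_2)}t\le c.
\end{equation*}
Since the maxima and minima over $\Gamma$ are attained (the map $x\mapsto(\Scal(t)\phi)(x)$ is continuous by \cref{liptsol} and $\Gamma$ is compact, and $\phi$ is continuous), combining the two displays yields the claimed chain of inequalities for every $t\in(0,\infty)$.

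There is essentially no obstacle here: the corollary is a bookkeeping consequence of estimates already in hand, and the only thing to be slightly careful about is that \eqref{eq:basictheo1} and \eqref{eq:basictheo3} hold for all $t$ simultaneously with the same $x_1$, $x_2$ (which they do, as stated), so no limiting argument is needed. The one point worth a sentence is the attainment of the extrema, which follows from compactness of $\Gamma$ together with continuity of $\phi$ and of $(\Scal(t)\phi)(\cdot)$.
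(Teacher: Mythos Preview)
Your proposal is correct and matches the paper's own justification: the text immediately preceding \cref{basicbounds} states that the bounds are ``a simple consequence of~\eqref{eq:basictheo1} and~\eqref{eq:basictheo3}'', which is exactly the argument you spell out. The extra remark on attainment of the extrema is harmless but not needed for the statement as written.
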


    Next we provide a different approach to the approximation of the critical value. The main difference between \cref{basictheo} and the \namecref{itertheo} below is that the former provides an estimate for the rate of convergence of $(\phi-\Scal(t)\phi)/t$ to $c$ as $t$ positively diverges, while the latter exploits the convergence of $\Scal(t)\phi+ct$ to a solution to \hrefc{} to define two sequences converging monotonically to $c$.

    \begin{theo}\label{itertheo}
        Given $\phi\in C(\Gamma)$ and $T>0$, we recursively set $\phi_0\coloneqq\phi$, $\phi_{k+1}\coloneqq\Scal(T)\phi_k$,
        \begin{equation*}
            \wtc_k\coloneqq\max_{x\in\Gamma}\frac{\phi_{k-1}(x)-\phi_k(x)}T\qquad \text{and}\qquad\utc_k\coloneqq\min_{x\in\Gamma}\frac{\phi_{k-1}(x)-\phi_k(x)}T.
        \end{equation*}
        The sequences $\{\wtc_k\}_{k\in\Nds}$ and $\{\utc_k\}_{k\in\Nds}$ are nonincreasing and nondecreasing, respectively, and both converge to $c$.
    \end{theo}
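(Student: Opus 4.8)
The plan is to combine the algebraic properties of the semigroup $\{\Scal(t)\}$ collected in \cref{Scalprop} with the large time behaviour \cref{ltb}. First I would record that, by the semigroup property, $\phi_k=\Scal(kT)\phi$, so that $\wtc_k$ and $\utc_k$ are the normalized maximum and minimum over $\Gamma$ of $\phi_{k-1}-\phi_k$, and then split the proof into monotonicity, a priori bounds, and convergence to $c$.

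Monotonicity should follow from using the monotonicity property of $\Scal(T)$ together with the identity $\Scal(T)(\psi+a)=\Scal(T)\psi+a$ for constants $a$: from $\phi_{k-1}\le\phi_k+T\wtc_k$ on $\Gamma$, applying $\Scal(T)$ gives $\phi_k\le\Scal(T)(\phi_k+T\wtc_k)=\phi_{k+1}+T\wtc_k$, whence $\phi_k-\phi_{k+1}\le T\wtc_k$ and therefore $\wtc_{k+1}\le\wtc_k$; reversing all the inequalities and replacing $\wtc$ by $\utc$ yields $\utc_{k+1}\ge\utc_k$. I would also observe, by running the argument in the proof of \cref{basictheo} with $\phi_{k-1}$ as initial datum, that the analogues of \eqref{eq:basictheo1} and \eqref{eq:basictheo3} produce points of $\Gamma$ at which $(\phi_{k-1}-\phi_k)/T$ is, respectively, $\ge c$ and $\le c$; hence $\utc_k\le c\le\wtc_k$ for every $k$, and together with monotonicity this already gives convergence of both sequences to limits lying in $[\utc_1,\wtc_1]$.

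The crux is to identify those limits with $c$. For this I would set $u_k\coloneqq\phi_k+ckT=\Scal(kT)\phi+ckT$ and invoke \cref{ltb}, which says precisely that $u_k$ converges uniformly on $\Gamma$ to the function $u$ of \eqref{eq:ltb.1}; in particular $u_{k-1}-u_k\to0$ uniformly. Since $\phi_{k-1}-\phi_k=(u_{k-1}-u_k)+cT$, we get $T\wtc_k=\max_{x\in\Gamma}\bigl(u_{k-1}(x)-u_k(x)\bigr)+cT$ and $T\utc_k=\min_{x\in\Gamma}\bigl(u_{k-1}(x)-u_k(x)\bigr)+cT$; as $f\mapsto\max_\Gamma f$ and $f\mapsto\min_\Gamma f$ are $1$-Lipschitz for the uniform norm, both right-hand sides tend to $cT$. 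This gives $\wtc_k\to c$ and $\utc_k\to c$, concluding the proof.

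The only genuinely delicate point is this last one: monotonicity together with the a priori brackets $\utc_k\le c\le\wtc_k$ does \emph{not} by itself force the common limit to equal $c$, so the argument must really use the full uniform convergence in \cref{ltb} rather than the cruder estimates behind \cref{basicbounds}. Everything else is a direct manipulation of the stated properties of the semigroup, so I do not anticipate any further obstacle.
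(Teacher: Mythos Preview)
Your argument is correct. The convergence step is identical to the paper's: both rewrite $\phi_{k-1}-\phi_k$ as $(u_{k-1}-u_k)+cT$ with $u_k\coloneqq\Scal(kT)\phi+ckT$ and then invoke the uniform convergence of \cref{ltb}. The monotonicity step, however, is handled differently. The paper argues pointwise through the Lax--Oleinik representation~\eqref{eq:vft}: it picks a maximizer $x_1$ of $\phi_k-\phi_{k+1}$, an optimal curve $\xi_1$ for $(\Scal(T)\phi_k)(x_1)$ with starting point $y_1$, and uses that the same curve is merely admissible for $(\Scal(T)\phi_{k-1})(x_1)$ to bound $\wtc_{k+1}T\le\phi_{k-1}(y_1)-\phi_k(y_1)\le\wtc_kT$. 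Your route is more abstract: from $\phi_{k-1}\le\phi_k+T\wtc_k$ you apply only items~(ii) and~(iii) of \cref{Scalprop}. This is shorter and shows that monotonicity holds for any order-preserving, constant-commuting semigroup, without ever touching the variational formula; the paper's approach, on the other hand, makes the underlying mechanism (transport along optimal curves) visible. Your additional observation $\utc_k\le c\le\wtc_k$ is not in the paper's proof (it is stated only afterwards as a consequence) and, as you correctly note, is not needed for the convergence argument.
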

    \begin{proof}
        Fixed $k\ge1$, let $x_1,y_1\in\Gamma$ and $\xi_1:[0,(k+1)T]\to\Gamma$ be such that
        \begin{equation*}
            \max_{x\in\Gamma}(\phi_k(x)-\phi_{k+1}(x))=\phi_k(x_1)-\phi_{k+1}(x_1)=(\Scal(T)\phi_{k-1})(x_1)-(\Scal(T)\phi_k)(x_1)
        \end{equation*}
        and
        \begin{equation*}
            (\Scal(T)\phi_k)(x_1)=\phi_k(y_1)+\int_0^T L\mleft(\xi_1,\dot\xi_1\mright)\drm\tau.
        \end{equation*}
        Then
        \begin{equation}\label{eq:itertheo1}
            \begin{aligned}
                \wtc_{k+1}T=\,&\max_{x\in\Gamma}(\phi_k(x)-\phi_{k+1}(x))\\
                \le\,&\phi_{k-1}(y_1)+\int_0^T L\mleft(\xi_1,\dot\xi_1\mright)\drm\tau-\phi_k(y_1)-\int_0^T L\mleft(\xi_1,\dot\xi_1\mright)\drm\tau\le\wtc_k T.
            \end{aligned}
        \end{equation}
        Similarly, we set $x_2,y_2\in\Gamma$ and $\xi_2:[0,(k+1)T]\to\Gamma$ such that
        \begin{equation*}
            \min_{x\in\Gamma}(\phi_k(x)-\phi_{k+1}(x))=\phi_k(x_2)-\phi_{k+1}(x_2)=(\Scal(T)\phi_{k-1})(x_2)-(\Scal(T)\phi_k)(x_2)
        \end{equation*}
        and
        \begin{equation*}
            (\Scal(T)\phi_{k-1})(x_2)=\phi_{k-1}(y_2)+\int_0^T L\mleft(\xi_2,\dot\xi_2\mright)\drm\tau.
        \end{equation*}
        Hence,
        \begin{equation}\label{eq:itertheo2}
            \begin{aligned}
                \utc_{k+1}T=\,&\min_{x\in\Gamma}(\phi_k(x)-\phi_{k+1}(x))\\
                \ge\,&\phi_{k-1}(y_2)+\int_0^T L\mleft(\xi_2,\dot\xi_2\mright)\drm\tau-\phi_k(y_2)-\int_0^T L\mleft(\xi_2,\dot\xi_2\mright)\drm\tau\ge\utc_k T.
            \end{aligned}
        \end{equation}
        Inequalities~\eqref{eq:itertheo1} and~\eqref{eq:itertheo2} show that the sequences $\{\wtc_k\}$ and $\{\utc_k\}$ are nonincreasing and nondecreasing, respectively. Finally, we exploit \cref{ltb} to obtain
        \begin{align*}
            \lim_{k\to\infty}\wtc_k=\;&\lim_{k\to\infty}\max_{x\in\Gamma}\frac{\phi_{k-1}(x)-\phi_k(x)}T\\
            =\;&\lim_{k\to\infty}\max_{x\in\Gamma}\frac{(\Scal((k-1)T)\phi)(x)+c(k-1)T-(\Scal(kT)\phi)(x)-ckT}T+c=c,
        \end{align*}
        i.e., $\{\wtc_k\}$ converges to $c$. Similarly, we can prove that $\{\utc_k\}$ converges to $c$.
    \end{proof}

    Following \cref{itertheo}, we get that $\utc_k\le c\le\wtc_k$ for any $k\in\Nds$, which implies the next result.

    \begin{cor}\label{iterapprox}
        Under the same assumptions of \cref{itertheo}, we define for every $k\in\Nds$
        \begin{equation}\label{eq:iterapprox.1}
            c_k\coloneqq\frac{\wtc_k+\utc_k}2\qquad \text{and}\qquad\varepsilon_k\coloneqq\frac{\wtc_k-\utc_k}2.
        \end{equation}
        We have that $|c_k-c|\le\varepsilon_k$ for any $k\in\Nds$. In particular $\{\varepsilon_k\}_{k\in\Nds}$ is a nonincreasing infinitesimal sequence and $c_k$ converges to $c$.
    \end{cor}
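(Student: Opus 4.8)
The plan is to extract the statement as a purely formal consequence of \cref{itertheo} together with the interlacing $\utc_k\le c\le\wtc_k$ recorded just above: the value $c$ lies in the interval $[\utc_k,\wtc_k]$, whose midpoint is $c_k$ and whose half-length is $\varepsilon_k$, and this interval shrinks to $\{c\}$.

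First I would prove the error bound. From $c_k-c=\tfrac12(\wtc_k+\utc_k)-c$ and $c\ge\utc_k$ one gets $c_k-c\le\tfrac12(\wtc_k-\utc_k)=\varepsilon_k$, while $c\le\wtc_k$ gives $c_k-c\ge-\varepsilon_k$; together these yield $|c_k-c|\le\varepsilon_k$. The same two inequalities also show $\varepsilon_k\ge0$. Next, monotonicity of $\{\varepsilon_k\}$ follows by subtracting the inequalities $\utc_{k+1}\ge\utc_k$ and $\wtc_{k+1}\le\wtc_k$ supplied by \cref{itertheo}, which give $\wtc_{k+1}-\utc_{k+1}\le\wtc_k-\utc_k$, i.e.\ $\varepsilon_{k+1}\le\varepsilon_k$. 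Finally, \cref{itertheo} gives $\wtc_k\to c$ and $\utc_k\to c$, whence $\varepsilon_k=\tfrac12(\wtc_k-\utc_k)\to0$, and then $c_k\to c$ either from $|c_k-c|\le\varepsilon_k$ or directly since $c_k=\tfrac12(\wtc_k+\utc_k)\to c$.

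There is no real obstacle: everything is immediate once \cref{itertheo} and the bound $\utc_k\le c\le\wtc_k$ are in hand. The only point requiring mild care is keeping track of which of $\{\wtc_k\}$, $\{\utc_k\}$ is nonincreasing and which is nondecreasing, so that the signs in the estimate for $c_k-c$ are assigned correctly.
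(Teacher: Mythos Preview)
Your proposal is correct and matches the paper's treatment exactly: the paper does not even spell out a proof, simply noting that the interlacing $\utc_k\le c\le\wtc_k$ together with \cref{itertheo} immediately gives the result, which is precisely the argument you wrote out.
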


    \section{Algorithms for the Critical Value}\label{algosec}

    In this \lcnamecref{algosec} we present two algorithms for the approximation of the critical value.

    \subsection{Numerical Approximation of Time-Dependent HJ Equations}

    Our algorithms require a method to approximate solutions to~\eqref{eq:globteik}, since they rely on the large time behavior of said solutions.

    We employ the semi-Lagrangian scheme presented in~\cite{CarliniSiconolfi23} to approximate the solutions to~\eqref{eq:globteik}. Rather than directly discretizing equation~\eqref{eq:globteik}, the scheme addresses a modified problem that involves modified Hamiltonians $\wtH_\gamma$ obtained from the $H_\gamma$ through the procedure described in~\cite[Appendix A]{CarliniSiconolfi23}. This construction depends on the preliminary choice of a compact interval $I$ of the momentum variable $\mu$ within which the modified Hamiltonians coincide with the original ones:
    \begin{equation*}
        \wtH_\gamma(s,\mu)=H_\gamma(s,\mu),\qquad \text{for every }\gamma\in\Ecal,\,s\in[0,|\gamma|],\,\mu\in I.
    \end{equation*}
    The key advantage of this approach is the existence of a positive constant $\beta_0$, depending on $I$, such that the modified Lagrangians
    \begin{equation*}
        \widetilde L_\gamma(s,\lambda)\coloneqq\max_{\mu\in\Rds}\mleft(\lambda\mu-\wtH_\gamma(s,\mu)\mright),\qquad \text{for }\gamma\in\Ecal,\,s\in [0,|\gamma|],\,\lambda\in\Rds,
    \end{equation*}
    are equal to $+\infty$ when $\lambda$ is outside $[-\beta_0,\beta_0]$, for all $s\in[0,|\gamma|]$, $\gamma\in\Ecal$, and are Lipschitz continuous in $[-\beta_0,\beta_0]$. Setting $\wtHcal\coloneqq\{\wtH_\gamma\}_{\gamma\in\Ecal}$, the modified problem is
    \begin{equation}\tag{$\wtHcal$Jmod}\label{eq:globteikmod}
        \partial_t u(x,t)+\wtHcal(x,D_x u)=0,\qquad \text{on }\Gamma\times(0,\infty).
    \end{equation}
    This reformulation is justified by the fact that the solution to~\eqref{eq:globteikmod} coincides with the solution to~\eqref{eq:globteik} (see~\cite{CarliniSiconolfi23}).

    The scheme is based on a space-time discretization defined by a final time $T>0$ and a pair $\Delta\coloneqq(\Delta x,\Delta t)$ such that
    \begin{equation*}
        0<\Delta x<|\gamma|,\quad \text{for every }\gamma\in\Ecal,\qquad \text{and}\qquad0<\Delta t<T.
    \end{equation*}
    To define the space-time grid, we fix an \emph{orientation} $\Ecal^+$, i.e., a subset of $\Ecal$ containing exactly one arc in each pair $\{\gamma,\wtgamma\}$, and set
    \begin{equation*}
        N^\Delta_\gamma\coloneqq\mleft\lceil\frac{|\gamma|}{\Delta x}\mright\rceil,\quad \text{for }\gamma\in\Ecal^+,\qquad \text{and}\qquad N^\Delta_T\coloneqq\mleft\lceil\frac T{\Delta t}\mright\rceil,
    \end{equation*}
    where $\lceil\cdot\rceil$ stands for the ceiling function. We define
    \begin{equation*}
        s^{\Delta,\gamma}_i\coloneqq i\frac{|\gamma|}{N^\Delta_\gamma},\quad \text{for }i\in\mleft\{0,\dotsc,N_\gamma^\Delta\mright\},\,\gamma\in\Ecal^+,\qquad t^\Delta_m\coloneqq m\frac T{N^\Delta_T},\quad \text{for }m\in\mleft\{0,\dotsc,N_T^\Delta\mright\}.
    \end{equation*}
    We introduce the grids associated to any $\gamma \in \Ecal^+$ and the grid on the interval $[0,T]$
    \begin{equation*}
        \Sscr_{\Delta,\gamma}\coloneqq\mleft\{s_i^{\Delta,\gamma}:i\in\mleft\{0,\dotsc,N_\gamma^\Delta\mright\}\mright\},\quad \text{for }\gamma\in\Ecal^+,\qquad\Tscr_\Delta^T\coloneqq\mleft\{t^\Delta_m:m\in\mleft\{0,\dotsc,N^\Delta_T\mright\}\mright\},
    \end{equation*}
    and finally the grid on $\Gamma$ and the grid on $\Gamma \times [0,T]$
    \begin{equation*}
        \Gamma^0_\Delta\coloneqq\bigcup_{\gamma\in\Ecal^+}\gamma(\Sscr_{\Delta,\gamma}),\qquad\Gamma_\Delta^T\coloneqq\Gamma_\Delta^0\times\Tscr^T_\Delta.
    \end{equation*}
    We further set $\Delta_\gamma x\coloneqq\dfrac{|\gamma|}{N_\gamma^\Delta}$ and say that $\Delta\coloneqq(\Delta x,\Delta t)$ is an \emph{admissible pair} whenever
    \begin{equation}\label{eq:goodpair}
        0<\Delta x<|\gamma|,\quad \text{for every }\gamma\in\Ecal,\qquad0<\Delta t<T\qquad \text{and}\qquad\Delta t\le\min_{\gamma\in\Ecal^+}\frac{\Delta_\gamma x}{\beta_0}.
    \end{equation}
    We denote by $B\mleft(\Gamma^0_\Delta\mright)$ and $B(\Sscr_{_\Delta,\gamma})$, for $\gamma\in\Ecal^+$, the spaces of bounded functions from $\Gamma^0_\Delta $ and $\Sscr_{_\Delta,\gamma}$, respectively, to $\Rds$. Given an arc $\gamma\in\Ecal^+$ and $f\in B(\Sscr_{_\Delta,\gamma})$, we define the operator
    \begin{equation*}
        S_\gamma[f](s)\coloneqq\min_{\frac{s-|\gamma|}{\Delta t}\le\lambda\le\frac s{\Delta t}}\{I_\gamma[f](s-\Delta t\lambda)+\Delta t\widetilde L_\gamma(s,\lambda)\},\qquad \text{for }s\in\Sscr_{_\Delta,\gamma},
    \end{equation*}
    where $I_\gamma$ is the \emph{interpolating polynomial} of degree 1 defined by
    \begin{equation*}
        I_\gamma[f](s)\coloneqq f(s_i)+\frac{s-s_i}{s_{i+1}-s_i}(f(s_{i+1})-f(s_i)),
    \end{equation*}
    with $s_i,s_{i+1}\in\Sscr_{_\Delta,\gamma}$ and $s\in[s_i,s_{i+1}]$. To define the scheme, we extend $S_\gamma$ to $B\mleft(\Gamma^0_\Delta \mright)$ setting the map $S:B\mleft(\Gamma^0_\Delta \mright)\to B\mleft(\Gamma^0_\Delta\mright)$ such that
    \begin{mylist}
        \item if $x\in\Gamma^0_\Delta \setminus\Vbf$ let $\gamma\in\Ecal^+$ and $s\in\Sscr_{_\Delta,\gamma}$ be such that $x=\gamma(s)$, then
        \begin{equation*}
            S[f](x)\coloneqq S_\gamma[f\circ\gamma](s);
        \end{equation*}
        \item if $x\in\Vbf$ we set $S$ through a two steps procedure:
        \begin{align*}
            \wtS[f](x)\coloneqq\,&\min\mleft\{S_\gamma[f]\mleft(\gamma^{-1}(x)\mright): \text{$\gamma\in\Ecal^+$ with $\gamma(0)=x$ or $\gamma(|\gamma|)=x$}\mright\},\\
            S[f](x)\coloneqq\,&\min\mleft\{\wtS[f](x),f(x)-c_x\Delta t\mright\}.
        \end{align*}
    \end{mylist}
    Given a Lipschitz initial datum $\phi$, the scheme computes the approximate solution $v^\Delta: \Gamma^T_\Delta \to \Rds$ to the time-dependent problem in the following way:
    \begin{equation}\label{eq:SLscheme}\tag{Discr}
        \mleft\{
        \begin{aligned}
            v^\Delta(x,0)=\,&\phi (x),&& \text{if }x\in\Gamma^0_\Delta ,\\
            v^\Delta(x,t)=\,&S\mleft[v^\Delta(\cdot,t-\Delta t)\mright](x),&& \text{if }t\ne0,\,(x,t)\in\Gamma^T_\Delta.
        \end{aligned}
        \mright.
    \end{equation}
    The following error estimate holds for this numerical scheme:

    \begin{theo}\label{errCS}
        \emph{\cite[Corollary 4.14]{CarliniCoscettiPozza24}} Fixed a Lipschitz continuous initial datum $\phi$, $T>0$ and an admissible pair $\Delta$, let $v^\Delta$ be the approximation of $\Scal(t)\phi$ on $\Gamma^T_\Delta$ given by the semi-Lagrangian scheme~\eqref{eq:SLscheme}. There exists a constant $C>0$, independent of $\Delta$ and $T$, such that
        \begin{equation*}
            \mleft|v^\Delta(x,t)-(\Scal(t)\phi)(x)\mright|\le CT\mleft(\frac1{\sqrt{\Delta t}}\min\mleft\{\Delta x,\frac{\Delta x^2}{\Delta t}\mright\}+\sqrt{\Delta t}\mright),\qquad \text{for any }(x,t)\in\Gamma^T_\Delta.
        \end{equation*}
    \end{theo}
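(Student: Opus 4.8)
This is the network analogue of the classical $L^\infty$ error bounds for semi-Lagrangian discretizations of Hamilton--Jacobi equations, and the plan is to derive it by comparing the Lax--Oleinik representation~\eqref{eq:vft} of $\Scal(t)\phi$ with a discrete control representation of $v^\Delta$.

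First I would unfold the recursion~\eqref{eq:SLscheme}: iterating the definitions of $S$ and $\wtS$ shows that $v^\Delta\bigl(x,t^\Delta_m\bigr)$ equals the minimum, over all admissible discrete controlled trajectories — finite sequences of grid points $y_0,\dots,y_m\in\Gamma^0_\Delta$ with controls $\lambda_1,\dots,\lambda_m$ such that each step stays on a single arc, plus the option of ``stopping'' at a vertex $x$ at the price $c_x\Delta t$ — of the discrete action $\phi(y_0)+\Delta t\sum_{j=1}^m\widetilde L_{\gamma_j}(s_j,\lambda_j)$. Here the admissibility condition~\eqref{eq:goodpair}, specifically $\Delta t\le\min_{\gamma\in\Ecal^+}\Delta_\gamma x/\beta_0$, is exactly what prevents a single step from jumping over a grid cell, so that $I_\gamma$ is always evaluated inside one cell and $\widetilde L_\gamma$ only at arguments in $[-\beta_0,\beta_0]$, where it is Lipschitz. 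Since $\phi$ is Lipschitz, \cref{liptsol} together with the construction of the modified Hamiltonians lets me fix the compact interval $I$ (hence $\beta_0$) so large that $\widetilde L_\gamma$ coincides with $L_\gamma$ along every trajectory relevant for either representation; from now on I may therefore treat $L$ and $\widetilde L$ interchangeably and compare $v^\Delta$ directly with $\Scal(t)\phi$.

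Next I would prove the two one-sided estimates. For $v^\Delta\bigl(x,t^\Delta_m\bigr)\le\bigl(\Scal(t^\Delta_m)\phi\bigr)(x)+E_\Delta$, take an optimal curve $\xi$ for $\bigl(\Scal(t^\Delta_m)\phi\bigr)(x)$, which is $\beta_0$-Lipschitz, and manufacture a discrete trajectory from it: on each time slab $[t^\Delta_{j-1},t^\Delta_j]$ replace $\xi$ by a grid-to-grid segment with endpoints within geodesic distance $O(\Delta x)$ of $\xi\bigl(t^\Delta_{j-1}\bigr)$ and $\xi\bigl(t^\Delta_j\bigr)$, using the $\wtS$/stopping option whenever $\xi$ crosses a vertex. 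The slab-by-slab error then splits into a \emph{time} part — replacing $\int_{t^\Delta_{j-1}}^{t^\Delta_j}L(\xi,\dot\xi)\,\drm\tau$ by $\Delta t\,L$ at the slab-averaged velocity via Jensen's inequality (convexity of $L$ in $\lambda$), then nudging that average to land on grid nodes — and an \emph{interpolation/space} part coming from $I_\gamma$ and from the Lipschitz dependence of $L$ on $s$ over a displacement $O(\Delta x+\beta_0\Delta t)$. Summing over the $m=\lceil T/\Delta t\rceil$ slabs yields the $O(\sqrt{\Delta t})$ contribution from the time part and the two regimes of $\tfrac1{\sqrt{\Delta t}}\min\{\Delta x,\Delta x^2/\Delta t\}$ from the space part — the crude Lipschitz bound $O(\Delta x)$ per unit length, sharpened to $O(\Delta x^2/\Delta t)$ when $\Delta t$ dominates, via a finer averaging argument exploiting convexity (semiconcavity of the flow). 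The reverse inequality is symmetric: from a discrete optimal trajectory I build a genuine curve in $\Gamma$, piecewise affine in arc length, with the stopping cost $c_x\Delta t$ matched by $L(x,0)=-c_x$, and estimate the reverse of the same two contributions.

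The principal obstacle, as always for schemes on networks, is the vertex update: one must check that taking the minimum of $\wtS[f](x)$ over the arcs incident to $x$ and of $f(x)-c_x\Delta t$ is consistent — in \emph{both} directions — with the global Lagrangian $L$ glued through the flux limiter, in particular with $L(x,0)=-c_x$ and with the minimum over incident arcs in the definition of $L$ at $\Vbf$. Concretely this means showing that a curve passing through a vertex can be shadowed by a nearby discrete trajectory at a cost $O(\Delta x+\Delta t)$ per crossing, and that the number of crossings up to time $T$ is controlled (again a consequence of the Lipschitz bound $\beta_0$ and~\eqref{eq:goodpair}). The second, more quantitative difficulty is extracting the sharp $\sqrt{\Delta t}$ rate together with the $1/\sqrt{\Delta t}$ factor rather than the naive $O(\Delta t)$: this is the step where the convexity of $L$ and a precise bookkeeping of how tightly discrete trajectories track continuous ones — the network version of a Falcone--Ferretti type analysis — must be carried out, and it is there that the Lipschitz regularity of $\widetilde L_\gamma$ on $[-\beta_0,\beta_0]$ and the admissibility of $\Delta$ enter in an essential way.
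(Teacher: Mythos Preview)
The paper does not prove this theorem: it is quoted verbatim as \cite[Corollary~4.14]{CarliniCoscettiPozza24} and used as a black box in the subsequent error analysis of \cref{basicalgo,iteralgo}. There is therefore no proof in the present paper to compare your proposal against.

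That said, your outline follows the standard template for semi-Lagrangian error estimates (discrete control representation, two one-sided bounds via mutual shadowing of continuous and discrete trajectories, special care at the vertices), and this is indeed the spirit of the analysis in the cited reference. One point worth flagging: the way you describe obtaining the $\sqrt{\Delta t}$ rate --- summing per-slab errors over $T/\Delta t$ slabs --- would naively give $O(T\,\Delta t)$, not $O(T\sqrt{\Delta t})$. In the classical Euclidean setting the half-order rate comes either from a Crandall--Lions doubling-of-variables argument at the PDE level, or from an explicit inf/sup-convolution regularization of the (merely Lipschitz) value function, with the regularization parameter balanced against $\Delta t$; the trajectory picture alone does not deliver it. Your parenthetical ``semiconcavity of the flow'' suggests you are aware of this, but in a full proof that step would need to be made precise, and on a network the regularization must be performed arc by arc and reconciled at the vertices, which is where most of the technical work in \cite{CarliniCoscettiPozza24} actually lies.
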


    \begin{rem}
        We stress out that, in principle, our algorithms could employ any other numerical method for the approximation of solutions to~\eqref{eq:globteik}. In view of~\cite[Theorem 3.4]{CarliniCoscettiPozza24}, which proves that our definition of solution is equivalent to the one proposed in~\cite{ImbertMonneau17}, we could even choose the finite difference schemes proposed in~\cite{CamilliCarliniMarchi18,CostesequeLebacqueMonneau14} or the semi-Lagrangian method outlined in~\cite{CarliniFestaForcadel20}.
    \end{rem}

    \subsection{An Algorithm with an A Priori Error Estimate}

    Using an approach employed in~\cite{AchdouCamilliCapuzzo-Dolcetta08} for equations posed on the $N$-dimensional torus, we obtain an algorithm complete with an error estimate which proves the convergence of the scheme. The main idea behind this algorithm is to exploit the estimates given in \cref{basictheo,basicbounds}.

    \Cref{basicalgo} consists of two moduli: $\Mcal_1$ and $\Mcal_2$. $\Mcal_1$ is the scheme~\eqref{eq:SLscheme}, takes as inputs a Lipschitz continuous initial datum $\phi$, a final time $T>0$ and an admissible pair $\Delta\coloneqq(\Delta x,\Delta t)$, and gives as output an approximation $v^\Delta$ of the exact solution to~\eqref{eq:globteik} with initial datum $\phi$, presented at the start of this \lcnamecref{algosec}, at time $T$:
    \begin{equation*}
        \Mcal_1:(\phi,T,\Delta)\longmapsto v^\Delta(\cdot,T).
    \end{equation*}
    $\Mcal_2$ is a control modulus on the outputs of $\Mcal_1$. For $f$, $g$ defined on the grid $\Gamma_\Delta^0$ and $A>0$,
    \begin{equation*}
        \Mcal_2:(f,g,A,\Delta x)\longmapsto\mleft(\min_{x\in\Gamma_\Delta^0}\frac{f(x)-g(x)}A,\max_{x\in\Gamma_\Delta^0}\frac{f(x)-g(x)}A\mright).
    \end{equation*}
    Fixed a Lipschitz continuous initial value $\phi$, a time $T>0$ and an admissible pair $\Delta$, these modules are used to define, at each iteration of the algorithm,
    \begin{equation*}
        \ovc^\Delta_k\coloneqq\max_{x\in\Gamma_\Delta^0}\frac{\phi(x)-v^\Delta(x,kT)}{kT}\qquad \text{and}\qquad\unc^\Delta_k\coloneqq\min_{x\in\Gamma_\Delta^0}\frac{\phi(x)-v^\Delta(x,kT)}{kT},
    \end{equation*}
    where $k$ is the number of iterations. The algorithm stops when $\mleft(\ovc^\Delta_k-\unc^\Delta_k\mright)/2$ is smaller than a given tolerance and returns $\mleft(\ovc^\Delta_k+\unc^\Delta_k\mright)/2$ as approximation of the critical value $c$. We point out that $\Delta t$ defines the time step used internally by $\Mcal_1$, while $T$ defines the time step used by our method at each step.

    \begin{algorithm}
        \caption{Algorithm for the critical value with an a priori error estimate}\label{basicalgo}
        \begin{algorithmic}[1]
            \Require{a Lipschitz continuous initial value $\phi$, a time $T>0$, an admissible pair $\Delta$ and a tolerance $\varepsilon>0$}
            \Procedure{Critical Value}{$\phi,T,\Delta,\varepsilon$}
                \State{$k\gets0$}
                \Repeat
                    \State{$k\gets k+1$}
                    \State{$v^\Delta(\cdot,kT)\gets\Mcal_1(\phi,kT,\Delta)$}
                    \State{use $\Mcal_2$ to get
                        \begin{equation*}
                            \ovc^\Delta_k\gets\max_{x\in\Gamma_\Delta^0}\frac{\phi(x)-v^\Delta(x,kT)}{kT}\quad \text{and}\quad\unc^\Delta_k\gets\min_{x\in\Gamma_\Delta^0}\frac{\phi(x)-v^\Delta(x,kT)}{kT}
                        \end{equation*}
                    }
                \Until{$\ovc^\Delta_k-\unc^\Delta_k<2\varepsilon$}\label{alg:basicalgo1}
                \State{\Return $\dfrac{\ovc^\Delta_k+\unc^\Delta_k}2$}
            \EndProcedure
        \end{algorithmic}
    \end{algorithm}

    \begin{rem}\label{basicimprov}
        In view of \cref{basicbounds} and~\eqref{eq:critvaldef}, \cref{basicalgo} can be improved by taking
        \begin{equation*}
            \ovc^\Delta_1\coloneqq\max_{x\in\Gamma_\Delta^0}\frac{\phi(x)-v^\Delta(x,T)}T,\qquad\unc^\Delta_1\coloneqq\max\mleft\{a_0,\min_{x\in\Gamma_\Delta^0}\frac{\phi(x)-v^\Delta(x,T)}T\mright\}
        \end{equation*}
        and, for $k>1$,
        \begin{equation*}
            \ovc^\Delta_k\coloneqq\min\mleft\{\ovc^\Delta_{k-1},\max_{x\in\Gamma_\Delta^0}\frac{\phi(x)-v^\Delta(x,kT)}{kT}\mright\},\qquad\unc^\Delta_k\coloneqq\max\mleft\{\unc^\Delta_{k-1},\min_{x\in\Gamma_\Delta^0}\frac{\phi(x)-v^\Delta(x,kT)}{kT}\mright\}.
        \end{equation*}
    \end{rem}

    \begin{theo}\label{basicerr}
        Given a Lipschitz continuous initial datum $\phi$, a $T>0$ and an admissible pair $\Delta$, let $v^\Delta(\cdot,kT)\coloneqq\Mcal_1(\phi,kT,\Delta)$ for $k\in\Nds$. We recursively set, for $k\ge1$
        \begin{equation*}
            \ovc^\Delta_k\coloneqq\max_{x\in\Gamma_\Delta^0}\frac{\phi(x)-v^\Delta(x,kT)}{kT},\qquad\unc^\Delta_k\coloneqq\min_{x\in\Gamma_\Delta^0}\frac{\phi(x)-v^\Delta(x,kT)}{kT}.
        \end{equation*}
        We have that
        \begin{equation*}
            \mleft|\frac{\ovc^\Delta_k+\unc^\Delta_k}2-c\mright|\le C_0\mleft(\frac{1+\Delta x}{kT}+\frac1{\sqrt{\Delta t}}\min\mleft\{\Delta x,\frac{\Delta x^2}{\Delta t}\mright\}+\sqrt{\Delta t}\mright),\qquad \text{for any }k\ge1,
        \end{equation*}
        where $C_0>0$ is a constant independent of $\Delta$ and $k$.
    \end{theo}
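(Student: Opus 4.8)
The plan is to combine the a~priori estimate of \cref{basictheo} for the exact semigroup with the numerical error estimate of \cref{errCS}, applied with final time $kT$ in place of $T$.  First I would write, for each $k\ge1$ and each $x\in\Gamma^0_\Delta$,
\begin{equation*}
    \frac{\phi(x)-v^\Delta(x,kT)}{kT}=\frac{\phi(x)-(\Scal(kT)\phi)(x)}{kT}+\frac{(\Scal(kT)\phi)(x)-v^\Delta(x,kT)}{kT}.
\end{equation*}
By \cref{errCS} the second term is bounded in absolute value by $C\bigl(\tfrac1{\sqrt{\Delta t}}\min\{\Delta x,\Delta x^2/\Delta t\}+\sqrt{\Delta t}\bigr)$ uniformly in $x$ (the factor $kT$ from the estimate cancels the $kT$ in the denominator).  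By \cref{basictheo} the first term lies within $2\ell\diam(\Gamma)/(kT)$ of $c$.  Hence each ratio $\bigl(\phi(x)-v^\Delta(x,kT)\bigr)/(kT)$ lies within $2\ell\diam(\Gamma)/(kT)+C\bigl(\tfrac1{\sqrt{\Delta t}}\min\{\Delta x,\Delta x^2/\Delta t\}+\sqrt{\Delta t}\bigr)$ of $c$.

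Taking the maximum and the minimum over $x\in\Gamma^0_\Delta$ (a finite set, so both are attained) preserves this two–sided bound, so $\ovc^\Delta_k$ and $\unc^\Delta_k$ each lie in the interval $[c-\eta_k,c+\eta_k]$ with $\eta_k\coloneqq 2\ell\diam(\Gamma)/(kT)+C\bigl(\tfrac1{\sqrt{\Delta t}}\min\{\Delta x,\Delta x^2/\Delta t\}+\sqrt{\Delta t}\bigr)$.  Consequently their average also lies in that interval, giving
\begin{equation*}
    \mleft|\frac{\ovc^\Delta_k+\unc^\Delta_k}2-c\mright|\le\eta_k.
\end{equation*}
To match the stated form I would absorb $2\ell\diam(\Gamma)$ and $C$ into a single constant $C_0$ depending only on $\Gamma$, $\ell$ (hence on $\phi$), and the constant $C$ of \cref{errCS} — all independent of $\Delta$ and $k$ — and bound $1/(kT)\le(1+\Delta x)/(kT)$ trivially (the $\Delta x$ summand in the numerator is only there to unify notation and is harmless).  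This yields the claimed inequality.

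The only genuinely delicate point is the role of $\diam(\Gamma)$ versus the discrete diameter: \cref{basictheo} is stated for the \emph{exact} solution on all of $\Gamma$, and one should note that the points $x_1,x_2\in\Gamma$ produced in its proof need not lie on the grid $\Gamma^0_\Delta$.  This is not actually an obstacle, because \cref{basictheo} gives the bound $\bigl|\tfrac{\phi(x)-(\Scal(t)\phi)(x)}t-c\bigr|\le 2\ell\diam(\Gamma)/t$ \emph{pointwise for every} $x\in\Gamma$, in particular for every grid point; so one applies it at grid points only and no compatibility between the grid and the optimizers $x_1,x_2$ is needed.  Everything else is bookkeeping: splitting the quotient, invoking the two cited estimates, and passing to $\max$/$\min$/average, which I would present in that order.
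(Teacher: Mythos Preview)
Your argument is correct and in fact slightly more direct than the paper's.  The paper does not apply \cref{basictheo} pointwise at grid points; instead it first compares the discrete extremum $\max_{x\in\Gamma^0_\Delta}(\phi(x)-(\Scal(kT)\phi)(x))$ to the continuous one $\max_{x\in\Gamma}(\phi(x)-(\Scal(kT)\phi)(x))$ via the Lipschitz constant $\ell$ and the grid spacing (picking a nearest grid point to the continuous maximizer), which is the origin of the $\ell\Delta x/(kT)$ term, and only then invokes \cref{basictheo} on the continuous max and min.  Your route bypasses this comparison entirely by using the pointwise estimate of \cref{basictheo} at grid points, so the $\Delta x/(kT)$ contribution never appears and you recover it only through the trivial inequality $1\le 1+\Delta x$.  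Both approaches combine the same two ingredients (\cref{basictheo} and \cref{errCS}); yours is a shade sharper and avoids the auxiliary Lipschitz step, while the paper's detour through the continuous extrema is what makes the $\Delta x$ in the numerator a genuine (if minor) part of the bound rather than a cosmetic addition.
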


    \Cref{basicerr} proves that the approximation given by \cref{basicalgo} converges to the critical value as $k$ diverges and, if~\eqref{eq:goodpair} holds,
    \begin{equation*}
        \min\mleft\{\frac{\Delta x}{\sqrt{\Delta t}},\frac{\Delta x^2}{\Delta t^{\frac32}}\mright\}\longrightarrow0.
    \end{equation*}

    \begin{proof}[Proof of \cref{basicerr}]
        Fixed $k\ge1$, we set
        \begin{equation*}
            \ovx\coloneqq\amax_{x\in\Gamma}\phi(x)-(\Scal(kT)\phi)(x),\qquad\ovy\coloneqq\amin_{y\in\Gamma_\Delta^0}|y-\ovx|.
        \end{equation*}
        Clearly $|\ovx-\ovy|\le\dfrac{\Delta x}2$, thereby
        \begin{equation}\label{eq:basicerr1}
            \begin{aligned}
                0\le\,&\max_{x\in\Gamma}(\phi(x)-(\Scal(kT)\phi)(x))-\max_{y\in\Gamma_\Delta^0}(\phi(y)-(\Scal(kT)\phi)(y))\\
                \le\,&\phi(\ovx)-(\Scal(kT)\phi)(\ovx)-\phi(\ovy)+(\Scal(kT)\phi)(\ovy)\le\ell\Delta x,
            \end{aligned}
        \end{equation}
        where $\ell$ is the Lipschitz constant of $(x,t)\mapsto(\Scal(t)\phi)(x)$. Then, by \cref{errCS}, there is $C>0$ independent of $\Delta$ and $k$ such that
        \begin{align*}
            \mleft|\ovc_k^\Delta-\max_{x\in\Gamma}\frac{\phi(x)-(\Scal(kT)\phi)(x)}{kT}\mright|\le\,&\frac{\ell\Delta x}{kT}+C\mleft(\frac1{\sqrt{\Delta t}}\min\mleft\{\Delta x,\frac{\Delta x^2}{\Delta t}\mright\}+\sqrt{\Delta t}\mright),\\
            \mleft|\unc_k^\Delta-\min_{x\in\Gamma}\frac{\phi(x)-(\Scal(kT)\phi)(x)}{kT}\mright|\le\,&\frac{\ell\Delta x}{kT}+C\mleft(\frac1{\sqrt{\Delta t}}\min\mleft\{\Delta x,\frac{\Delta x^2}{\Delta t}\mright\}+\sqrt{\Delta t}\mright),
        \end{align*}
        thus \cref{basictheo} yields
        \begin{align*}
            \mleft|\frac{\ovc^\Delta_k+\unc^\Delta_k}2-c\mright|\le&\mleft|\max_{x\in\Gamma}\frac{\phi(x)-(\Scal(kT)\phi)(x)}{2kT}+\min_{x\in\Gamma}\frac{\phi(x)-(\Scal(kT)\phi)(x)}{2kT}-c\mright|\\
            &+\frac{\ell\Delta x}{kT}+C\mleft(\frac1{\sqrt{\Delta t}}\min\mleft\{\Delta x,\frac{\Delta x^2}{\Delta t}\mright\}+\sqrt{\Delta t}\mright)\\
            \le&\,2\ell\diam(\Gamma)\frac1{kT}+\frac{\ell\Delta x}{kT}+C\mleft(\frac1{\sqrt{\Delta t}}\min\mleft\{\Delta x,\frac{\Delta x^2}{\Delta t}\mright\}+\sqrt{\Delta t}\mright).
        \end{align*}
        Since $k$ is arbitrary this concludes the proof.
    \end{proof}

    Finally, we know from \cref{basicbounds} that, for any $k\ge1$,
    \begin{multline*}
        \mleft|\max_{x\in\Gamma}\frac{\phi(x)-(\Scal(kT)\phi)(x)}{2kT}+\min_{x\in\Gamma}\frac{\phi(x)-(\Scal(kT)\phi)(x)}{2kT}-c\mright|\\
        \le\mleft|\max_{x\in\Gamma}\frac{\phi(x)-(\Scal(kT)\phi)(x)}{2kT}-\min_{x\in\Gamma}\frac{\phi(x)-(\Scal(kT)\phi)(x)}{2kT}\mright|,
    \end{multline*}
    thus, arguing as in the proof of \cref{basicerr}, we get the following \namecref{basicstop} which justifies the stopping criterion $\ovc^\Delta_k-\unc^\Delta_k<2\varepsilon$ at \cref{alg:basicalgo1} in \cref{basicalgo}.

    \begin{prop}\label{basicstop}
        Under the same assumptions of \cref{basicerr}, we have that
        \begin{equation*}
            \mleft|\frac{\ovc^\Delta_k+\unc^\Delta_k}2-c\mright|\le\frac{\ovc_k^\Delta-\unc_k^\Delta}2+C_0\mleft(\frac{\Delta x}{kT}+\frac1{\sqrt{\Delta t}}\min\mleft\{\Delta x,\frac{\Delta x^2}{\Delta t}\mright\}+\sqrt{\Delta t}\mright),\qquad \text{for any }k\ge1,
        \end{equation*}
        where $C_0>0$ is a constant independent of $\Delta$ and $k$.
    \end{prop}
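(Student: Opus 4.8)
The plan is to run the very same argument used in the proof of \cref{basicerr}, but to replace the appeal to \cref{basictheo} with the sharper, a posteriori bound coming from \cref{basicbounds} that is displayed immediately before this statement. Abbreviate
\[
M_k\coloneqq\max_{x\in\Gamma}\frac{\phi(x)-(\Scal(kT)\phi)(x)}{kT},\qquad m_k\coloneqq\min_{x\in\Gamma}\frac{\phi(x)-(\Scal(kT)\phi)(x)}{kT};
\]
then that displayed inequality reads $\bigl\lvert (M_k+m_k)/2-c\bigr\rvert\le (M_k-m_k)/2$, whereas \cref{basicerr} only exploited the weaker estimate $\bigl\lvert (M_k+m_k)/2-c\bigr\rvert\le 2\ell\diam(\Gamma)/(kT)$ provided by \cref{basictheo}.

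First I would recollect, verbatim from the proof of \cref{basicerr}, the two ingredients that control the passage from the continuous quantities $M_k,m_k$ to the discrete ones $\ovc^\Delta_k,\unc^\Delta_k$: the Lipschitz estimate~\eqref{eq:basicerr1}, costing at most $\ell\Delta x$ when one replaces a maximum (resp.\ minimum) over $\Gamma$ by the corresponding one over the grid $\Gamma_\Delta^0$, with $\ell$ the Lipschitz constant of $(x,t)\mapsto(\Scal(t)\phi)(x)$; and the error estimate of \cref{errCS} applied at time $t=kT$, costing at most $CkT\bigl(\tfrac1{\sqrt{\Delta t}}\min\{\Delta x,\Delta x^2/\Delta t\}+\sqrt{\Delta t}\bigr)$ when one replaces $\Scal(kT)\phi$ by $v^\Delta(\cdot,kT)$. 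Dividing by $kT$, these combine to give
\[
\bigl\lvert\ovc^\Delta_k-M_k\bigr\rvert,\ \bigl\lvert\unc^\Delta_k-m_k\bigr\rvert\ \le\ \frac{\ell\Delta x}{kT}+C\Bigl(\frac1{\sqrt{\Delta t}}\min\bigl\{\Delta x,\tfrac{\Delta x^2}{\Delta t}\bigr\}+\sqrt{\Delta t}\Bigr)\eqqcolon\eta_k .
\]

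Second I would assemble the bound with two applications of the triangle inequality. On the one hand $\bigl\lvert(\ovc^\Delta_k+\unc^\Delta_k)/2-(M_k+m_k)/2\bigr\rvert\le(\lvert\ovc^\Delta_k-M_k\rvert+\lvert\unc^\Delta_k-m_k\rvert)/2\le\eta_k$; on the other hand, writing $M_k-m_k=(M_k-\ovc^\Delta_k)+(\ovc^\Delta_k-\unc^\Delta_k)+(\unc^\Delta_k-m_k)$ yields $(M_k-m_k)/2\le(\ovc^\Delta_k-\unc^\Delta_k)/2+\eta_k$. Feeding these, together with $\bigl\lvert(M_k+m_k)/2-c\bigr\rvert\le(M_k-m_k)/2$ from \cref{basicbounds}, into
\[
\Bigl\lvert\frac{\ovc^\Delta_k+\unc^\Delta_k}2-c\Bigr\rvert\le\Bigl\lvert\frac{\ovc^\Delta_k+\unc^\Delta_k}2-\frac{M_k+m_k}2\Bigr\rvert+\Bigl\lvert\frac{M_k+m_k}2-c\Bigr\rvert
\]
gives $\bigl\lvert(\ovc^\Delta_k+\unc^\Delta_k)/2-c\bigr\rvert\le(\ovc^\Delta_k-\unc^\Delta_k)/2+2\eta_k$, and taking $C_0\coloneqq 2\max\{\ell,C\}$ produces exactly the stated inequality. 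I do not expect a genuine obstacle: the only points requiring a little care are that the prefactor $CT$ in \cref{errCS} becomes $CkT$ here, so that after dividing by $kT$ the resulting contribution to $\ovc^\Delta_k-M_k$ is independent of $k$ and $\Delta$, and that the $\diam(\Gamma)/(kT)$ term appearing in \cref{basicerr} does not resurface, having been traded for the a posteriori quantity $(\ovc^\Delta_k-\unc^\Delta_k)/2$, so that only the grid-resolution term $\Delta x/(kT)$ survives.
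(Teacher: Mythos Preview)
Your proposal is correct and follows essentially the same approach as the paper: the paper simply records the a posteriori inequality $\bigl\lvert(M_k+m_k)/2-c\bigr\rvert\le(M_k-m_k)/2$ coming from \cref{basicbounds} and then says ``arguing as in the proof of \cref{basicerr}'', which is precisely the discrete-to-continuous comparison via~\eqref{eq:basicerr1} and \cref{errCS} that you have written out in detail.
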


    \subsection{A Faster Iterative Algorithm}

    Next we describe a numerical scheme which exploit \cref{itertheo}, instead of \cref{basictheo}, to obtain an approximation of the critical value. While we do not have an a priori error estimate for this new algorithm, the simulations in \cref{simsec} show that it is considerably faster than \cref{basicalgo}.

    Following \cref{iterapprox}, $c_k$ defined in~\eqref{eq:iterapprox.1} is a reasonable approximation of $c$ for $k$ big enough, thus we propose a scheme, \cref{iteralgo}, which estimates the critical value using a numerical approximation of $c_k$. It consists of the same two moduli $\Mcal_1$ and $\Mcal_2$ used in \cref{basicalgo}. They are used to defining at each iteration, fixed a Lipschitz continuous initial datum $\phi$, a time $T>0$ and an admissible pair $\Delta$,
    \begin{equation*}
        \wtc^\Delta_k\coloneqq\max_{x\in\Gamma_\Delta^0}\frac{v^\Delta(x,(k-1)T)-v^\Delta(x,kT)}T\qquad \text{and}\qquad\utc^\Delta_k\coloneqq\min_{x\in\Gamma_\Delta^0}\frac{v^\Delta(x,(k-1)T)-v^\Delta(x,kT)}T,
    \end{equation*}
    where $k$ is the number of iterations. The algorithm stops when $\mleft(\wtc^\Delta_k-\utc^\Delta_k\mright)/2$ is smaller than a given tolerance and returns $\mleft(\wtc^\Delta_k+\utc^\Delta_k\mright)/2$ as approximation of the critical value $c$. We recall that $\Delta t$ defines the time step used internally by $\Mcal_1$, while $T$ defines the time step used by our method at each step.

    \begin{algorithm}
        \caption{A faster iterative algorithm for the critical value}\label{iteralgo}
        \begin{algorithmic}[1]
            \Require{a Lipschitz continuous initial value $\phi$, a time $T>0$, an admissible pair $\Delta$ and a tolerance $\varepsilon>0$}
            \Procedure{Critical Value}{$\phi,T,\Delta,\varepsilon$}
                \State{$k\gets0$}
                \Repeat
                    \State{$k\gets k+1$}
                    \State{$v^\Delta(\cdot,kT)\gets\Mcal_1(\phi,kT,\Delta)$}
                    \State{use $\Mcal_2$ to get
                        \begin{equation*}
                            \wtc^\Delta_k\gets\max_{x\in\Gamma_\Delta^0}\frac{v^\Delta(x,(k-1)T)-v^\Delta(x,kT)}T\quad \text{and}\quad\utc^\Delta_k\gets\min_{x\in\Gamma_\Delta^0}\frac{v^\Delta(x,(k-1)T)-v^\Delta(x,kT)}T
                        \end{equation*}
                    }
                \Until{$\wtc^\Delta_k-\utc^\Delta_k<2\varepsilon$}\label{alg:iteralgo1}
                \State{\Return $\dfrac{\wtc^\Delta_k+\utc^\Delta_k}2$}
            \EndProcedure
        \end{algorithmic}
    \end{algorithm}

    \begin{rem}\label{iterimprov}
        In view of \cref{itertheo} and~\eqref{eq:critvaldef}, \cref{iteralgo} can be improved by taking
        \begin{equation*}
            \wtc^\Delta_1\coloneqq\max_{x\in\Gamma_\Delta^0}\frac{\phi(x)-v^\Delta(x,T)}T,\qquad \utc^\Delta_1\coloneqq\max\mleft\{a_0,\min_{x\in\Gamma_\Delta^0}\frac{\phi(x)-v^\Delta(x,T)}T\mright\}
        \end{equation*}
        and, for $k>1$,
        \begin{align*}
            \wtc^\Delta_k&\coloneqq\min\mleft\{\wtc^\Delta_{k-1},\max_{x\in\Gamma_\Delta^0}\frac{v^\Delta(x,(k-1)T)-v^\Delta(x,kT)}{T}\mright\},\\
            \utc^\Delta_k&\coloneqq\max\mleft\{\utc^\Delta_{k-1},\min_{x\in\Gamma_\Delta^0}\frac{v^\Delta(x,(k-1)T)-v^\Delta(x,kT)}{T}\mright\}.
        \end{align*}
    \end{rem}

    The next \namecref{iterconv} shows the convergence of \cref{iteralgo}.

    \begin{theo}\label{iterconv}
        Given a Lipschitz continuous initial datum $\phi$, a $T>0$ and an admissible pair $\Delta$, let $v^\Delta(\cdot,kT)\coloneqq\Mcal_1(\phi,kT,\Delta)$ for $k\in\Nds$. Setting for $k\ge1$
        \begin{equation*}
            \wtc^\Delta_k\coloneqq\max_{x\in\Gamma_\Delta^0}\frac{v^\Delta(x,(k-1)T)-v^\Delta(x,kT)}T,\qquad\utc^\Delta_k\coloneqq\min_{x\in\Gamma_\Delta^0}\frac{v^\Delta(x,(k-1)T)-v^\Delta(x,kT)}T
        \end{equation*}
        and $c_k$ as in \cref{iterapprox}, we have that,
        \begin{equation}\label{eq:iterconv.1}
            \mleft|\frac{\wtc^\Delta_k+\utc^\Delta_k}2-c\mright| \to 0
        \end{equation}
        as $k\to \infty$ and $\Delta \to 0$ such that
        \begin{equation}\label{eq:iterconvexp}
            k\min\mleft\{\frac{\Delta x}{\sqrt{\Delta t}},\frac{\Delta x^2}{\Delta t^{\frac32}}\mright\}\longrightarrow0.
        \end{equation}
    \end{theo}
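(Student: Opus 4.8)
The plan is to reduce the convergence of the algorithmic approximation $(\wtc^\Delta_k+\utc^\Delta_k)/2$ to the critical value to two separate contributions: the convergence of the exact iterative quantities $(\wtc_k+\utc_k)/2$ to $c$, which is already guaranteed by \cref{itertheo} (equivalently, by \cref{iterapprox}, where $\varepsilon_k\to0$), and the numerical error introduced by replacing $\Scal(jT)\phi$ with $v^\Delta(\cdot,jT)$ for $j=k-1,k$, which is controlled by \cref{errCS}. So the first step is to write, for each $x\in\Gamma_\Delta^0$,
\begin{equation*}
    \mleft|\frac{v^\Delta(x,(k-1)T)-v^\Delta(x,kT)}T-\frac{(\Scal((k-1)T)\phi)(x)-(\Scal(kT)\phi)(x)}T\mright|\le\frac1T\mleft(\mleft|v^\Delta(x,(k-1)T)-(\Scal((k-1)T)\phi)(x)\mright|+\mleft|v^\Delta(x,kT)-(\Scal(kT)\phi)(x)\mright|\mright),
\end{equation*}
and then invoke \cref{errCS} to bound each term on the right by $C(k-1)T\bigl(\tfrac1{\sqrt{\Delta t}}\min\{\Delta x,\Delta x^2/\Delta t\}+\sqrt{\Delta t}\bigr)$ and $CkT\bigl(\cdots\bigr)$ respectively. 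Dividing by $T$ leaves a bound of order $Ck\bigl(\tfrac1{\sqrt{\Delta t}}\min\{\Delta x,\Delta x^2/\Delta t\}+\sqrt{\Delta t}\bigr)$; note the factor $k$ that appears precisely because the estimate in \cref{errCS} is linear in the final time $kT$.

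The second step is to pass from a pointwise bound to a bound on the max and min over the grid. Since for any two bounded functions $f,g$ on $\Gamma_\Delta^0$ one has $|\max_{\Gamma_\Delta^0}f-\max_{\Gamma_\Delta^0}g|\le\max_{\Gamma_\Delta^0}|f-g|$ and likewise for the min, the previous step immediately gives
\begin{equation*}
    \mleft|\wtc^\Delta_k-\max_{x\in\Gamma_\Delta^0}\frac{(\Scal((k-1)T)\phi)(x)-(\Scal(kT)\phi)(x)}T\mright|\le C_1\,k\mleft(\frac1{\sqrt{\Delta t}}\min\mleft\{\Delta x,\frac{\Delta x^2}{\Delta t}\mright\}+\sqrt{\Delta t}\mright)
\end{equation*}
and the analogous inequality for $\utc^\Delta_k$ with the min. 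Next I would relate $\max_{x\in\Gamma_\Delta^0}$ of the exact difference quotient to $\wtc_k=\max_{x\in\Gamma}$ of the same quantity: since $\Gamma_\Delta^0\subset\Gamma$ one direction is free, and the other follows from the Lipschitz continuity of $x\mapsto(\Scal(jT)\phi)(x)$ (\cref{liptsol}, or the Lipschitz bound already used in the proof of \cref{basicerr}) together with the fact that every point of $\Gamma$ lies within $\Delta x/2$ of a grid point; this costs an extra term of order $\ell\Delta x/T$ with $\ell$ the Lipschitz constant, which is dominated by the $k\Delta x$-type term under \cref{eq:iterconvexp} (indeed $\Delta x/T\le \text{const}\cdot k\,\Delta x/\sqrt{\Delta t}\cdot\sqrt{\Delta t}$, and more simply $\Delta x\to0$ in any regime where \cref{eq:iterconvexp} holds, since $\Delta x/\sqrt{\Delta t}\to0$ and $\Delta t$ is bounded via admissibility).

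The third step assembles everything with the triangle inequality:
\begin{equation*}
    \mleft|\frac{\wtc^\Delta_k+\utc^\Delta_k}2-c\mright|\le\mleft|\frac{\wtc_k+\utc_k}2-c\mright|+C_2\mleft(k\min\mleft\{\frac{\Delta x}{\sqrt{\Delta t}},\frac{\Delta x^2}{\Delta t^{\frac32}}\mright\}+k\sqrt{\Delta t}+\frac{\Delta x}T\mright).
\end{equation*}
The first term on the right tends to $0$ as $k\to\infty$ by \cref{itertheo}; the second tends to $0$ under hypothesis \cref{eq:iterconvexp} — here one uses that admissibility \cref{eq:goodpair} forces $\Delta t\le\min_\gamma\Delta_\gamma x/\beta_0$, so that $k\sqrt{\Delta t}$ is controlled by $k\,\Delta x/\sqrt{\Delta t}$ up to a constant, hence also tends to $0$, and similarly $\Delta x/T\to0$. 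This yields \cref{eq:iterconv.1}. The main subtlety, and the only place requiring care, is precisely the interplay between the factor $k$ coming from the linear-in-time error of \cref{errCS} and the simultaneous limit $\Delta\to0$: one must check that the hypothesis \cref{eq:iterconvexp} genuinely kills the product $k\cdot(\text{spatial error})$ and that, via admissibility, it also kills $k\sqrt{\Delta t}$; everything else is bookkeeping with triangle inequalities and the two already-established facts (\cref{itertheo} and \cref{errCS}). A minor point worth a sentence is that \cref{itertheo} is stated for the exact semigroup with continuous (not necessarily Lipschitz) data, so the $\wtc_k,\utc_k\to c$ input applies verbatim to our Lipschitz $\phi$.
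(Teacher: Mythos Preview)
Your proposal is correct and follows essentially the same route as the paper's proof: decompose the error into the exact large-time contribution $|(\wtc_k+\utc_k)/2-c|$ (handled by \cref{itertheo}/\cref{iterapprox}), the numerical error from \cref{errCS} (yielding the factor linear in $k$), and the grid-to-$\Gamma$ gap controlled by the Lipschitz estimate of \cref{liptsol} (giving the $\ell\Delta x/T$ term). The paper arrives at the bound $|c_k-c|+\tfrac{\ell\Delta x}T+C(2k+1)\bigl(\tfrac1{\sqrt{\Delta t}}\min\{\Delta x,\Delta x^2/\Delta t\}+\sqrt{\Delta t}\bigr)$ and then simply asserts that \cref{eq:goodpair} and \cref{eq:iterconvexp} yield the conclusion; you are actually more explicit than the paper in spelling out why admissibility forces $k\sqrt{\Delta t}\lesssim k\min\{\Delta x/\sqrt{\Delta t},\Delta x^2/\Delta t^{3/2}\}$.
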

    \begin{proof}
        We start fixing $k\ge1$. The same arguments used to prove~\eqref{eq:basicerr1} show that
        \begin{equation*}
            0\le\max_{x\in\Gamma}((\Scal((k-1)T)\phi)(x)-(\Scal(kT)\phi)(x))-\max_{y\in\Gamma_\Delta^0}((\Scal((k-1)T)\phi)(y)-(\Scal(kT)\phi)(y))\le\ell\Delta x,
        \end{equation*}
        where $\ell$ is the Lipschitz constant of $(x,t)\mapsto(\Scal(t)\phi)(x)$. Setting $\wtc_k$ and $\utc_k$ as in \cref{itertheo}, \cref{errCS} yields that there is a $C>0$ independent of $\Delta$ and $k$ such that
        \begin{equation*}
            \mleft|\frac{\wtc^\Delta_k+\utc^\Delta_k}2-\frac{\wtc_k+\utc_k}2\mright|\le\frac{\ell\Delta x}T+C(2k+1)\mleft(\frac1{\sqrt{\Delta t}}\min\mleft\{\Delta x,\frac{\Delta x^2}{\Delta t}\mright\}+\sqrt{\Delta t}\mright),
        \end{equation*}
        therefore
        \begin{equation*}
            \mleft|\frac{\wtc^\Delta_k+\utc^\Delta_k}2-c\mright|\le|c_k-c|+\frac{\ell\Delta x}T+C(2k+1)\mleft(\frac1{\sqrt{\Delta t}}\min\mleft\{\Delta x,\frac{\Delta x^2}{\Delta t}\mright\}+\sqrt{\Delta t}\mright).
        \end{equation*}
        This and \cref{iterapprox} imply~\eqref{eq:iterconv.1} whenever~\eqref{eq:goodpair} and~\eqref{eq:iterconvexp} hold.
    \end{proof}

    Adopting the same notation used in the proof of \cref{iterconv}, we notice that \cref{iterapprox} yields
    \begin{equation*}
        |c_k-c|\le\frac{\wtc_k-\utc_k}2\le\frac{\wtc^\Delta_k-\utc^\Delta_k}2+\frac{\ell\Delta x}T+C(2k+1)\mleft(\frac1{\sqrt{\Delta t}}\min\mleft\{\Delta x,\frac{\Delta x^2}{\Delta t}\mright\}+\sqrt{\Delta t}\mright).
    \end{equation*}
    This and \cref{iterconv} prove the following error estimate for \cref{iteralgo}.

    \begin{prop}\label{itererr}
        Under the same assumptions of \cref{iterconv}, we have that, for any $k\ge1$,
        \begin{equation}\label{eq:itererr.1}
            \mleft|\frac{\wtc^\Delta_k+\utc^\Delta_k}2-c\mright|\le\frac{\wtc^\Delta_k-\utc^\Delta_k}2+C_0\mleft(\frac{\Delta x}T+(k+1)\mleft(\frac1{\sqrt{\Delta t}}\min\mleft\{\Delta x,\frac{\Delta x^2}{\Delta t}\mright\}+\sqrt{\Delta t}\mright)\mright),
        \end{equation}
        where $C_0>0$ is a constant independent of $\Delta$ and $k$.
    \end{prop}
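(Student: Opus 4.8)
The plan is to combine two ingredients that have already appeared: the a priori control $|c_k-c|\le(\wtc_k-\utc_k)/2$ furnished by \cref{iterapprox}, and the discretization error of \cref{errCS}, exactly along the lines of the paragraph preceding the statement. In fact the proof of \cref{iterconv} has already done most of the work, so the bulk of the argument is to record the relevant intermediate inequalities rather than to prove anything new; the point is just to keep the term $(\wtc^\Delta_k-\utc^\Delta_k)/2$ explicit on the right-hand side instead of sending it to $0$.

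First I would, as in the proof of \cref{iterconv}, pass from the exact quantities $\wtc_k$, $\utc_k$ of \cref{itertheo} to the computed quantities $\wtc^\Delta_k$, $\utc^\Delta_k$ via two replacements: switching the extrema over $\Gamma$ to extrema over the grid $\Gamma^0_\Delta$, which by the Lipschitz estimate used to establish~\eqref{eq:basicerr1} costs at most $\ell\Delta x/T$ after division by $T$ (here $\ell$ is the Lipschitz constant of $(x,t)\mapsto(\Scal(t)\phi)(x)$); and replacing the semigroup values $\Scal((k-1)T)\phi$, $\Scal(kT)\phi$ by $v^\Delta(\cdot,(k-1)T)$, $v^\Delta(\cdot,kT)$, which by \cref{errCS} applied with final times $(k-1)T$ and $kT$ costs at most $C(2k+1)$ times the bracketed quantity appearing in~\eqref{eq:itererr.1}. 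This gives, for the half-sums and the half-differences alike,
\[
\left|\frac{\wtc^\Delta_k+\utc^\Delta_k}{2}-\frac{\wtc_k+\utc_k}{2}\right|,\ \left|\frac{\wtc^\Delta_k-\utc^\Delta_k}{2}-\frac{\wtc_k-\utc_k}{2}\right|\ \le\ \frac{\ell\Delta x}{T}+C(2k+1)\left(\frac{1}{\sqrt{\Delta t}}\min\{\Delta x,\Delta x^2/\Delta t\}+\sqrt{\Delta t}\right).
\]
Now I invoke \cref{iterapprox}: with $c_k=(\wtc_k+\utc_k)/2$ it yields $|c_k-c|\le(\wtc_k-\utc_k)/2$. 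Using the first estimate above to move from $c_k$ to $(\wtc^\Delta_k+\utc^\Delta_k)/2$ and the second to bound $(\wtc_k-\utc_k)/2$ by $(\wtc^\Delta_k-\utc^\Delta_k)/2$ plus the same error (this last chain is precisely the display preceding the statement), I obtain
\[
\left|\frac{\wtc^\Delta_k+\utc^\Delta_k}{2}-c\right|\le\frac{\wtc^\Delta_k-\utc^\Delta_k}{2}+\frac{2\ell\Delta x}{T}+2C(2k+1)\left(\frac{1}{\sqrt{\Delta t}}\min\{\Delta x,\Delta x^2/\Delta t\}+\sqrt{\Delta t}\right),
\]
and~\eqref{eq:itererr.1} follows upon absorbing $2\ell$ and the factor $2(2k+1)\le 6(k+1)$ (valid for $k\ge1$) into a single constant $C_0$ independent of $\Delta$ and $k$.

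The argument is essentially the triangle inequality plus constant bookkeeping, so there is no real obstacle; the only point that deserves care is the linear-in-$k$ growth of the discretization error. It arises because the two functions whose difference quotient defines $\wtc^\Delta_k$, $\utc^\Delta_k$ are outputs of the scheme run up to times $(k-1)T$ and $kT$, so \cref{errCS} contributes two error terms of sizes proportional to $(k-1)T$ and $kT$, and dividing the difference quotient by the step length $T$ leaves a factor of order $k$. This is the same mechanism that forces the scaling condition~\eqref{eq:iterconvexp} in \cref{iterconv}, and the main thing to verify is that the constant $C_0$ produced above is genuinely uniform in both $\Delta$ and $k$ — which it is, since it is built only from $\ell$ and the constant $C$ of \cref{errCS}, neither of which depends on $\Delta$ or $k$.
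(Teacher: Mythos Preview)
Your proposal is correct and follows essentially the same route as the paper: the paper's argument is nothing more than the paragraph preceding the statement combined with the estimate from the proof of \cref{iterconv}, i.e., bound $\bigl|\tfrac{\wtc^\Delta_k+\utc^\Delta_k}{2}-c\bigr|$ by $|c_k-c|$ plus the discretization error from \cref{iterconv}, then bound $|c_k-c|\le(\wtc_k-\utc_k)/2$ via \cref{iterapprox} and pass back to $(\wtc^\Delta_k-\utc^\Delta_k)/2$ with another discretization error. Your constant bookkeeping (the factor $2$ and the absorption of $2(2k+1)$ into $C_0(k+1)$) is fine and matches the paper's implicit handling.
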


    \Cref{itererr} justifies the stopping criterion $\wtc^\Delta_k-\utc^\Delta_k<2\varepsilon$ at \cref{alg:iteralgo1} in \cref{iteralgo}, but does not prove the convergence of the algorithm, as \cref{iterconv}, due to the presence of the terms $\wtc^\Delta_k$ and $\utc_k^\Delta$.

    \begin{rem}
        We point out that the presence of $k$ in the numerator of both~\eqref{eq:iterconvexp} and~\eqref{eq:itererr.1} affects the estimated convergence rate of our method. It also means that the approximation provided by~\cref{iteralgo} becomes less reliable with each step, in contrast with \cref{basicalgo}. Fixed an admissible pair $\Delta$, one could exploit~\eqref{eq:itererr.1} to pick a maximal number of iterations after which the algorithm should not be trusted, namely, fix a maximal number of iterations \cref{iteralgo} can perform reliably.
    \end{rem}

    \section{Numerical Simulations}\label{simsec}

    In this section we present numerical simulations performed on two types of networks in $\Rds^2$ to show the performance and convergence properties of the proposed algorithms. In the first case we consider a simple network in the form of an equilateral triangle. In the second one we investigate a more complex network structured as a traffic circle. In both settings we discuss one problem with a Hamiltonian dependent on $s$ and one problem with a Hamiltonian independent of $s$.

    We now focus on the selection of parameters required by \cref{basicalgo}. Based on the theoretical results established in the previous sections, we examine the expected performance of the algorithm as a function of these chosen parameters. First, we select a sequence of pairs $\Delta $ such that
    \begin{equation}\label{eq:notenoughgoodpair}
        \min\mleft\{\frac{\Delta x}{\sqrt{\Delta t}},\frac{\Delta x^2}{\Delta t^{\frac32}}\mright\}\longrightarrow0 \qquad \text{as }\Delta \to 0.
    \end{equation}
    If $\Delta t$ is further chosen to satisfy~\eqref{eq:goodpair}, then \cref{basicerr} ensures that for $k$ large enough the error $\mleft|\frac{\ovc^\Delta_k + \unc^\Delta_k}{2} - c\mright|$ is bounded above by a term proportional to $\min\mleft\{\frac{\Delta x}{\sqrt{\Delta t}}, \frac{\Delta x^2}{\Delta t^{3/2}}\mright\}$ and independent of $k$. This term arises solely from the error estimate for the semi-Lagrangian scheme employed to approximate the solution to~\eqref{eq:globteik} (as shown in \cref{errCS}), it remains constant for fixed $\Delta$ and vanishes as $\Delta \to 0$. Equivalently, by selecting a small enough stopping condition $\varepsilon$ in \cref{basicalgo}, \cref{basicstop} implies that the error $\mleft|\frac{\ovc^\Delta_k + \unc^\Delta_k}{2} - c\mright|$ is bounded above by a term proportional to $\min\mleft\{\frac{\Delta x}{\sqrt{\Delta t}}, \frac{\Delta x^2}{\Delta t^{3/2}}\mright\}$ when the algorithm reaches this tolerance. As before, this term remains constant for fixed $\Delta$ and tends to zero as $\Delta \to 0$. Consequently, this analysis enables us to conclude that, for a fixed $\Delta$, the tolerance $\varepsilon$ can be chosen as a function of $\Delta$ without any loss of accuracy.

    In the next simulations we implement the algorithm with time step $\Delta t= (\min_{\gamma\in\Ecal^+} \Delta x_\gamma)/\beta_0$, for which both~\eqref{eq:notenoughgoodpair} and~\eqref{eq:goodpair} hold. We also evaluate the algorithm for $\Delta t= \Delta x/2$ and $\Delta t= \Delta x^{5/6}$, which satisfy~\eqref{eq:notenoughgoodpair} but do not satisfy~\eqref{eq:goodpair}. These last two choices do not meet the assumptions in \cref{errCS} and, consequentially, in \cref{basicerr}, but allow us for larger time steps. However, the last choice aligns with the convergence properties of the semi-Lagrangian scheme for the time-dependent Hamilton--Jacobi equation~\eqref{eq:globteik} established in~\cite{CarliniSiconolfi23}.

    We additionally perform these tests for \cref{iteralgo} to compare its performance with \cref{basicalgo}. We point out that, while \cref{basicerr} guarantees convergence of \cref{basicalgo} as $\Delta \to 0$ suitably and as $k\to \infty$ independently of $\Delta $, the convergence of \cref{iteralgo} is proven if~\eqref{eq:iterconvexp} holds. Moreover, while \cref{basicstop} ensures that, by choosing a sufficiently small stopping condition, the difference between the approximated and exact critical value is bounded above by a term dependent on $\Delta $ and independent of $k$ when \cref{basicalgo} reaches the prescribed tolerance, \cref{itererr} does not provide a similar result for \cref{iteralgo}. However, from a numerical point of view, \cref{iteralgo} turns out to be highly advantageous. Indeed, the following simulations show that it attains the same accuracy as \cref{basicalgo} while reducing the number of iterations by about 81\% on average.

    We implement \cref{basicalgo,iteralgo} with the improvements detailed in \cref{basicimprov,iterimprov}. With a slight abuse of notation, we denote by $c^\Delta$ the output returned by both \cref{basicalgo} and \cref{iteralgo}. The accuracy is measured by computing the absolute value of the difference between the approximated critical value $c^{\Delta}$ and a reference critical value $\hat c$ or the exact critical value $c$, when it is known. We apply the algorithms in two ways: either by fixing the number of iterations to be executed or by setting a tolerance upon reaching which the algorithm terminates. In the following tables, the column labeled $k$ reports the number of iterations performed, either to complete the predetermined number of iterations or to satisfy the prescribed tolerance. In the following simulations, we run the algorithms with $\phi \equiv 0$, $T=1$ and minimal flux limiters.

    \subsection{Test on a Triangle}

    We consider as $\Gamma \subset \Rds^2$ the triangle with vertices
    \begin{equation*}
        z_1 \coloneqq(0, 0), \qquad z_2\coloneqq\mleft(\frac{1}{2},\frac{\sqrt{3}}{2}\mright), \qquad z_3\coloneqq(1,0),
    \end{equation*}
    and arcs $\gamma_1,\gamma_2,\gamma_3:[0,1]\longrightarrow \Rds^2$
    \begin{equation*}
        \gamma_1(s) \coloneqq sz_2, \qquad \gamma_2(s) \coloneqq(1-s)z_2+sz_3,\qquad \gamma_3 (s)\coloneqq(1-s)z_3,
    \end{equation*}
    plus the reversed arcs (\cref{Network:triangle}).

    \begin{figure}[!ht]
        \centering
        \begin{tikzpicture}[scale=3]
            \draw[->] (-0.5,0) -- (1.5,0) node[right] {$x$};
            \draw[->] (0,-0.5) -- (0,1.5) node[above] {$y$};

            \node[below left] at (0,0) {$z_1$};
            \node[below right] at (1,0) {$z_3$};
            \node[above] at (0.5,{sqrt(3)/2}) {$z_2$};

            \draw[thick, postaction={decorate, decoration={markings, mark=at position 0.5 with {\arrow{>}}}}] (0,0) -- (0.5,{sqrt(3)/2})
            node[midway, left] {$\gamma_1$};

            \draw[thick, postaction={decorate, decoration={markings, mark=at position 0.5 with {\arrow{>}}}}] (0.5,{sqrt(3)/2}) -- (1,0)
            node[midway, right] {$\gamma_2$};

            \draw[thick, postaction={decorate, decoration={markings, mark=at position 0.5 with {\arrow{>}}}}] (1,0) -- (0,0)
            node[midway, below] {$\gamma_3$};

            \filldraw[black] (0,0) circle (0.5pt);
            \filldraw[black] (1,0) circle (0.5pt);
            \filldraw[black] (0.5,{sqrt(3)/2}) circle (0.5pt);
        \end{tikzpicture}
        \caption{Network $\Gamma \subset \Rds^2 $: the triangle.}\label{Network:triangle}
    \end{figure}

    \subsubsection{Hamiltonian Dependent on \texorpdfstring{$s$}{s}}\label{section:TriDependent}

    Let us take the Hamiltonians $H_{\gamma_i}: [0,1]\times \Rds\longrightarrow \Rds$ for $i=1,2,3$
    \begin{equation*}
        H_{\gamma_1}(s,\mu)\coloneqq(\mu+2s)^2,\qquad H_{\gamma_2}(s,\mu)\coloneqq\mu^2+2Bs,\qquad H_{\gamma_3}(s,\mu)\coloneqq(\mu+A-1+2s)(\mu+2A)+C,
    \end{equation*}
    where $A$, $B$, $C$ are nonnegative constants such that $C\ge2B$. It is easy to prove that if
    \begin{equation*}
        A=\sqrt C-1+\frac2{6B}\mleft(C^{\frac32}-(C-2B)^{\frac32}\mright),
    \end{equation*}
    then $c=C$ is the exact critical value of the problem. We study the example with $A=2/3$, $B=1/2$ and $C=1$. For this problem we can choose $\beta_0=12$.

    We begin evaluating the performance of \cref{basicalgo} in relation to the theoretical results explored at the beginning of \cref{simsec}. This analysis further supports the choice of the tolerance selected in the following tests. Specifically, we fix $2000$ iterations to be executed, set the time step $\Delta t = (\min_{\gamma\in\Ecal^+} \Delta x_\gamma)/12$ and implement the algorithm for several refinements of the space grid. The results are illustrated in \cref{fig:TriDep} and summarized in \cref{table:TriangleDepAlgo1k2000}. In \cref{fig:TriDep} (left), the behavior of $(\ovc^\Delta_k - \unc^\Delta_k)/2$ is displayed for several values of the spatial step $\Delta x$. As expected, this quantity decreases as $k$ increases. Consequently, setting a small tolerance in the stopping condition requires the algorithm to perform a large number of iterations. In \cref{fig:TriDep} (right), the error $\mleft| \frac{\ovc^\Delta_k + \unc^\Delta_k}{2} - c \mright|$ is shown for several values of $\Delta x$. Consistently with the discussion at the beginning of \cref{simsec}, this error stabilizes and reaches a constant value after a sufficient number of iterations. This result suggests that, for a fixed $\Delta x$, running all $2000$ iterations is unnecessary and the algorithm can be stopped once the plateau is reached. Examining the behavior as $\Delta x$ varies, as once again predicted by the previous theoretical analysis, it can be seen that the value of the aforementioned plateau decreases as $\Delta x$ decreases. Additionally, the number of iterations required to reach the plateau increases (or equivalently, the tolerance $\varepsilon$ required to stop the algorithm must be smaller) as $\Delta x$ decreases. These observations justify the following choice of tolerance proportional to $\Delta x$.

    \begin{figure}[!ht]
        \centering
        \subfigure
        {
            \includegraphics[width=6cm]{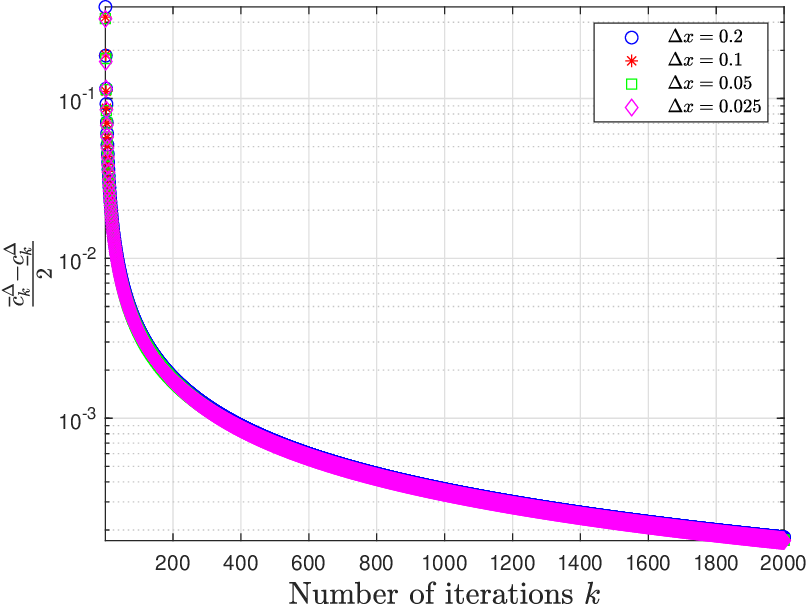}
        }
        \hspace{5mm}
        \subfigure
        {
            \includegraphics[width=6cm]{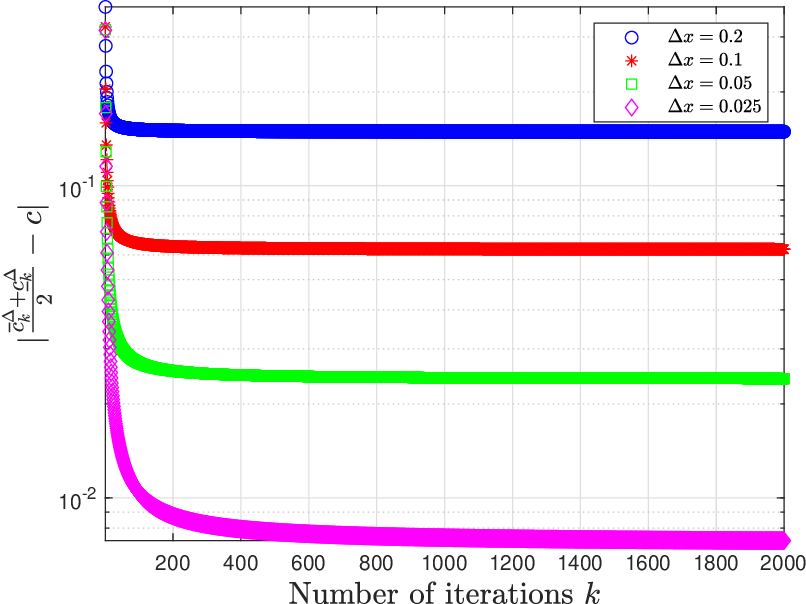}
        }
        \caption{\Cref{basicalgo}. Test on the triangle with Hamiltonian dependent on $s$, $A=2/3$, $B=1/2$, $C=1$, $\Delta t=\min_{\gamma\in\Ecal^+}\Delta_\gamma x/12$ and $2000$ fixed iterations.}\label{fig:TriDep}
    \end{figure}

    \begin{table}[!ht]
        \renewcommand\arraystretch{1.1}
        \centering
        \begin{tabular}{llll}
            \toprule
            ${\Delta x}$ & $k$ & $c^{\Delta}$ &$|c^{\Delta}-c |$ \\
            \midrule
            $2.00\cdot 10^{-1}$ & $2000$ & $1.15$ &$1.49\cdot 10^{-1}$ \\
            $1.00\cdot 10^{-1}$ & $2000$ & $1.06$ &$6.26\cdot 10^{-2} $ \\
            $5.00\cdot 10^{-2}$ & $2000$ & $1.02$ &$2.41\cdot 10^{-2} $ \\
            $2.50\cdot 10^{-2}$& $2000$ & $1.01$ &$7.29\cdot 10^{-3} $ \\
            \bottomrule
        \end{tabular}
        \caption{\Cref{basicalgo}. Test on the triangle with Hamiltonian dependent on $s$, $A=2/3$, $B=1/2$, $C=1$, $\Delta t=(\min_{\gamma\in\Ecal^+}\Delta_\gamma x)/12$ and $2000$ fixed iterations.}\label{table:TriangleDepAlgo1k2000}
    \end{table}

    We now implement \cref{basicalgo} with $\varepsilon = \Delta x/10$, $\Delta t = (\min_{\gamma \in \Ecal^+} \Delta x_\gamma) / 12$ and several refinements of the space grid. The results are presented in columns 8–10 of \cref{table:TriangleDepAlgo1}. Column 10 shows a decreasing trend in the error as $\Delta x$ decreases. As previously justified theoretically and observed in \cref{fig:TriDep} (right), this trend is independent of the choice of tolerance scaled by $\Delta x$. Indeed, comparing the errors in column 4 of \cref{table:TriangleDepAlgo1k2000} with those in column 10 of \cref{table:TriangleDepAlgo1}, the approximated critical value is computed with the same order of error. Additionally, we implement \cref{basicalgo} using the same tolerance $\varepsilon = \Delta x/10$ but varying the time step: $\Delta t = \Delta x^{5/6}$ (columns 2–4 of \cref{table:TriangleDepAlgo1}) and $\Delta t = \Delta x / 2$ (columns 5–7 of \cref{table:TriangleDepAlgo1}). \Cref{table:TriangleDepAlgo1} reveals that the worst approximation is obtained with $\Delta t = \Delta x^{5/6}$, while choosing $\Delta t = \Delta x / 2$ produces errors comparable to those achieved with $\Delta t = \min_{\gamma \in \Ecal^+} \Delta x_\gamma / 12$ preserving almost the same number of iterations. This choice offers the benefit of a larger time step, thereby reducing the computational cost.

    \begin{table}[!ht]
        \renewcommand\arraystretch{1.1}
        \centering
        \begin{tabular}{llllllllll}
            \toprule
            ${\Delta x}$ & $k$ & $c^{\Delta}$ &$|c^{\Delta}-c |$ & $k$ & $c^{\Delta}$ & $|c^{\Delta}-c |$ & $k$ & $c^{\Delta}$& $|c^{\Delta}-c |$ \\
            \midrule
            $2.00\cdot 10^{-1}$ & $40$ & $1.91$ &$9.11\cdot 10^{-1} $ & $19$& $1.20$ &$1.95\cdot 10^{-1} $ & $18$ & $1.16$ &$1.63\cdot 10^{-1} $ \\
            $1.00\cdot 10^{-1}$ & $46$ & $1.30$ &$2.99\cdot 10^{-1} $ & $36$ & $1.08$&$7.86\cdot 10^{-2} $ & $35$ & $1.07$&$7.07\cdot 10^{-2} $ \\
            $5.00\cdot 10^{-2}$ & $75$ & $1.10$ &$1.00\cdot 10^{-1} $ & $70$ & $1.03$&$3.02\cdot 10^{-2} $ & $69$ & $1.03$&$2.84\cdot 10^{-2} $ \\
            $2.50\cdot 10^{-2}$& $141$ & $1.04$ &$4.36\cdot 10^{-2} $ & $138$& $1.01$ &$9.87\cdot 10^{-3} $ & $138$ & $1.01$&$9.48\cdot 10^{-3} $ \\
            $1.25\cdot 10^{-2}$ & $277$ & $1.02$ &$1.71\cdot 10^{-2} $ & $275$& $1.00$ &$1.53\cdot 10^{-3} $ & $275$ & $1.00$ &$1.47\cdot 10^{-3} $\\
            \bottomrule
        \end{tabular}
        \caption{\Cref{basicalgo}. Test on the triangle with Hamiltonian dependent on $s$, $A=2/3$, $B=1/2$, $C=1$ and $\varepsilon=\Delta x/10$. Columns 2--4 refer to $\Delta t =\Delta x^{5/6}$, columns 5--7 refer to $\Delta t=\Delta x/2$ and columns 8--10 refer to $\Delta t=(\min_{\gamma\in\Ecal^+}\Delta_\gamma x)/12$.}\label{table:TriangleDepAlgo1}
    \end{table}

    Finally, we evaluate \cref{iteralgo} with $\varepsilon = \Delta x/10$ and report the results in \cref{table:TriangleDepAlgo2}. Columns 2–4 correspond to $\Delta t = \Delta x^{5/6}$, columns 5–7 to $\Delta t = \Delta x / 2$, and columns 8–10 to $\Delta t = (\min_{\gamma \in \Ecal^+} \Delta x_\gamma) / 12$. Comparing this table with \cref{table:TriangleDepAlgo1}, we find that \cref{iteralgo} exhibits a similar approximation to \cref{basicalgo} and reduces the number of iterations on average by $67\%$ for $\Delta t = \Delta x^{5/6}$, $80 \%$ for $\Delta t = \Delta x / 2$ and $83\%$ for $\Delta t = (\min_{\gamma \in \Ecal^+} \Delta x_\gamma) / 12$.

    \begin{table}[!ht]
        \renewcommand\arraystretch{1.1}

        \centering
        \begin{tabular}{llllllllll}
            \toprule
            ${\Delta x}$ & $k$ & $c^{\Delta}$ &$|c^{\Delta}-c |$ & $k$ & $c^{\Delta}$ & $|c^{\Delta}-c |$ & $k$ & $c^{\Delta}$& $|c^{\Delta}-c |$ \\
            \midrule
            $2.00\cdot 10^{-1}$ & $4$ & $1.91$ &$9.13\cdot 10^{-1} $ & $7$& $1.18$ &$1.79\cdot 10^{-1} $ & $5$ & $1.15$ &$1.50\cdot 10^{-1} $ \\
            $1.00\cdot 10^{-1}$ & $12$ & $1.29$ &$2.92\cdot 10^{-1} $ & $10$ & $1.07$&$7.09\cdot 10^{-2} $ & $8$ & $1.06$&$6.32\cdot 10^{-2} $ \\
            $5.00\cdot 10^{-2}$ & $48$ & $1.10$ &$9.59\cdot 10^{-2} $ & $15$ & $1.03$&$2.60\cdot 10^{-2} $ & $13$ & $1.02$&$2.42\cdot 10^{-2} $ \\
            $2.50\cdot 10^{-2}$& $56$ & $1.04$ &$4.12\cdot 10^{-2} $ & $16$& $1.01$ &$7.52\cdot 10^{-3} $ & $15$ & $1.01$&$7.40\cdot 10^{-3} $ \\
            $1.25\cdot 10^{-2}$ & $65$ & $1.02$ &$1.59\cdot 10^{-2} $ & $6$& $1.00$ &$5.98\cdot 10^{-4} $ & $7$ & $1.00$ &$5.06\cdot 10^{-4} $\\
            \bottomrule
        \end{tabular}
        \caption{\Cref{iteralgo}. Test on the triangle with Hamiltonian dependent on $s$, $A=2/3$, $B=1/2$, $C=1$ and $\varepsilon=\Delta x/10$. Columns 2--4 refer to $\Delta t =\Delta x^{5/6}$, columns 5--7 refer to $\Delta t=\Delta x/2$ and columns 8--10 refer to $\Delta t=(\min_{\gamma\in\Ecal^+}\Delta_\gamma x)/12$.}\label{table:TriangleDepAlgo2}
    \end{table}

    \subsubsection{Hamiltonian Independent of \texorpdfstring{$s$}{s}}\label{section_tri_indepent}

    We analyze a simpler problem in which the Hamiltonians are independent of $s$. Let us take the Hamiltonians $H_{\gamma_i}: \Rds\longrightarrow \Rds$ for $i=1,2,3$
    \begin{equation*}
        H_{\gamma_1}(\mu)\coloneqq(\mu+1)^2,\qquad H_{\gamma_2}(\mu)\coloneqq\mu^2+B,\qquad H_{\gamma_3}(\mu)\coloneqq(\mu+A)(\mu+2A)+C,
    \end{equation*}
    where $A$, $B$, $C$ are nonnegative constants with $C\ge B$. If $A=\sqrt C-1+\sqrt{C-B}\ge0$, then $c=C$ is the exact critical value. We consider the problem with $A=1$, $B=0$ and $C=1$, for which we can set $\beta_0=9.1$.

    We begin by performing a preliminary analysis of \cref{basicalgo} through two tests, both using $\Delta t=(\min_{\gamma\in\Ecal^+}\Delta_\gamma x)/9.1$. The first test employs $\varepsilon=\Delta x/10$ (columns 2--4 of \cref{table:TriangleIndepAlgo1k2000}), while the second test fixes $2000$ iterations to be executed (\cref{fig:TriIndep} and columns 5–7 of \cref{table:TriangleIndepAlgo1k2000}). As observed in the previous problem, the decreasing behavior of $(\ovc^\Delta_k - \unc^\Delta_k)/2$, shown in \cref{fig:TriIndep} (left), confirms that a smaller tolerance $\varepsilon$ in \cref{basicalgo} leads the algorithm to perform a larger number of iterations before stopping. However, unlike the previous problem where the Hamiltonians depend on $s$, the current tests do not approximate critical values with error of the same order of magnitude. Indeed, as illustrated in \cref{fig:TriIndep} (right), the error $ \mleft| \frac{\ovc^\Delta_k + \unc^\Delta_k}{2} - c \mright| $ decreases as $ k $ increases, without reaching a plateau. In this case, the decreasing trend of errors as $\Delta x$ decreases, presented in column 4 of \cref{table:TriangleIndepAlgo1k2000}, dependents on the tolerance proportional to $\Delta x$. This is because as $\Delta x$ decreases, the tolerance $\varepsilon$ also decreases, requiring to an increase in the number of iterations (see column 2 of \cref{table:TriangleIndepAlgo1k2000}). What has just been observed does not contradict the theoretical analysis conducted at the beginning of \cref{simsec}. Indeed, as derived from \cref{basicerr} (or equivalently, from \cref{basicstop}), the error $ \mleft| \frac{\ovc^\Delta_k + \unc^\Delta_k}{2} - c \mright| $ is bounded above by a term inversely proportional to $ k$ (or equivalently, by $(\ovc_k^\Delta-\unc_k^\Delta)/2$) and an additional term depending on $\min \mleft\{ \frac{\Delta x}{\sqrt{\Delta t}}, \frac{\Delta x^2}{\Delta t^{3/2}} \mright\} $, independent of $ k $. The latter term originates exclusively from the approximation error estimate for the semi-Lagrangian scheme applied to~\eqref{eq:globteik}. However, as thoroughly analyzed in~\cite{CarliniCoscettiPozza24}, when the Hamiltonians are independent of $ s $ and condition~\eqref{eq:goodpair} holds, the approximation error is primarily due to the approximation of the minimum in the scheme. As a result, this error is much smaller than the estimate in \cref{errCS}. Consequently, in the error estimate for $\mleft| \frac{\ovc^\Delta_k + \unc^\Delta_k}{2} - c \mright|$, the dominant term is that one proportional to $1/k$ (or equivalently, the dominant term is $(\ovc_k^\Delta-\unc_k^\Delta)/2$), which explains the absence of a plateau in \cref{fig:TriIndep} (right).

    \begin{figure}[!ht]
        \centering
        \subfigure
        {
            \includegraphics[width=6cm]{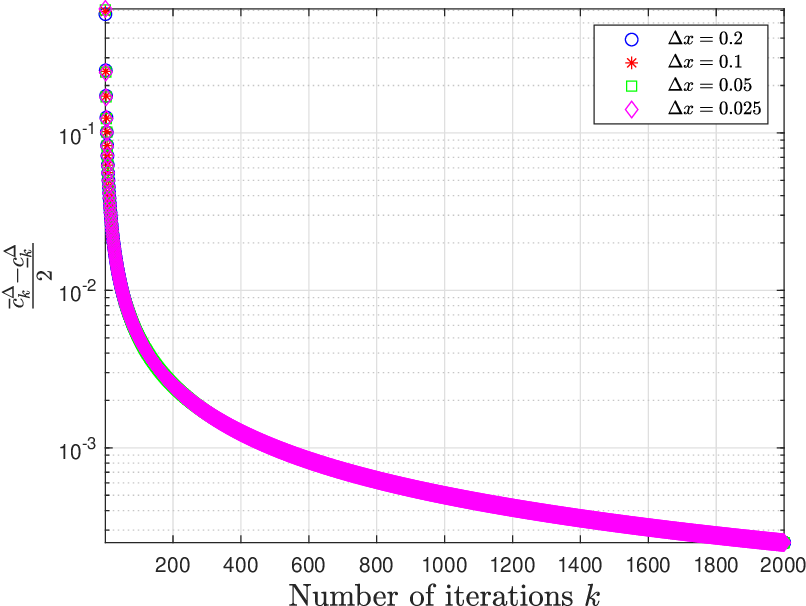}
        }
        \hspace{5mm}
        \subfigure
        {
            \includegraphics[width=6cm]{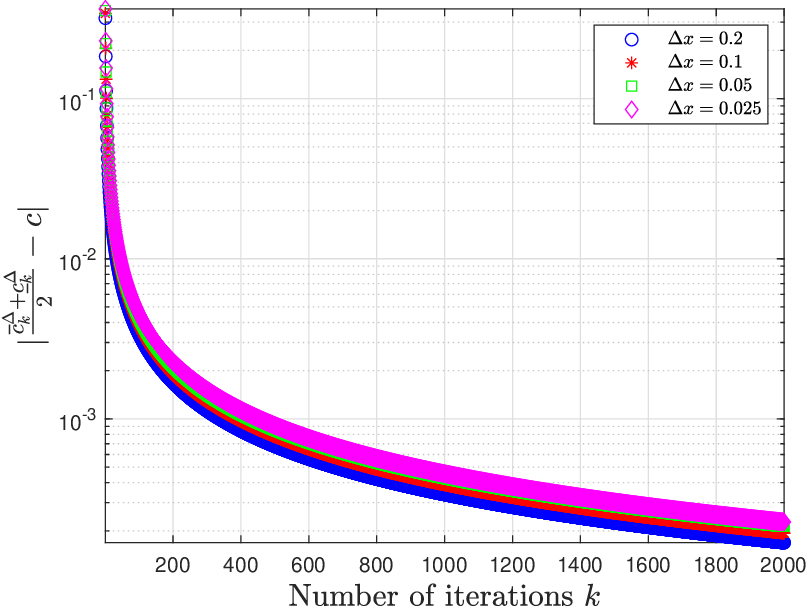}
        }
        \caption{\Cref{basicalgo}. Test on the triangle with Hamiltonian independent of $s$, $A=1$, $B=0$, $C=1$, $\Delta t=(\min_{\gamma\in\Ecal^+}\Delta_\gamma x)/9.1$ and $2000$ fixed iterations.}\label{fig:TriIndep}
    \end{figure}

    \begin{table}[!ht]
        \renewcommand\arraystretch{1.1}
        \centering
        \begin{tabular}{lllllll}
            \toprule
            ${\Delta x}$ & $k$ & $c^{\Delta}$ &$|c^{\Delta}-c |$ & $k$ & $c^{\Delta}$ & $|c^{\Delta}-c |$ \\
            \midrule
            $2.00\cdot 10^{-1}$ & $25$ & $1.01$ &$1.35\cdot 10^{-2} $ & $2000$& $1.00$ &$1.69\cdot 10^{-4} $ \\
            $1.00\cdot 10^{-1}$ & $51$ & $1.01$ &$7.60\cdot 10^{-3} $ & $2000$ & $1.00$&$1.94\cdot 10^{-4} $ \\
            $5.00\cdot 10^{-2}$ & $100$ & $1.00$ &$4.27\cdot 10^{-3} $ & $2000$ & $1.00$&$2.14\cdot 10^{-4} $ \\
            $2.50\cdot 10^{-2}$& $200$ & $1.00$ &$2.28\cdot 10^{-3} $ & $2000$& $1.00$ &$2.28\cdot 10^{-3} $ \\
            $1.25\cdot 10^{-2}$ & $400$ & $1.00$ &$1.18\cdot 10^{-3} $ & $2000$& $1.00$ &$2.37\cdot 10^{-4} $\\
            \bottomrule
        \end{tabular}
        \caption{\Cref{basicalgo}. Test on the triangle with Hamiltonian independent of $s$, $A=1$, $B=0$, $C=1$ and $\Delta t=(\min_{\gamma\in\Ecal^+}\Delta_\gamma x)/9.1$. Columns 2--4 refer to tests with $\varepsilon=\Delta x/10$ and columns 5--7 refer to tests with $2000$ fixed iterations.}\label{table:TriangleIndepAlgo1k2000}
    \end{table}

    Next, we implement \cref{basicalgo} with the same time step $ \Delta t = (\min_{\gamma \in \mathcal{E}^+} \Delta_{\gamma} x) / 9.1 $, but with a smaller tolerance $ \varepsilon = \Delta x / 100 $ (columns 8–11 in \cref{table:TriangleIndepAlgo1}). This yields smaller errors compared to the previous test shown in column 4 of \cref{table:TriangleIndepAlgo1k2000}. As just discussed, this reduction in error can be attributed to the choice of a smaller tolerance. Additionally, the error exhibits a decreasing trend as $\Delta x $ decreases, which is still caused by the tolerance being proportional to the spatial step.

    Furthermore, we apply \cref{basicalgo} keeping $ \varepsilon = \Delta x/100 $ but exploring different time step choices: $\Delta t = \Delta x^{5/6} $ (columns 2–4 of \cref{table:TriangleIndepAlgo1}) and $\Delta t = \Delta x / 2 $ (columns 5–7 of \cref{table:TriangleIndepAlgo1}). It can be seen that the largest error occurs when $ \Delta t = \Delta x^{5/6} $. On the other hand, setting $ \Delta t = \Delta x / 2 $ results in errors of a comparable magnitude while preserving an almost identical number of iterations. This latter choice is particularly advantageous, as the larger time step leads to a reduction in computational cost.

    \begin{table}[!ht]
        \renewcommand\arraystretch{1.1}
        \centering
        \begin{tabular}{llllllllll}
            \toprule
            ${\Delta x}$ & $k$ & $c^{\Delta}$ &$|c^{\Delta}-c |$ & $k$ & $c^{\Delta}$ & $|c^{\Delta}-c |$ & $k$ & $c^{\Delta}$& $|c^{\Delta}-c |$ \\
            \midrule
            $2.00\cdot 10^{-1}$ & $263$ & $0.95$ &$4.93\cdot 10^{-2} $ & $250$& $1.00$ &$1.64\cdot 10^{-3} $ & $251$ & $1.00$ &$1.35\cdot 10^{-3} $ \\
            $1.00\cdot 10^{-1}$ & $504$ & $0.99$ &$9.70\cdot 10^{-3} $ & $501$ & $1.00$&$8.78\cdot 10^{-4} $ & $501$ & $1.00$&$7.74\cdot 10^{-4} $ \\
            $5.00\cdot 10^{-2}$ & $1004$ & $1.00$ &$4.51\cdot 10^{-3} $ & $1000$ & $1.00$&$4.62\cdot 10^{-4} $ & $1001$ & $1.00$&$4.27\cdot 10^{-4} $ \\
            $2.50\cdot 10^{-2}$& $2004$ & $1.00$ &$1.77\cdot 10^{-3} $ & $2001$& $1.00$ &$2.39\cdot 10^{-4} $ & $2001$ & $1.00$&$2.27\cdot 10^{-4} $ \\
            $1.25\cdot 10^{-2}$ & $4003$ & $1.00$ &$7.50\cdot 10^{-4} $ & $4000$& $1.00$ &$1.22\cdot 10^{-4} $ & $4001$ & $1.00$ &$1.18\cdot 10^{-4} $\\
            \bottomrule
        \end{tabular}
        \caption{\Cref{basicalgo}. Test on the triangle with Hamiltonian independent on $s$, $A=1$, $B=0$, $C=1$ and $\varepsilon=\Delta x/100$. Columns 2--4 refer to $\Delta t =\Delta x^{5/6}$, columns 5--7 refer to $\Delta t=\Delta x/2$ and columns 8--10 refer to $\Delta t=(\min_{\gamma\in\Ecal^+}\Delta_\gamma x)/9.1$.}\label{table:TriangleIndepAlgo1}
    \end{table}

    Finally, we implement \cref{iteralgo} with $ \varepsilon = \Delta x/100 $ and report the results in \cref{table:TriangleIndepAlgo2}, whose columns 2–4 refer to $\Delta t = (\Delta x)^{5/6}$, columns 5–7 to $\Delta t = \Delta x / 2$, and columns 8–10 to $\Delta t = \min_{\gamma \in \Ecal^+} \Delta x_\gamma / 9.1$. A comparison with \cref{table:TriangleIndepAlgo1} reveals that \cref{iteralgo} achieves a better approximation with substantially fewer iterations. More specifically, the number of iterations is reduced on average by $79 \%$ for $\Delta t = (\Delta x)^{5/6}$, $93 \%$ for $\Delta t = \Delta x / 2$ and $97\%$ for $\Delta t = \min_{\gamma \in \Ecal^+} \Delta x_\gamma / 9.1$.

    \begin{table}[!ht]
        \renewcommand\arraystretch{1.1}
        \centering
        \begin{tabular}{llllllllll}
            \toprule
            ${\Delta x}$ & $k$ & $c^{\Delta}$ &$|c^{\Delta}-c |$ & $k$ & $c^{\Delta}$ & $|c^{\Delta}-c |$ & $k$ & $c^{\Delta}$& $|c^{\Delta}-c |$ \\
            \midrule
            $2.00\cdot 10^{-1}$ & $49$ & $0.95$ &$5.11\cdot 10^{-2} $ & $18$& $1.00$ &$1.41\cdot 10^{-3} $ & $9$ & $1.00$ &$5.11\cdot 10^{-5} $ \\
            $1.00\cdot 10^{-1}$ & $73$ & $0.99$ &$1.07\cdot 10^{-2} $ & $36$ & $1.00$&$4.77\cdot 10^{-5} $ & $17$ & $1.00$&$3.48\cdot 10^{-5} $ \\
            $5.00\cdot 10^{-2}$ & $168$ & $0.99$ &$5.00\cdot 10^{-3} $ & $72$ & $1.00$&$1.35\cdot 10^{-5} $ & $34$ & $1.00$&$1.04\cdot 10^{-5} $ \\
            $2.50\cdot 10^{-2}$& $442$ & $1.00$ &$2.02\cdot 10^{-3} $ & $143$& $1.00$ &$3.66\cdot 10^{-6} $ & $68$ & $1.00$&$3.11\cdot 10^{-6} $ \\
            $1.25\cdot 10^{-2}$ & $1358$ & $1.00$ &$8.74\cdot 10^{-4} $ & $286$& $1.00$ &$9.36\cdot 10^{-7} $ & $136$ & $1.00$ &$8.34\cdot 10^{-7} $\\
            \bottomrule
        \end{tabular}
        \caption{\Cref{iteralgo}. Test on the triangle with Hamiltonian independent on $s$, $A=1$, $B=0$, $C=1$ and $\varepsilon=\Delta x/100$. Columns 2--4 refer to $\Delta t =\Delta x^{5/6}$, columns 5--7 refer to $\Delta t=\Delta x/2$ and columns 8--10 refer to $\Delta t=(\min_{\gamma\in\Ecal^+}\Delta_\gamma x)/9.1$.}\label{table:TriangleIndepAlgo2}
    \end{table}

    \subsection{Test on a Traffic Circle}

    We consider as $\Gamma \subset \Rds^2$ a network formed by 8 vertices and 12 arcs. The vertices are defined as
    \begin{align*}
        z_1\coloneqq\;&(-2, 0), & z_2\coloneqq\;&(-1, 0), & z_3\coloneqq\;&(0,2), & z_4\coloneqq\;&(0,1),\\
        z_5\coloneqq\;&(2, 0), & z_6\coloneqq\;&(1, 0), & z_7\coloneqq\;&(0,-2), & z_8\coloneqq\;&(0,-1).
    \end{align*}
    The arcs $\gamma_i:[0,1]\longrightarrow \Rds^2$ for $i=1,\dots, 12$ are defined as
    \begin{align*}
        \gamma_1(s)\coloneqq\;&(1-s)z_1+sz_2, & \gamma_2(s)\coloneqq\;&(1-s)z_1+sz_3,& \gamma_3(s)\coloneqq\;&(1-s)z_1+sz_7,\\
        \gamma_4(s)\coloneqq\;&(1-s)z_2+sz_4, & \gamma_5(s)\coloneqq\;&(1-s)z_2+sz_8,& \gamma_6(s)\coloneqq\;&(1-s)z_3+sz_4,\\
        \gamma_7(s)\coloneqq\;&(1-s)z_3+sz_5, & \gamma_8(s)\coloneqq\;&(1-s)z_4+sz_6,& \gamma_9(s)\coloneqq\;&(1-s)z_5+sz_6,\\
        \gamma_{10}(s)\coloneqq\;&(1-s)z_5+sz_7, & \gamma_{11}(s)\coloneqq\;&(1-s)z_6+sz_8,& \gamma_{12}(s)\coloneqq\;&(1-s)z_7+sz_8,
    \end{align*}
    plus the reversed arcs (\cref{Network:TrafficCircle}).

    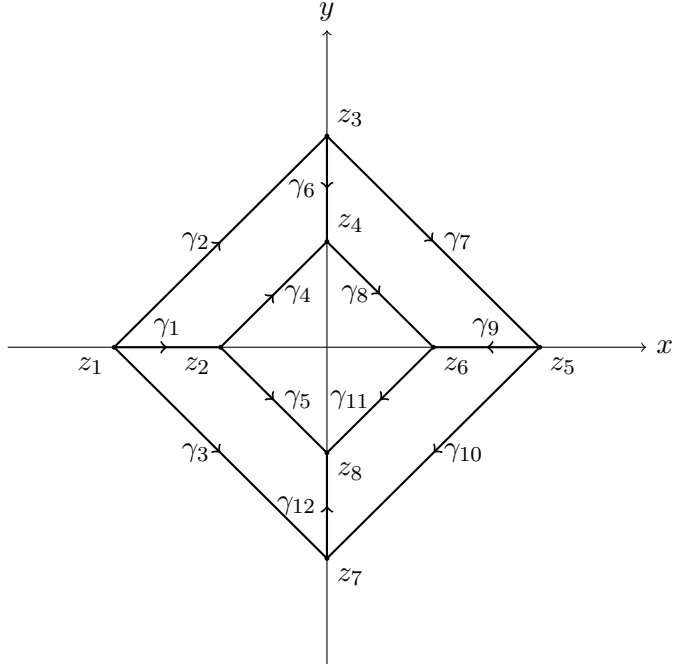
\begin{figure}[!ht]
        \centering
        \begin{tikzpicture}[scale=1.4]
            \draw[->] (-3,0) -- (3,0) node[right] {$x$};
            \draw[->] (0,-3) -- (0,3) node[above] {$y$};

            \node[below left] at (-2,0) {$z_1$};
            \node[below left] at (-1,0) {$z_2$};
            \node[above right] at (0,2) {$z_3$};
            \node[above right] at (0,1) {$z_4$};
            \node[below right] at (2,0) {$z_5$};
            \node[below right] at (1,0) {$z_6$};
            \node[below right] at (0,-2) {$z_7$};
            \node[below right] at (0,-1) {$z_8$};

            \draw[thick, postaction={decorate, decoration={markings, mark=at position 0.5 with {\arrow{>}}}}] (-2,0) -- (-1,0)
            node[midway, above ] {$\gamma_1$};

            \draw[thick, postaction={decorate, decoration={markings, mark=at position 0.5 with {\arrow{>}}}}] (-2,0) -- (0,2)
            node[midway, left ] {$\gamma_2$};

            \draw[thick, postaction={decorate, decoration={markings, mark=at position 0.5 with {\arrow{>}}}}] (-2,0) -- (0,-2)
            node[midway, left ] {$\gamma_3$};

            \draw[thick, postaction={decorate, decoration={markings, mark=at position 0.5 with {\arrow{>}}}}] (-1,0) -- (0,1)
            node[midway, right ] {$\gamma_4$};

            \draw[thick, postaction={decorate, decoration={markings, mark=at position 0.5 with {\arrow{>}}}}] (-1,0) -- (0,-1)
            node[midway, right ] {$\gamma_5$};

            \draw[thick, postaction={decorate, decoration={markings, mark=at position 0.5 with {\arrow{>}}}}] (0,2) -- (0,1)
            node[midway, left ] {$\gamma_6$};

            \draw[thick, postaction={decorate, decoration={markings, mark=at position 0.5 with {\arrow{>}}}}] (0,2) -- (2,0)
            node[midway, right ] {$\gamma_7$};

            \draw[thick, postaction={decorate, decoration={markings, mark=at position 0.5 with {\arrow{>}}}}] (0,1) -- (1,0)
            node[midway, left ] {$\gamma_8$};

            \draw[thick, postaction={decorate, decoration={markings, mark=at position 0.5 with {\arrow{>}}}}] (2,0) -- (1,0)
            node[midway, above ] {$\gamma_9$};

            \draw[thick, postaction={decorate, decoration={markings, mark=at position 0.5 with {\arrow{>}}}}] (2,0) -- (0,-2)
            node[midway, right ] {$\gamma_{10}$};

            \draw[thick, postaction={decorate, decoration={markings, mark=at position 0.5 with {\arrow{>}}}}] (1,0) -- (0,-1)
            node[midway, left ] {$\gamma_{11}$};

            \draw[thick, postaction={decorate, decoration={markings, mark=at position 0.5 with {\arrow{>}}}}] (0,-2) -- (0,-1)
            node[midway, left ] {$\gamma_{12}$};

            \filldraw[black] (-2,0) circle (0.5pt);
            \filldraw[black] (-1,0) circle (0.5pt);
            \filldraw[black] (0,2) circle (0.5pt);
            \filldraw[black] (0,1) circle (0.5pt);
            \filldraw[black] (2,0) circle (0.5pt);
            \filldraw[black] (1,0) circle (0.5pt);
            \filldraw[black] (0,-2) circle (0.5pt);
            \filldraw[black] (0,-1) circle (0.5pt);
        \end{tikzpicture}
        \caption{Network $\Gamma \subset \Rds^2 $: the traffic circle.}\label{Network:TrafficCircle}
    \end{figure}

    \subsubsection{Hamiltonian Dependent on \texorpdfstring{$s$}{s}}

    Let us take the Hamiltonians $H_{\gamma_i}:[0,1]\times \Rds\longrightarrow \Rds$ for $i=1,\dots,12$
    \begin{equation*}
        H_{\gamma_i}(s,\mu)\coloneqq\mleft\{
        \begin{aligned}
            &\frac{(\mu-1)^2}{2}- 5, && \text{for } i=2,3,7,10 ,\\
            &\frac{(\mu+4s)^2}{4}, && \text{for } i=4,5,8,11 , \\
            &\frac{\mu^2}{2}- 5, && \text{for } i=1,6,9,12,
        \end{aligned}
        \mright.
    \end{equation*}
    where $\beta_0 = 9.5$ can be chosen.

    Since the exact critical value for this problem is not explicitly known, we take $\hat{c} = 2.59 \cdot 10^{-1}$ as the reference critical value, computed by running \cref{iteralgo} with parameters $\Delta x = 1.25 \cdot 10^{-2}$, $\Delta t = (\min_{\gamma\in\Ecal^+} \Delta_\gamma x)/9.5$ and $ \varepsilon = \Delta x/10$, and rounding to the first three significant figures.

    We carry out the study of the algorithms, starting with \cref{basicalgo}. The algorithm is tested with $\Delta t = (\min_{\gamma\in\Ecal^+} \Delta_\gamma x)/9.5$ and with $2000$ fixed iterations to be executed. The corresponding results are presented in \cref{table:RotDepAlgo1k2000} and \cref{fig:RotDep}. We compare these results with those in columns 8--10 of \cref{table:RotDepAlgo1}, obtained by implementing the algorithm with the same time step $\Delta t = (\min_{\gamma\in\Ecal^+} \Delta_\gamma x)/9.5$ but setting the tolerance $\varepsilon = \Delta x/10$. The observed errors are approximately the same. As illustrated in \cref{fig:RotDep}, the sequences $(\ovc^\Delta_k - \unc^\Delta_k)/2$ decrease in $k$ and the errors $\mleft| \frac{\ovc^\Delta_k + \unc^\Delta_k}{2} - \hat c \mright|$ reach a plateau, whose value decreases in $\Delta x$. This confirms that choosing $\varepsilon=\Delta x/10$ is not restrictive, as already analyzed in \cref{section:TriDependent}.

    \begin{figure}[!ht]
        \centering
        \subfigure
        {
            \includegraphics[width=4.5cm]{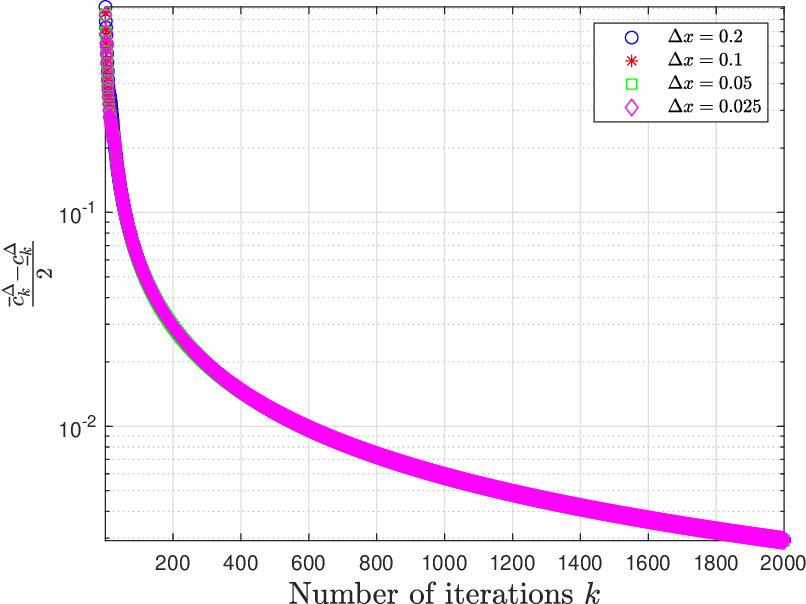}
        }
        \hspace{5mm}
        \subfigure
        {
            \includegraphics[width=4.5cm]{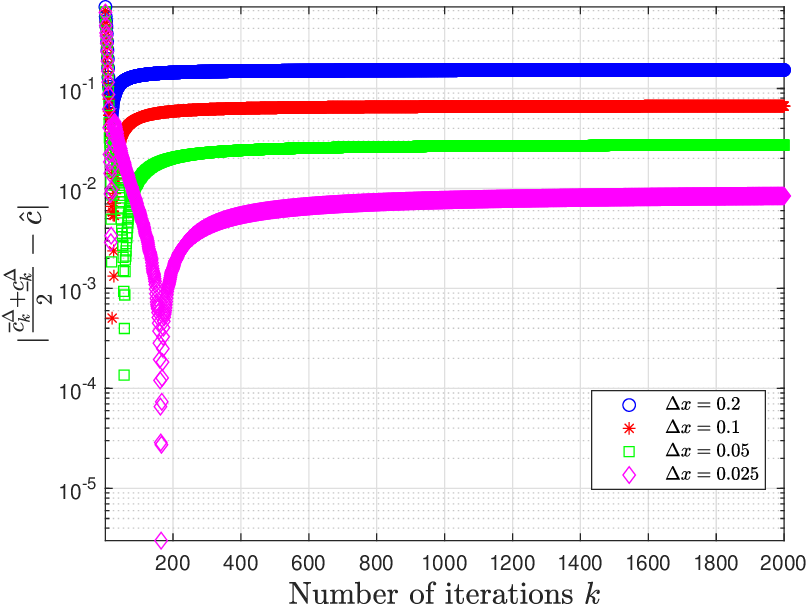}
        }
        \subfigure
        {
            \includegraphics[width=4.15cm]{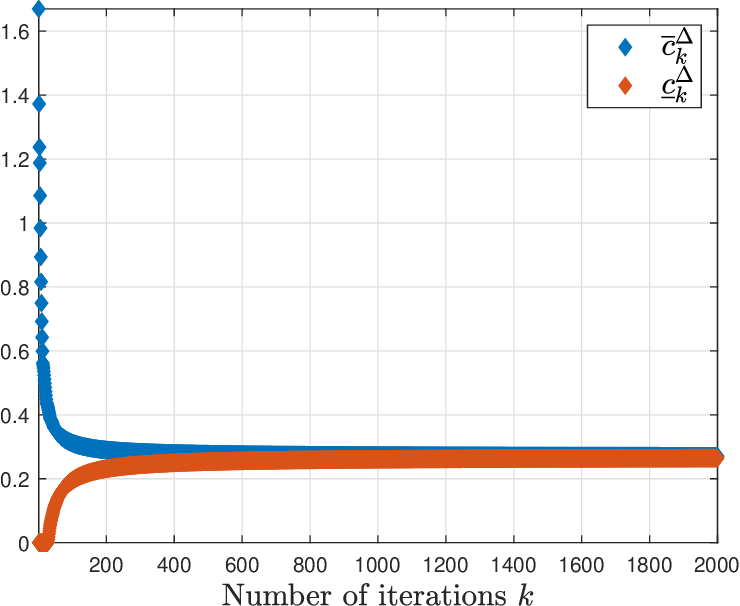}
        }
        \caption{\Cref{basicalgo}. Test on the traffic circle with Hamiltonian dependent on $s$, $\Delta t=(\min_{\gamma\in\Ecal^+}\Delta_\gamma x)/9.5$ and $2000$ fixed iterations. The third graph refers to the test with $\Delta x=2.50\cdot 10^{-2}$.}\label{fig:RotDep}
    \end{figure}

    \begin{table}[!ht]
        \renewcommand\arraystretch{1.1}
        \centering
        \begin{tabular}{lllllll}
            \toprule
            ${\Delta x}$ & $k$ & $c^{\Delta}$ &$|c^{\Delta}-\hat c |$ \\
            \midrule
            $2.00\cdot 10^{-1}$ & $2000$& $0.41$ &$1.53\cdot 10^{-1} $ \\
            $1.00\cdot 10^{-1}$ & $2000$ & $0.33$&$6.67\cdot 10^{-2} $ \\
            $5.00\cdot 10^{-2}$ & $2000$ & $0.29$&$2.72\cdot 10^{-2} $ \\
            $2.50\cdot 10^{-2}$& $2000$& $0.27$ &$8.42\cdot 10^{-3} $ \\
            \bottomrule
        \end{tabular}
        \caption{\Cref{basicalgo}. Test on the traffic circle with Hamiltonian dependent on $s$, $\Delta t=(\min_{\gamma\in\Ecal^+}\Delta_\gamma x)/9.5$ and $2000$ fixed iterations.}\label{table:RotDepAlgo1k2000}
    \end{table}

    \begin{table}[!ht]
        \renewcommand\arraystretch{1.1}
        \centering
        \begin{tabular}{llllllllll}
            \toprule
            ${\Delta x}$ & $k$ & $c^{\Delta}$ &$|c^{\Delta}-\hat c |$ & $k$ & $c^{\Delta}$ & $|c^{\Delta}-\hat c |$ & $k$ & $c^{\Delta}$& $|c^{\Delta}-\hat c |$ \\
            \midrule
            $2.00\cdot 10^{-1}$ & $334$ & $0.50$ &$2.36\cdot 10^{-1} $ & $294$& $0.41$ &$1.48\cdot 10^{-1} $ & $294$ & $0.41$ &$1.48\cdot 10^{-1} $ \\
            $1.00\cdot 10^{-1}$ & $613$ & $0.33$&$7.43\cdot 10^{-2} $ & $584$ & $0.32$&$6.47\cdot 10^{-2} $ & $584$ & $0.32$ &$6.46\cdot 10^{-2} $ \\
            $5.00\cdot 10^{-2}$ & $1192$ & $0.29$&$2.67\cdot 10^{-2} $ & $1168$ & $0.29$&$2.67\cdot 10^{-2} $ & $1168$ & $0.29$ &$2.66\cdot 10^{-2} $ \\
            $2.50\cdot 10^{-2}$ & $2372$ & $0.27$&$8.91\cdot 10^{-3} $ & $2341$& $0.27$ &$8.54\cdot 10^{-3} $ & $2341$ & $0.27$ &$8.53\cdot 10^{-3} $ \\
            $1.25\cdot 10^{-2}$& $4711$ & $0.26$ &$1.08\cdot 10^{-3} $ & $4679$& $0.26$ & $3.85\cdot 10^{-4} $ & $4679$ & $0.26$&$ 3.87\cdot 10^{-4} $\\
            \bottomrule
        \end{tabular}
        \caption{\Cref{basicalgo}. Test on the traffic circle with Hamiltonian dependent on $s$ and $\varepsilon=\Delta x/10$. Columns 2--4 refer to $\Delta t =\Delta x^{5/6}$, columns 5--7 refer to $\Delta t=\Delta x/2$ and columns 8--10 refer to $\Delta t=(\min_{\gamma\in\Ecal^+}\Delta_\gamma x)/9.5$.}\label{table:RotDepAlgo1}
    \end{table}

    We point out that, while \cref{fig:RotDep} (right) shows decreasing errors in $k$ toward a plateau, \cref{fig:RotDep} (center) reports a different behavior. For instance, in the test with $\Delta x= 2.50\cdot 10^{-2}$, the error reaches the lowest point at iteration 165 before converging and stabilizing at a certain value. In this test, when $k=165$, the distance between $\ovc^\Delta_k$ and $\unc^\Delta_k$ is not smaller than $2\varepsilon$, as can be seen in \cref{fig:RotDep} (left and right), however their average is very close to the reference critical value.

    We continue our study setting $\varepsilon=\Delta x/10$, $\Delta t = \Delta x^{5/6} $, $\Delta t = \Delta x / 2 $, $\Delta t = (\min_{\gamma\in\Ecal^+} \Delta_\gamma x)/9.5$ and comparing the results given by \cref{basicalgo} and \cref{iteralgo}, shown in \cref{table:RotDepAlgo1} and \cref{table:RotDepAlgo2}, respectively. What stands out is that \cref{iteralgo} requires fewer iterations than \cref{basicalgo}. In fact, \cref{iteralgo} yields an average reduction in the number of iterations as following: $67\%$ for $\Delta t = \Delta x^{5/6}$ (column 2 of \cref{table:RotDepAlgo1} and \cref{table:RotDepAlgo2}), $84 \%$ for $\Delta t = \Delta x / 2 $ (column 5 of \cref{table:RotDepAlgo1} and \cref{table:RotDepAlgo2}) and $87 \%$ for $\Delta t = (\min_{\gamma\in\Ecal^+} \Delta_\gamma x)/9.5$ (column 8 of \cref{table:RotDepAlgo1} and \cref{table:RotDepAlgo2}).

    \begin{table}[!ht]
        \renewcommand\arraystretch{1.1}
        \centering
        \begin{tabular}{llllllllll}
            \toprule
            ${\Delta x}$ & $k$ & $c^{\Delta}$ &$|c^{\Delta}-\hat c |$ & $k$ & $c^{\Delta}$ & $|c^{\Delta}-\hat c |$ & $k$ & $c^{\Delta}$& $|c^{\Delta}-\hat c |$ \\
            \midrule
            $2.00\cdot 10^{-1}$ & $99$ & $0.50$ &$2.43\cdot 10^{-1} $ & $47$& $0.41$ & $1.54\cdot 10^{-1} $ & $38$ & $0.41$ &$1.54\cdot 10^{-1} $ \\
            $1.00\cdot 10^{-1}$ & $190$ & $0.34$ &$7.76\cdot 10^{-2} $ & $93$ & $0.33$ & $6.78\cdot 10^{-2} $ & $79$ & $0.33$&$6.76\cdot 10^{-2} $ \\
            $5.00\cdot 10^{-2}$ & $435$ & $0.29$ &$2.85\cdot 10^{-2} $ & $184$ & $0.29$ &$2.80\cdot 10^{-2} $ & $159$ & $0.29$&$2.80\cdot 10^{-2} $ \\
            $2.50\cdot 10^{-2}$& $814$ & $0.27$ &$1.02\cdot 10^{-2} $ & $368$& $0.27$ & $9.21\cdot 10^{-3} $ & $319$ & $0.27$&$9.20\cdot 10^{-3} $ \\
            $1.25\cdot 10^{-2}$& $1576$ & $0.26$ &$1.36\cdot 10^{-3} $ & $737$& $0.26$ & $5.25\cdot 10^{-5} $ & $643$ & $0.26$&$6.17\cdot 10^{-5} $\\
            \bottomrule
        \end{tabular}
        \caption{\Cref{iteralgo}. Test on the traffic circle with Hamiltonian dependent on $s$ and $\varepsilon=\Delta x/10$. Columns 2--4 refer to $\Delta t =\Delta x^{5/6}$, columns 5--7 refer to $\Delta t=\Delta x/2$ and columns 8--10 refer to $\Delta t=(\min_{\gamma\in\Ecal^+}\Delta_\gamma x)/9.5$.}\label{table:RotDepAlgo2}
    \end{table}

    \subsubsection{Hamiltonian Independent of \texorpdfstring{$s$}{s}}

    We now simplify the previous test and change the Hamiltonian on the inner traffic circle so that it becomes independent of $s$. Let us take the Hamiltonians $H_{\gamma_i}: \Rds\longrightarrow \Rds$ for $i=1,\dots,12$

    \begin{equation*}
        H_{\gamma_i}(\mu)\coloneqq\mleft\{
        \begin{aligned}
            &\frac{(\mu-1)^2}{2}- 5, && \text{for } i=2,3,7,10 ,\\
            &\frac{(\mu+2)^2}{2}- 2, && \text{for } i=4,5,8,11 , \\
            &\frac{\mu^2}{2}- 5, && \text{for } i=1,6,9,12.
        \end{aligned}
        \mright.
    \end{equation*}

    For this problem we can choose $\beta_0=7.5$ and, as in the previous case, the exact critical value is unknown. We set the reference critical value $\hat{c} = -1.50$ as the value computed by \cref{iteralgo} with $\Delta x = 1.25 \cdot 10^{-2}$, $\Delta t=(\min_{\gamma\in\Ecal^+}\Delta_\gamma x)/7.5$, $ \varepsilon = \Delta x/10$ and rounded to the first three significant figures.

    We test \cref{basicalgo} with $\Delta t=(\min_{\gamma\in\Ecal^+} \Delta_\gamma x)/7.5$ and $2000$ fixed iterations to be executed (\cref{table:RotIndepAlgo1k2000} and \cref{fig:RotIndep}). The current problem is independent of $s$ and, as detailed in \cref{section_tri_indepent}, the approximation error introduced by the semi-Lagrangian scheme is smaller than the estimate in \cref{errCS}. As a result, the errors $\mleft| \frac{\ovc^\Delta_k + \unc^\Delta_k}{2} - \hat c \mright|$ decrease and do not reach a plateau, as shown in \cref{fig:RotIndep}. Since $(\ovc^\Delta_k - \unc^\Delta_k)/2$ decreases in $k$ (see \cref{fig:RotIndep}), when we implement \cref{basicalgo} with $\varepsilon=\Delta x/10$ (columns 8--10 of \cref{table:RotIndepAlgo1}), the decreasing trend of the errors as $\Delta x$ decreases is a consequence of the chosen tolerance.

    \begin{figure}[!ht]
        \centering
        \subfigure
        {
            \includegraphics[width=5cm]{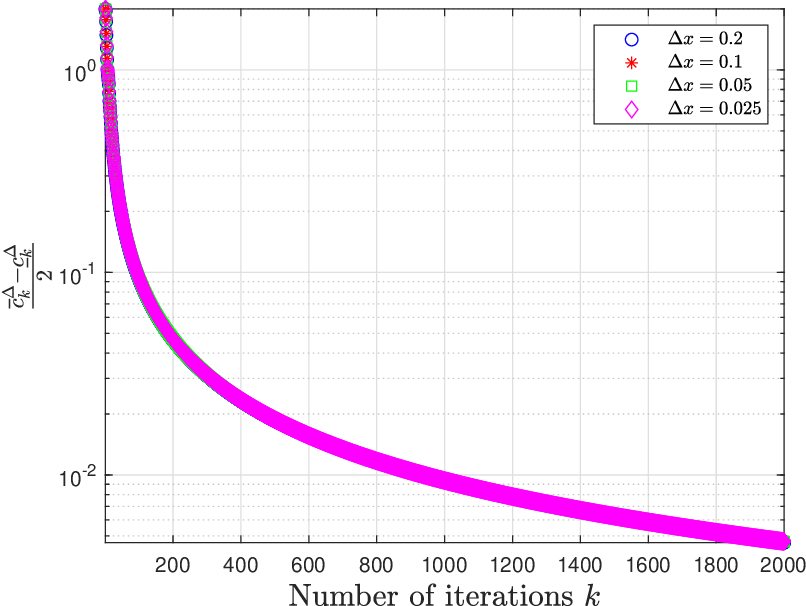}
        }
        \hspace{5mm}
        \subfigure
        {
            \includegraphics[width=5cm]{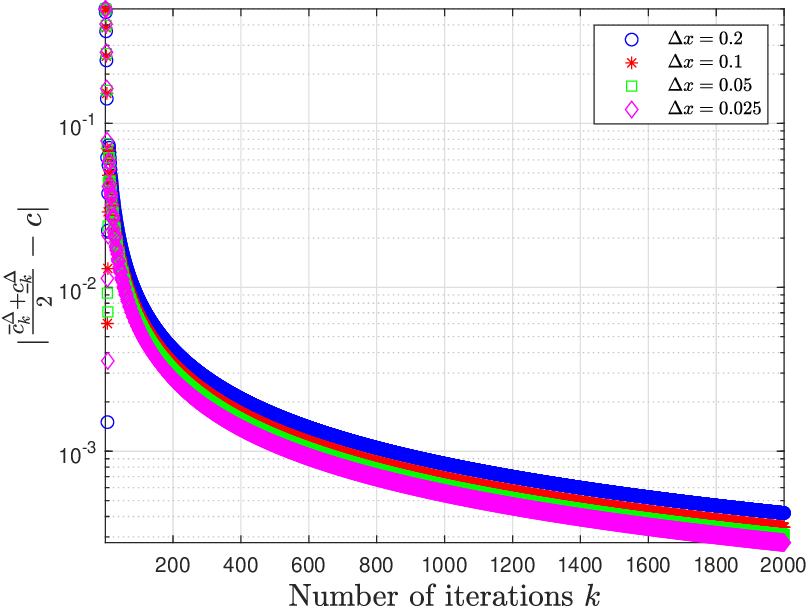}
        }
        \caption{\Cref{basicalgo}. Test on the traffic circle with Hamiltonian independent of $s$, $\Delta t=(\min_{\gamma\in\Ecal^+}\Delta_\gamma x)/7.5$ and $2000$ fixed iterations.}\label{fig:RotIndep}
    \end{figure}

    \begin{table}[!ht]
        \renewcommand\arraystretch{1.1}
        \centering
        \begin{tabular}{lllllll}
            \toprule
            ${\Delta x}$ & $k$ & $c^{\Delta}$ &$|c^{\Delta}-\hat c |$ \\
            \midrule
            $2.00\cdot 10^{-1}$ & $2000$& $-1.50$ &$4.20\cdot 10^{-4} $ \\
            $1.00\cdot 10^{-1}$ & $2000$ & $-1.50$&$3.44\cdot 10^{-4} $ \\
            $5.00\cdot 10^{-2}$ & $2000$ & $-1.50$&$3.10\cdot 10^{-4} $ \\
            $2.50\cdot 10^{-2}$& $2000$& $-1.50$ &$2.77\cdot 10^{-4} $ \\
            \bottomrule
        \end{tabular}
        \caption{\Cref{basicalgo}. Test on the traffic circle with Hamiltonian independent of $s$, $\Delta t=(\min_{\gamma\in\Ecal^+}\Delta_\gamma x)/7.5$ and $2000$ fixed iterations.}\label{table:RotIndepAlgo1k2000}
    \end{table}

    We conduct some tests with $\varepsilon = \Delta x / 10$ and compare the outcomes of \cref{basicalgo} and \cref{iteralgo}, which are presented in \cref{table:RotIndepAlgo1} and \cref{table:RotIndepAlgo2}, respectively. The tests are performed with three different values of $\Delta t$, given by $\Delta t = \Delta x^{5/6}$, $\Delta t = \Delta x / 2$ and $\Delta t = (\min_{\gamma \in \Ecal^+} \Delta_\gamma x)/7.5$. Numerical simulations show that, by implementing \cref{iteralgo} instead of \cref{basicalgo}, we achieve an average reduction in the number of iterations by: $66\%$ for $\Delta t = \Delta x^{5/6}$ (column 2 of \cref{table:RotIndepAlgo1} and \cref{table:RotIndepAlgo2}), $83 \%$ for $\Delta t = \Delta x / 2$ (column 5 of \cref{table:RotIndepAlgo1} and \cref{table:RotIndepAlgo2}) and $88 \%$ for $\Delta t = (\min_{\gamma \in \Ecal^+} \Delta_\gamma x)/7.5$ (column 8 of \cref{table:RotIndepAlgo1} and \cref{table:RotIndepAlgo2}).

    \begin{table}[!ht]
        \renewcommand\arraystretch{1.1}
        \centering
        \resizebox{\textwidth}{!}{
            \begin{tabular}{llllllllll}
                \toprule
                ${\Delta x}$ & $k$ & $c^{\Delta}$ &$|c^{\Delta}-\hat c |$ & $k$ & $c^{\Delta}$ & $|c^{\Delta}-\hat c |$ & $k$ & $c^{\Delta}$& $|c^{\Delta}-\hat c |$ \\
                \midrule
                $2.00\cdot 10^{-1}$ & $244$ & $-1.51$ &$1.23\cdot 10^{-2} $ & $232$& $-1.50$ &$2.97\cdot 10^{-3} $ & $232$ & $-1.50$ &$3.62\cdot 10^{-3} $ \\
                $1.00\cdot 10^{-1}$ & $479$ & $-1.51$&$5.73\cdot 10^{-3} $ & $465$ & $-1.50$&$1.26\cdot 10^{-3} $ & $465$ & $-1.50$ &$1.48\cdot 10^{-3} $ \\
                $5.00\cdot 10^{-2}$ & $952$ & $-1.50$&$2.84\cdot 10^{-3} $ & $938$ & $-1.50$&$5.94\cdot 10^{-4} $ & $938$ & $-1.50$ &$6.61\cdot 10^{-4} $ \\
                $2.50\cdot 10^{-2}$ & $1894$ & $-1.50$&$9.82\cdot 10^{-4} $ & $1878$& $-1.50$ &$2.75\cdot 10^{-4} $ & $1878$ & $-1.50$ &$2.95\cdot 10^{-4} $ \\
                $1.25\cdot 10^{-2}$ & $3774$ & $-1.50$&$4.29\cdot 10^{-4} $ & $3756$& $-1.50$ &$1.30\cdot 10^{-4} $ & $3756$ & $-1.50$ &$1.36\cdot 10^{-4} $ \\
                \bottomrule
            \end{tabular}
        }
        \caption{\Cref{basicalgo}. Test on the traffic circle with Hamiltonian independent of $s$ and $\varepsilon=\Delta x/10$. Columns 2--4 refer to $\Delta t =\Delta x^{5/6}$, columns 5--7 refer to $\Delta t=\Delta x/2$ and columns 8--10 refer to $\Delta t=(\min_{\gamma\in\Ecal^+}\Delta_\gamma x)/7.5$.}\label{table:RotIndepAlgo1}
    \end{table}

    \begin{table}[!ht]
        \renewcommand\arraystretch{1.1}
        \centering
        \resizebox{\textwidth}{!}{
            \begin{tabular}{llllllllll}
                \toprule
                ${\Delta x}$ & $k$ & $c^{\Delta}$ &$|c^{\Delta}-\hat c |$ & $k$ & $c^{\Delta}$ & $|c^{\Delta}-\hat c |$ & $k$ & $c^{\Delta}$& $|c^{\Delta}-\hat c |$ \\
                \midrule
                $2.00\cdot 10^{-1}$ & $76$ & $-1.51$ &$9.31\cdot 10^{-3} $ & $40$& $-1.50$ & $7.33\cdot 10^{-4} $ & $29$ & $-1.50$ &$6.66\cdot 10^{-4} $ \\
                $1.00\cdot 10^{-1}$ & $126$ & $-1.50$ &$4.36\cdot 10^{-3} $ & $77$ & $-1.50$ & $2.19\cdot 10^{-4} $ & $55$ & $-1.50$&$3.20\cdot 10^{-5} $ \\
                $5.00\cdot 10^{-2}$ & $271$ & $-1.50$ &$2.06\cdot 10^{-3} $ & $153$ & $-1.50$ &$4.24\cdot 10^{-5} $ & $109$ & $-1.50$&$8.55\cdot 10^{-6} $ \\
                $2.50\cdot 10^{-2}$& $717$ & $-1.50$ &$6.91\cdot 10^{-4} $ & $306$& $-1.50$ & $1.21\cdot 10^{-5} $ & $218$ & $-1.50$&$1.19\cdot 10^{-5} $ \\
                $1.25\cdot 10^{-2}$& $1656$ & $-1.50$ &$2.18\cdot 10^{-4} $ & $616$& $-1.50$ & $2.94\cdot 10^{-6} $ & $437$ & $-1.50$&$5.15\cdot 10^{-7} $\\
                \bottomrule
            \end{tabular}
        }
        \caption{\Cref{iteralgo}. Test on the traffic circle with Hamiltonian independent of $s$ and $\varepsilon=\Delta x/10$. Columns 2--4 refer to $\Delta t =\Delta x^{5/6}$, columns 5--7 refer to $\Delta t=\Delta x/2$ and columns 8--10 refer to $\Delta t=(\min_{\gamma\in\Ecal^+}\Delta_\gamma x)/7.5$.}\label{table:RotIndepAlgo2}
    \end{table}

    \appendix

    \section{Proof of Theorem~\ref{ltb}}\label{ltbexp}

    \Cref{ltb} has been proved in~\cite[Theorem 5.2]{Pozza25_2} under the additional assumption
    \begin{enumerate}[label=\textbf{(D)},format=\normalfont]
        \item for any $\gamma\in\Ecal$ with $a_\gamma=c=a_0$ the map $s\mapsto\min\limits_{p\in\Rds}H_\gamma(s,p)$ is constant in $[0,|\gamma|]$.
    \end{enumerate}
    This assumption relies on knowing the value of $c$ in advance, which is clearly too restrictive for our purposes. Under this condition the extended Aubry set $\wtAcal_\Gamma$ consists of the support of a collection of arcs, while in our case it could even be made up of a countable number of disjointed pieces of arcs or just a single point. However, the proof of~\cite[Theorem 5.2]{Pozza25_2} can be readily extended to our case using the same methods, with mostly straightforward modifications. Thus, we will just sketch the proof of \cref{ltb}, highlighting the main steps.

    First, the same arguments used in the proof of~\cite[Proposition 5.7]{Pozza25_2} prove the following result.

    \begin{prop}\label{weakltb}
        Let $w$ be a subsolution to \hrefc{}, then $(x,t)\mapsto w(x)-ct$ is a subsolution to~\eqref{eq:globteik}. If $w$ is also a solution to \hrefc{} in $\Gamma\setminus\mleft(\wtAcal_\Gamma\setminus\Acal_\Gamma\mright)$, then $w-ct=\Scal(t)w$ on $\Gamma\times\Rds^+$.
    \end{prop}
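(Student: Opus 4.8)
The plan is to prove the two assertions separately, the first straight from the pointwise definition of viscosity subsolution on $\Gamma$ and the second from the Lagrangian representation~\eqref{eq:vft} of $\Scal(t)$, following the lines of~\cite[Proposition~5.7]{Pozza25_2}. For the first assertion, put $v(x,t)\coloneqq w(x)-ct$. Since $\partial_t v\equiv-c$ while the space derivatives of $v$ coincide with those of $w$, on the relative interior of each arc $\gamma$ the viscosity inequality $\partial_tv+H_\gamma(s,\partial_sv)\le0$ reduces to $H_\gamma(s,\partial_s(w\circ\gamma))\le c$, which holds because $w\circ\gamma$ is a subsolution of $H_\gamma(s,\partial U)=c$. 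At a vertex $x$ the flux-limited notion of subsolution to~\eqref{eq:globteik} additionally bounds the time derivative above by $-c_x$; this is satisfied since $\partial_tv\equiv-c$ and, by~\eqref{eq:critvaldef}, $c\ge a_0=\max_{\gamma\in\Ecal}a_\gamma\ge\max_{\gamma\in\Gamma_x}a_\gamma=c_x$. Hence $v$ is a subsolution of~\eqref{eq:globteik}; note this step uses only that $w$ is a subsolution of~\hrefc{}.

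For the second assertion, one inequality comes for free: $v=w-ct$ is a subsolution of~\eqref{eq:globteik} with $v(\cdot,0)=w$, and $\Scal(t)w$ is the solution with the same initial datum, so the comparison principle for~\eqref{eq:globteik} gives $w(x)-ct\le(\Scal(t)w)(x)$ on $\Gamma\times\Rds^+$. For the opposite inequality, by~\eqref{eq:vft} it is enough to exhibit, for each $x\in\Gamma$ and $t>0$, a curve $\xi\colon[0,t]\to\Gamma$ with $\xi(t)=x$ and $w(\xi(0))+\int_0^tL(\xi,\dot\xi)\,\drm\tau\le w(x)-ct$, equivalently $w(x)-w(\xi(0))\ge\int_0^t\bigl(L(\xi,\dot\xi)+c\bigr)\,\drm\tau$. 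I would combine two facts. First, comparing the definitions of $\sigma_c$ and of $L$ (a Legendre transform) yields $\sigma_c(y,q)\le L(y,q)+c$ on $T\Gamma$ — at a vertex with $q=0$ this is exactly $0\le c-c_x$, noted above — so integrating along any curve $\eta$ from $y$ to $x$ and minimizing over connecting curves gives $S_c(y,x)\le\int_0^t(L(\eta,\dot\eta)+c)\,\drm\tau$; and since $w$ is a subsolution of~\hrefc{}, \cref{maxsubsol} with $\Gamma'=\Gamma$ gives $w(x)-w(y)\le S_c(y,x)$. Hence $w(x)-w(\xi(0))\le\int_0^t(L(\xi,\dot\xi)+c)\,\drm\tau$ for \emph{every} such curve, so it suffices to find one attaining equality, i.e.\ a curve calibrated for $w$ at the level $c$. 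Second, such curves exist under the standing hypothesis: from any point where $w$ solves~\hrefc{} a backward calibrated curve emanates by the weak KAM machinery behind \cref{maxsubsol}; whenever it meets a vertex $x_0\in\wtAcal_\Gamma\setminus\Acal_\Gamma$ — the only points where $w$ may fail to solve~\hrefc{} — one prolongs it by letting it rest at $x_0$, a move calibrated for $w$ because $L(x_0,0)+c=-c_{x_0}+c=0$ there, as $c_{x_0}=c$. Concatenating and reparametrizing, one obtains for every $t>0$ a curve of length $t$ calibrated for $w$ and ending at $x$; feeding it into~\eqref{eq:vft} gives $(\Scal(t)w)(x)\le w(x)-ct$, and together with the comparison bound this yields $w-ct=\Scal(t)w$ on $\Gamma\times\Rds^+$.

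The main obstacle is this last construction: producing, through an arbitrary point of $\Gamma$ and over an arbitrarily long time horizon, a curve calibrated for $w$, while controlling its passage through the vertices of $\wtAcal_\Gamma\setminus\Acal_\Gamma$, where $w$ is known only to be a subsolution. This is precisely where the ``rest at a flux-limited vertex'' device — available exactly because $c_{x_0}=c$ there, so that $L(x_0,0)+c=0$ — is needed, and it is the only substantive point at which the argument diverges from, and genuinely extends, that of~\cite[Proposition~5.7]{Pozza25_2}.
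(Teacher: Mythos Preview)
Your proposal is correct and follows precisely the approach the paper invokes: the paper gives no argument of its own here beyond pointing to~\cite[Proposition~5.7]{Pozza25_2}, and your sketch reproduces that line of reasoning---comparison for one inequality, construction of a backward calibrated curve for the other---while correctly isolating the only additional device needed once assumption~(D) is dropped, namely that at a vertex $x_0\in\wtAcal_\Gamma\setminus\Acal_\Gamma$ one has $c_{x_0}=c$ and hence $L(x_0,0)+c=0$, so resting there is calibrated.
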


    Setting $u$ as in~\cref{eq:ltb.1} and
    \begin{equation*}
        w(x)\coloneqq\min\limits_{y\in\Gamma}(\phi(y)+S_c(y,x)),\qquad \text{for }x\in\Gamma,
    \end{equation*}
    we get, according to the comparison principle~\cite[Theorem 7.1]{Siconolfi22}, \cref{weakltb,Scalprop,maxsubsol}, that
    \begin{equation}\label{eq:ltb1}
        \begin{aligned}
            w(x)&\le(\Scal(t)w)(x)+ct\le u(x),&& \text{for any $(x,t)\in\Gamma\times\Rds^+$},\\
            w(x)&=(\Scal(t)w)(x)+ct=u(x),&& \text{for any $(x,t)\in\wtAcal_\Gamma\times\Rds^+$}.
        \end{aligned}
    \end{equation}

    Let us define the semilimits
    \begin{align}
        \unphi(x)\coloneqq\;&\sup\mleft\{\limsup_{n\to\infty}(\Scal(t_n)\phi)(x_n)+ct_n\mright\},\label{eq:limsuptsol1}\\
        \ovphi(x)\coloneqq\;&\inf\mleft\{\liminf_{n\to\infty}(\Scal(t_n)\phi)(x_n)+ct_n\mright\}\label{eq:limsuptsol2},
    \end{align}
    where the supremum and the infimum are taken over the sequences $\{x_n\}_{n\in\Nds}$ converging to $x$ and the positive diverging sequences $\{t_n\}_{n\in\Nds}$. Arguing as in~\cite[Proposition 5.13]{Pozza25_2} we get

    \begin{prop}\label{ovphisupsol}
        Given $\phi\in C(\Gamma)$, let $\unphi$ and $\ovphi$ be as defined in~\eqref{eq:limsuptsol1} and~\eqref{eq:limsuptsol2}, respectively. Then $\unphi$ and $\ovphi$ are a subsolution to \hrefc{} on $\Gamma$ and a supersolution to \hrefc{} on $\Gamma\setminus\wtAcal_\Gamma$, respectively.
    \end{prop}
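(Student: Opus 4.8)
The plan is to follow closely the proof of \cite[Proposition 5.13]{Pozza25_2}, so I only indicate the main steps and the (minor) points where our setting forces changes.

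\emph{Step 0: the semilimits are finite and semicontinuous.} First I would check that the family $\{\Scal(t)\phi+ct\}_{t>0}$ is equibounded on $\Gamma$, so that $\unphi$ and $\ovphi$ take values in $\Rds$; it is then standard that $\unphi$ is upper semicontinuous, $\ovphi$ is lower semicontinuous, and $\ovphi\le\unphi$ on $\Gamma$. Since $\phi\in C(\Gamma)$, fix a Lipschitz $\psi$ with $\|\psi-\phi\|_\infty\le1$; by the monotonicity and constant-shift properties in \cref{Scalprop} one gets $|\Scal(t)\phi-\Scal(t)\psi|\le1$ on $\Gamma\times\Rds^+$, while \cref{basictheo} applied to $\psi$ gives $|\psi(x)-(\Scal(t)\psi)(x)-ct|\le2\ell\diam(\Gamma)$ for all $(x,t)$, where $\ell$ is the Lipschitz constant of $(x,t)\mapsto(\Scal(t)\psi)(x)$. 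Hence $\Scal(t)\psi+ct$, and therefore $\Scal(t)\phi+ct$, is equibounded.

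\emph{Step 1: $\unphi$ is a subsolution to \hrefc{} on $\Gamma$.} By \cref{Scalprop} and its third property, each map $(x,t)\mapsto(\Scal(t)\phi)(x)+ct$ solves $\partial_t v+\Hcal(x,\Drm_x v)=c$ on $\Gamma\times(0,\infty)$; in particular it is a subsolution. Letting $t\to\infty$ and using the equiboundedness from Step 0, the upper relaxed limit $\unphi$, which does not depend on $t$, is a subsolution of the stationary equation $\Hcal(x,\Drm u)=c$: on the interior of each arc this is the classical stability of viscosity subsolutions under upper relaxed limits, and at each vertex $x$ one verifies the subsolution test-function condition for \hrefc{} directly from the junction condition satisfied by the time-dependent subsolutions, using that $c_x\le a_0\le c$ (see \eqref{eq:critvaldef}). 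This reproduces verbatim the first half of \cite[Proposition 5.13]{Pozza25_2}.

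\emph{Step 2: $\ovphi$ is a supersolution to \hrefc{} on $\Gamma\setminus\wtAcal_\Gamma$ --- the delicate step.} Fix $x\in\Gamma\setminus\wtAcal_\Gamma$ and a test function $\psi$ touching $\ovphi$ from below at $x$; choose $x_n\to x$ and $t_n\to\infty$ with $(\Scal(t_n)\phi)(x_n)+ct_n\to\ovphi(x)$ and, for each $n$, an optimal curve $\xi_n$ for $(\Scal(t_n)\phi)(x_n)$ as in \eqref{eq:vft}. For a small fixed $\delta>0$ and $n$ large, the semigroup property in \cref{Scalprop} yields $(\Scal(t_n)\phi)(x_n)=(\Scal(t_n-\delta)\phi)(\xi_n(t_n-\delta))+\int_{t_n-\delta}^{t_n}L(\xi_n,\dot\xi_n)\,\drm\tau$; combining this with $\liminf_{s\to\infty}\bigl((\Scal(s)\phi)(\cdot)+cs\bigr)\ge\ovphi\ge\psi$ and with $\psi(x)=\ovphi(x)$, one obtains $\int_{t_n-\delta}^{t_n}\bigl(\Drm\psi(\xi_n)\cdot\dot\xi_n-L(\xi_n,\dot\xi_n)-c\bigr)\,\drm\tau\ge-o(1)$, whence, by Fenchel duality, averaging over $[t_n-\delta,t_n]$ and letting first $n\to\infty$ and then $\delta\to0^+$, the desired inequality $H_\gamma(s,\partial\psi)\ge c$ (with the obvious modification along an incident arc if $x$ is a vertex). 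Two ingredients need the hypothesis $x\notin\wtAcal_\Gamma$ and a uniform control: over the short final interval $\xi_n$ stays on the arc through $x$ (or rests at the vertex $x$), which follows from a uniform Lipschitz bound on $\Scal(t)\phi$ for $t$ bounded away from $0$ --- obtained by regularizing $\phi$ and invoking \cref{liptsol} --- together with $x_n\to x$; and at a vertex one uses $c_x<c$ (which holds precisely because $x\notin\wtAcal_\Gamma$) to rule out that $\xi_n$ is eventually stationary there, since that would force $c_x\ge c$. The main obstacle is exactly this Step 2: localizing the optimal trajectories near the vertices and carrying out the dynamic-programming/test-function argument uniformly in $n$, together with the junction analysis at vertices with $c_x<c$. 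The only genuinely new feature with respect to \cite{Pozza25_2} is that $\wtAcal_\Gamma$ may now be a countable union of subarcs, or even a single point, rather than a union of whole arcs; since the supersolution condition is tested pointwise, this causes no difficulty, and the argument of \cite[Proposition 5.13]{Pozza25_2} goes through with only notational changes.
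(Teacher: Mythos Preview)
Your proposal is correct and takes essentially the same approach as the paper: both defer to \cite[Proposition~5.13]{Pozza25_2}, and your sketch accurately outlines the stability argument for the subsolution part and the dynamic-programming/optimal-curve argument for the supersolution part, including the role of $c_x<c$ at vertices outside $\wtAcal_\Gamma$. The only minor remark is that your Step~0 routes equiboundedness through \cref{basictheo}, whereas in the appendix the more direct route is via \cref{weakltb} and \cref{Scalprop} (sandwich $\Scal(t)\phi+ct$ between $u+\min_\Gamma(\phi-u)$ and $u+\max_\Gamma(\phi-u)$ for any critical solution $u$); since \cref{basictheo} itself rests on \cref{weakltb}, this is a harmless detour and not a logical issue.
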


    Defining the semilimits $\unw$ and $\ovw$ as in~\cref{eq:limsuptsol1,eq:limsuptsol2} with $\phi$ replaced by $w$, respectively, we get from~\cref{eq:ltb1}
    \begin{align*}
        \ovw(x)&\le\unw(x)\le u(x),\qquad \text{for any $x\in\Gamma$},\\
        \ovw(x)&=\unw(x)=u(x),\qquad \text{for any $x\in\wtAcal_\Gamma$},
    \end{align*}
    then \cref{ovphisupsol} and the comparison principle for the eikonal equation~\cite[Theorem 5.3]{Pozza25} yield that $\ovw=\unw=u$ on $\Gamma$, which in turn shows
    \begin{equation}\label{eq:ltb2}
        \lim_{t\to\infty}(\Scal(t)w)(x)+ct=u(x),\qquad \text{for every $x\in\Gamma$}.
    \end{equation}

    Next we have by \cref{Scalprop} that
    \begin{equation*}
        (\Scal(t)w)(x)\le(\Scal(t)\phi)(x),\qquad \text{for all $(x,t)\in\Gamma\times\Rds^+$},
    \end{equation*}
    therefore, taking into account~\cref{eq:ltb2},
    \begin{equation*}
        u(x)\le\ovphi(x)\le\unphi(x),\qquad \text{for every $x\in\Gamma$}.
    \end{equation*}
    Arguing as in~\cite[Lemmas 5.16 and 5.17]{Pozza25} we get that $\unphi=u$ on $\wtAcal_\Gamma$, thus another application of~\cite[Theorem 5.3]{Pozza25} proves \cref{ltb}.

    \printbibliography[heading=bibintoc]

\end{document}